\newenvironment{proof}{\par \noindent{\bf Proof: }}{\hspace{\stretch{1}} $\Box$ \par \mbox{}}
\newcommand{\noproof}{\hspace{\stretch{1}} $\Box$}
\newtheorem{theorem}{Theorem}[section]
\newtheorem{proposition}[theorem]{Proposition}
\newtheorem{lemma}[theorem]{Lemma}
\newtheorem{corollary}[theorem]{Corollary}
{\theorembodyfont{\rmfamily}
\newtheorem{definition}[theorem]{Definition}

}
\newenvironment{theorem*}{\par \medskip \noindent{\bf Theorem }}{\par \mbox{}}
\newenvironment{lemma*}{\par \medskip \noindent{\bf Theorem }}{\par \mbox{}}
\newcommand{\Hom}{\mathop{Hom}}
\newcommand{\Ob}{\mathop{Ob}}
\newcommand{\im}{\mathop{im}}
\newcommand{\F}{\mathbb{F}}
\newcommand{\KK}{{\mathbb K}{\mathbb K}}
\newcommand{\N}{\mathbb {N}}
\title{$KK$-theory spectra for $C^\ast$-categories and discrete groupoid $C^\ast$-algebras}
\author{Paul D. Mitchener}
\begin{document}

\maketitle

\tableofcontents

\subsection*{Abstract}

In this paper we refine a version of bivariant $K$-theory developed
by Cuntz to define symmetric spectra representing the $KK$-theory of
$C^\ast$-categories and discrete groupoid $C^\ast$-algebras.  In
both cases, the Kasparov product can be expressed as a smash product
of spectra.

\section{Introduction}

In \cite{Cu3}, J.Cuntz developed $KK$-theory for locally convex
algebras in order to look at versions of the Chern character for
bivariant theories.  This approach, using the thesis of A.B.Thom,
was simplified in \cite{Cu4}.  In an unpublished preprint, M.Joachim
and S.Stolz have used Cuntz's approach to $KK$-theory to define
symmetric $KK$-theory spectra for $C^\ast$-algebras.

The purpose of this article is to go in a slightly different
direction with the $KK$-theory machinery by looking at the
$KK$-theory of $C^\ast$-categories and of discrete groupoid
$C^\ast$-algebras.  In both of these cases, the theory can naturally
be expressed in terms of symmetric spectra, and the Kasparov product
can be realised at the level of spectra.

Thus, for $C^\ast$-categories $\mathcal A$ and $\mathcal B$ (or as a
special case $C^\ast$-algebras), we have a symmetric spectrum $\KK
({\mathcal A},{\mathcal B})$ representing $KK$-theory.  If we are
working over the complex numbers, this spectrum is a symmetric $\KK
({\mathbb C} , {\mathbb C})$-module spectrum.  Over the real
numbers, we have a $\KK ({\mathbb R} , {\mathbb R} )$-module
spectrum.  In the special case that ${\mathcal A} = {\mathcal B}$,
the spectrum $\KK ({\mathcal A},{\mathcal A})$ is a symmetric ring
spectrum.

Similar results hold in the equivariant case.  To be precise, if
$\mathcal G$ is a discrete groupoid (or as a special case, a
discrete group), and $A$ and $B$ are $\mathcal G$-$C^\ast$-algebras,
then we have a symmetric spectrum $\KK_{\mathcal G}(A,B)$
representing equivariant $KK$-theory.  This spectrum is a symmetric
$\KK_{\mathcal G}({\mathbb C} , {\mathbb C})$-module spectrum in the
complex case, and a symmetric $\KK_{\mathcal G}({\mathbb R}
,{\mathbb R} )$-module spectrum in the real case.  The spectrum
$\KK_{\mathcal G}(A,A)$ is a symmetric ring spectrum.

There are a several potential applications of the new machinery. The
constructions in this article are both simpler and have more
structure than the $KK$-theory spectra constructed in
\cite{Mitch3,Mitch6}, where $KK$-theory spectra for
$C^\ast$-categories are developed in order to examine analytic
assembly maps.  The extra structure present should be useful when
homotopy-theoretic arguments involving $KK$-theory are applied, for
example (see \cite{Stolz1}) in the proof that the Baum-Connes
conjecture implies the stable Gromov-Lawson-Rosenberg conjecture.

\section{$C^\ast$-categories} \label{prelim}

Let $\F$ denote either the field of real numbers or the field of
complex numbers.  Recall (see for example \cite{Mi}) that a small
category $\mathcal A$ is called an {\em unital algebroid} (over the
field $\F$) if each morphism set $\Hom (a,b)_{\mathcal A}$ is a
vector space over the field $\F$ and composition of morphisms is
bilinear.

An {\em involution} on a unital algebroid $\mathcal A$ is a collection of maps
\[
\Hom (a,b)_{\mathcal A}\rightarrow \Hom (b,a)_{\mathcal A}
\]
written $x\mapsto x^\ast$ such that:

\begin{itemize}

\item $(\alpha x + \beta y)^\ast = \overline{\alpha}x^\ast +
\overline{\beta}y^\ast$ for all scalars $\alpha ,\beta \in \F$ and
morphisms $x,y \in \Hom (a,b)_{\mathcal A}$.

\item $(xy)^\ast = y^\ast x^\ast$ for all composable morphisms $x$
and $y$.

\item $(x^\ast )^\ast =x$ for every morphism $x$.

\end{itemize}

Given unital algebroids with involution, $\mathcal A$ and $\mathcal
B$, we call a functor $F\colon {\mathcal A}\rightarrow {\mathcal B}$
a {\em $\ast$-functor} if each map $F\colon \Hom (a,b)_{\mathcal
A}\rightarrow \Hom (F(a),F(b))_{\mathcal B}$ is linear, and
$F(x^\ast ) = F(x)^\ast$ for each morphism $x$ in the category
$\mathcal A$.

A {\em non-unital} algebroid with involution is a collection of
objects, morphisms, and maps similar to the above, except that there
need not exist identity morphisms $1\in \Hom (a,a)_{\mathcal A}$.
Thus, a non-unital algebroid with involution is no longer a
category, but rather a slightly more general object which could be
termed a {\em non-unital category}.    We can similarly define
$\ast$-functors between non-unital algebroids with involution.

In general, when we talk about $\ast$-algebroids and $\ast$-functors, we need to allow the possibility that they may be non-unital.

\begin{definition}
Let $\mathcal A$ be an algebroid with involution.  Then we call
$\mathcal A$ a {\em pre-$C^\ast$-category} if each morphism set is a
normed vector space and the following three axioms hold:

\begin{itemize}

\item Let $x$ and $y$ be composable morphisms in $\mathcal A$.  Then $\| xy \| \leq \| x\| \| y\|$.

\item Let $x\in \Hom (a,b)_{\mathcal A}$.  Then the product $x^\ast
x$ is a positive element of the algebra $\Hom (a,a)_{\mathcal
A}$.\footnote{That is to say the spectrum is a subset of the
positive real numbers.}

\item The {\em $C^\ast$-identity} $\| x^\ast x \| = \| x\|^2$ holds
for any morphism, $x$, in the category $\mathcal A$.

\end{itemize}

A collection of norms on the morphism sets of an algebroid with
involution that turns it into a pre-$C^\ast$-category is called a
{\em $C^\ast$-norm}.  There is a corresponding definition of a {\em
$C^\ast$-seminorm}.  A pre-$C^\ast$-category is called a {\em
$C^\ast$-category} if every morphism set is complete.
\end{definition}

In the above definition, a pre-$C^\ast$-category or
$C^\ast$-category could be non-unital (and thus no longer, strictly
speaking, a category).  Such $C^\ast$-categories are referred to in
\cite{Kan3} as {\em $C^\ast$-categeroids}; we will not use this
terminology here.

A $C^\ast$-algebra can be regarded as a $C^\ast$-category with only
one object. Conversely, $C^\ast$-categories and $\ast$-functors have
a number of useful properties similar to those of $C^\ast$-algebras
and $\ast$-homomorphisms.  For example, we have the following.

\begin{proposition} \label{AFprop}
Let $\mathcal A$ and $\mathcal B$ be $C^\ast$-categories.  Let
$F\colon {\mathcal A}\rightarrow {\mathcal B}$ be a $\ast$-functor.
Then:

\begin{itemize}

\item Each map $F\colon \Hom (a,b)_{\mathcal A}\rightarrow \Hom
(F(a),F(b))_{\mathcal B}$ is norm-decreasing.

\item Each map $F\colon \Hom (a,b)_{\mathcal A}\rightarrow \Hom
(F(a),F(b))_{\mathcal B}$ has closed image.

\item Suppose that the $\ast$-functor $F$ is faithful.  Then each
map  $F\colon \Hom (a,b)_{\mathcal A}\rightarrow \Hom
(F(a),F(b))_{\mathcal B}$ is an isometry.

\end{itemize}

\noproof
\end{proposition}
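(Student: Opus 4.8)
The plan is to bootstrap from the corresponding facts about $\ast$-homomorphisms of $C^\ast$-algebras, using the endomorphism algebras together with the $C^\ast$-identity as the transport mechanism. For each object $a$ the endomorphism set $\Hom(a,a)_{\mathcal A}$ is a $C^\ast$-algebra (unital when $\mathcal A$ is), and the restriction
\[
F\colon \Hom(a,a)_{\mathcal A} \to \Hom(F(a),F(a))_{\mathcal B}
\]
is a $\ast$-homomorphism, since $F$ is linear, multiplicative by functoriality, and $\ast$-preserving. I would therefore take as known the two standard $C^\ast$-algebra results: every $\ast$-homomorphism of $C^\ast$-algebras is norm-decreasing, and every injective $\ast$-homomorphism of $C^\ast$-algebras is isometric. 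Both persist in the non-unital case after passing to unitizations, so the possible absence of identity morphisms causes no trouble.

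For the first claim, given $x \in \Hom(a,b)_{\mathcal A}$ I would pass to $x^\ast x \in \Hom(a,a)_{\mathcal A}$ and compute
\[
\|F(x)\|^2 = \|F(x)^\ast F(x)\| = \|F(x^\ast x)\| \le \|x^\ast x\| = \|x\|^2,
\]
using the $C^\ast$-identity at the two outer ends and the contractivity of $F$ on $\Hom(a,a)_{\mathcal A}$ in the middle; hence $\|F(x)\| \le \|x\|$. The third claim is the identical computation with an equality replacing the inequality: if $F$ is faithful then its restriction to each $\Hom(a,a)_{\mathcal A}$ is an injective $\ast$-homomorphism of $C^\ast$-algebras, hence isometric, so that $\|F(x)\|^2 = \|F(x^\ast x)\| = \|x^\ast x\| = \|x\|^2$ and $F$ is isometric on every morphism space.

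I would deduce the second claim from the third by factoring $F$ through a quotient. The morphism-wise kernels $\{\,x : F(x)=0\,\}$ assemble into an ideal of $\mathcal A$, closed because $F$ is continuous by the first claim, and the quotient $\mathcal A/\ker F$ is again a $C^\ast$-category carrying an induced \emph{faithful} $\ast$-functor $\overline F\colon \mathcal A/\ker F \to \mathcal B$. By the third claim $\overline F$ is isometric, so it maps each morphism space $\Hom(a,b)_{\mathcal A/\ker F}$ — which is complete, being a quotient of a complete space by a closed subspace — isometrically onto its image. Since $F(\Hom(a,b)_{\mathcal A}) = \overline F(\Hom(a,b)_{\mathcal A/\ker F})$ is then a complete subspace of $\Hom(F(a),F(b))_{\mathcal B}$, it is closed.

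The main obstacle is the single genuinely non-formal ingredient: verifying that $\mathcal A/\ker F$ really is a $C^\ast$-category, that is, that the quotient seminorm on each morphism space is again a $C^\ast$-norm. This is the categorical analogue of the theorem that a quotient of a $C^\ast$-algebra by a closed two-sided ideal is a $C^\ast$-algebra. I would either invoke it directly in the $C^\ast$-category setting or reduce to the algebra case by passing, for a fixed pair of objects $a,b$, to the matrix (linking) $C^\ast$-algebra assembled from $\Hom(a,a)$, $\Hom(a,b)$, $\Hom(b,a)$, and $\Hom(b,b)$, on which $F$ induces an honest $\ast$-homomorphism; this reduces both the quotient statement and the closed-image conclusion to their $C^\ast$-algebra counterparts, with the grading by objects used to extract the relevant corner.
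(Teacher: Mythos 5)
Your proof is correct. Note that the paper itself gives this proposition without proof (it is marked as such and deferred to the $C^\ast$-algebra analogy and the references \cite{GLR}, \cite{Mitch2}), so there is no argument in the text to compare against; what you have written is the standard way to establish these facts. The reduction to endomorphism algebras via the $C^\ast$-identity handles contractivity and the isometry statement cleanly, including in the non-unital case, and the closed-image claim is correctly obtained by factoring through the quotient by the (closed, involutive) kernel ideal and applying the isometry statement to the induced faithful functor. You rightly isolate the one substantive input, namely that the quotient of a $C^\ast$-category by a closed ideal is again a $C^\ast$-category; this is proved in \cite{Mitch2}, and your alternative reduction via the linking algebra is also available entirely within the paper's own framework, since for fixed objects $a,b$ the algebra you describe is exactly $\Hom (a\oplus b , a\oplus b)_{{\mathcal A}_\oplus}$ from the additive completion, on which $F$ induces an honest $\ast$-homomorphism whose image is closed by the $C^\ast$-algebra theorem; one then recovers the $\Hom (a,b)$ statement by intersecting the (closed) image with the relevant corner, which is legitimate because the corners of the linking algebra are in direct sum and $F$ respects the grading.
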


Perhaps the most important elementary result on $C^\ast$-categories
is the following; see \cite{GLR} or \cite{Mitch2} for further
details.

\begin{theorem} \label{GNS}
Let $\mathcal A$ be a $C^\ast$-category.  Let $\mathcal L$ be the
$C^\ast$-category of all Hilbert spaces and bounded linear
maps.\footnote{Strictly speaking, the category $\mathcal L$ is not a
$C^\ast$-category since it is not small.  This problem does not
matter to us since we will not be doing constructions directly
involving the category $\mathcal L$; we can always pick a small full
subcategory.}  Then there exists a faithful $\ast$-functor $\rho
\colon {\mathcal A}\rightarrow {\mathcal L}$. \noproof
\end{theorem}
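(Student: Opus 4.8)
The plan is to adapt the classical Gelfand--Naimark--Segal construction to the multi-object setting. Recall that for each object $a$ of $\mathcal{A}$ the morphism set $\Hom(a,a)_{\mathcal A}$ is a $C^\ast$-algebra; I would begin by fixing an object $a$ together with a state $\phi$ on $\Hom(a,a)_{\mathcal A}$, and use the pair $(a,\phi)$ to manufacture a single $\ast$-functor $\rho_{a,\phi}\colon \mathcal{A}\rightarrow \mathcal{L}$. The direct sum of these functors over all such pairs will then be the desired faithful representation.

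To build $\rho_{a,\phi}$, for each object $b$ I would equip the vector space $\Hom(a,b)_{\mathcal A}$ with the sesquilinear form $\langle x,y\rangle = \phi(y^\ast x)$, noting that $y^\ast x\in \Hom(a,a)_{\mathcal A}$ so the expression makes sense, and that positivity of $\phi$ together with positivity of $x^\ast x$ makes this form positive semi-definite. Quotienting by the null space and completing yields a Hilbert space $H_b$; I set $\rho_{a,\phi}(b)=H_b$. A morphism $z\in \Hom(b,c)_{\mathcal A}$ then acts by left multiplication, $[x]\mapsto [zx]$, and the inequality $z^\ast z \leq \|z\|^2$ (in the unitization of $\Hom(b,b)_{\mathcal A}$) gives $\phi(x^\ast z^\ast z x)\leq \|z\|^2 \phi(x^\ast x)$, so this operator is well-defined and bounded with norm at most $\|z\|$. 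Verifying that $z\mapsto \rho_{a,\phi}(z)$ is linear, multiplicative, and $\ast$-preserving is then routine.

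To obtain faithfulness I would form the Hilbert-space direct sum of all these representations over the set of pairs $(a,\phi)$; since $\|\rho_{a,\phi}(z)\|\leq \|z\|$ holds uniformly, the direct sum $\rho$ is a well-defined $\ast$-functor into $\mathcal{L}$. Given a nonzero morphism $x\in \Hom(a,b)_{\mathcal A}$, the element $x^\ast x$ is a nonzero positive element of $\Hom(a,a)_{\mathcal A}$, so there is a state $\phi$ with $\phi(x^\ast x)>0$; evaluating $\rho_{a,\phi}(x)$ on the class of the identity (or of an approximate unit) shows $\rho_{a,\phi}(x)\neq 0$, hence $\rho(x)\neq 0$. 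By Proposition \ref{AFprop}, once $\rho$ is faithful each of its component maps automatically becomes an isometry.

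The main obstacle I anticipate is the bookkeeping in the non-unital case: when $\Hom(a,a)_{\mathcal A}$ has no identity one must pass to the unitization and to approximate units, both to justify the inequality $z^\ast z\leq \|z\|^2$ and to run the cyclic-vector argument establishing $\rho_{a,\phi}(x)\neq 0$. The set-theoretic worry that $\mathcal{L}$ is not small is, as the footnote indicates, harmless here, since smallness of $\mathcal{A}$ keeps the index set of pairs $(a,\phi)$ a genuine set, and we may restrict to a small full subcategory of $\mathcal{L}$ containing the image of $\rho$.
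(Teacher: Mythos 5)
The paper gives no proof of this theorem, deferring to \cite{GLR} and \cite{Mitch2}, and your argument is precisely the GNS-type construction used there: for each object $a$ and state $\phi$ on $\Hom(a,a)_{\mathcal A}$, complete each $\Hom(a,b)_{\mathcal A}$ under $\langle x,y\rangle=\phi(y^\ast x)$, let morphisms act by left multiplication, and take the direct sum over all pairs $(a,\phi)$, with faithfulness coming from choosing $\phi$ with $\phi(x^\ast x)=\|x\|^2$ and testing against an approximate unit. The details you flag (conjugation-invariance of the order to get $x^\ast z^\ast zx\leq\|z\|^2x^\ast x$, and the approximate-unit argument in the non-unital case) are exactly the points the cited references handle, so your proposal is correct and follows the standard route.
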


A $\ast$-functor $\rho \colon {\mathcal A}\rightarrow {\mathcal L}$
is called a {\em representation} of $\mathcal A$.  We write
${\mathcal L}(H_1,H_2)$ to denote the Banach space of all bounded
linear maps from a Hilbert space $H_1$ to a Hilbert space $H_2$.

Although most of the time, we will be looking at $\ast$-functors between $C^\ast$-categories, occasionally we need a slightly weaker notion.

\begin{definition}
Let $\mathcal A$ and $\mathcal B$ be $C^\ast$-categories.  Then a
{\em completely positive map} $q\colon {\mathcal A}\rightarrow
{\mathcal B}$ consists of a map $q\colon \Ob ({\mathcal
A})\rightarrow \Ob ({\mathcal B})$ along with a collection of linear
maps $q\colon \Hom (a,b)_{\mathcal A}\rightarrow \Hom
(q(a),q(b))_{\mathcal B}$ such that the sum
$$\sum_{i,j=1}^n y_i^\ast q (x_i^\ast x_j )y_j$$
is a positive element of the $C^\ast$-algebra $\Hom
(q(a),q(a))_{\mathcal B}$ for all morphisms $x_i \in \Hom
(b,c)_{\mathcal A}$ and $y_j\in \Hom (q(a),q(b))_{\mathcal B}$.
\end{definition}

Any $\ast$-functor is clearly a completely positive map.  However,
in general, a completely positive map is not even a functor.

We have a version of Stinespring's theorem (see for example chapter
3 of \cite{HR1}) in the world of $C^\ast$-categories.

\begin{theorem}
Let $\mathcal A$ be a unital $C^\ast$-category, and for each object
$a\in \Ob ({\mathcal A})$, let $H_a$ be an associated Hilbert space.

Then a set of unit-preserving linear maps $q\colon \Hom
(a,b)_{\mathcal A}\rightarrow {\mathcal L}(H_a,H_b)$ is completely
positive if and only if we have:

\begin{itemize}

\item A unital representation $\rho \colon {\mathcal A}\rightarrow
{\mathcal L}$.

\item A Hilbert space isometry $V_a \colon H_a\rightarrow \rho (a)$
for each object $a\in \Ob ({\mathcal A})$.

\end{itemize}

such that $q(x) = V_b^\ast \rho (x) V_a$ for all $x\in \Hom
(a,b)_{\mathcal A}$.
\end{theorem}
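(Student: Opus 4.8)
The plan is to prove the two implications separately, with essentially all of the work living in the construction of the dilation for the ``only if'' direction.

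For the easy implication, suppose we are given a unital representation $\rho$ together with isometries $V_a\colon H_a\to\rho(a)$ satisfying $q(x)=V_b^\ast\rho(x)V_a$. Taking morphisms $x_i\in\Hom(b_i,c)_{\mathcal A}$ with common target $c$ and operators $y_j\in{\mathcal L}(H_a,H_{b_j})$, I would substitute the formula and use that $\rho$ is a $\ast$-functor to rewrite
$$\sum_{i,j} y_i^\ast q(x_i^\ast x_j) y_j = \sum_{i,j} (\rho(x_i)V_{b_i}y_i)^\ast(\rho(x_j)V_{b_j}y_j) = T^\ast T,$$
where $T=\sum_j \rho(x_j)V_{b_j}y_j\colon H_a\to\rho(c)$. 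Being of the form $T^\ast T$, this sum is positive, so $q$ is completely positive. This direction is routine.

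For the converse I would run a Stinespring/GNS construction internal to the category. For each object $c$, form the algebraic direct sum $E_c=\bigoplus_{a\in\Ob({\mathcal A})}\Hom(a,c)_{\mathcal A}\otimes H_a$ and equip it with the sesquilinear form determined by
$$\langle \xi\otimes h,\ \eta\otimes k\rangle = \langle q(\eta^\ast\xi)h,\ k\rangle, \qquad \xi\in\Hom(a,c)_{\mathcal A},\ \eta\in\Hom(b,c)_{\mathcal A}.$$
The key point is that this form is positive semi-definite: for $v=\sum_p \xi_p\otimes h_p$ with $\xi_p\in\Hom(a_p,c)_{\mathcal A}$ one computes $\langle v,v\rangle=\sum_{p,q}\langle q(\xi_q^\ast\xi_p)h_p,h_q\rangle$, which is exactly the quantity that complete positivity (applied to morphisms sharing the target $c$) forces to be nonnegative. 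I would then quotient $E_c$ by the null space of the form and complete, obtaining a Hilbert space $\rho(c)$. I would define $\rho(x)$ for $x\in\Hom(c,d)_{\mathcal A}$ by $\rho(x)(\xi\otimes h)=(x\xi)\otimes h$, and the maps $V_a\colon H_a\to\rho(a)$ by $V_a(h)=[1_a\otimes h]$. Linearity, functoriality, and $\rho(1_c)=\mathrm{id}$ are immediate on representatives, while $\rho(x^\ast)=\rho(x)^\ast$ and the isometry property of $V_a$ (using that $q$ is unit-preserving, so $q(1_a)=\mathrm{id}_{H_a}$) follow by comparing inner products; the dilation identity $q(x)=V_b^\ast\rho(x)V_a$ then drops out of $\langle\rho(x)V_ah,\ V_bk\rangle=\langle q(1_b^\ast x)h,k\rangle=\langle q(x)h,k\rangle$.

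I expect the main obstacle to be the analytic well-definedness of $\rho$: each $\rho(x)$ must be shown bounded so that it descends to the quotient by the null space and extends to the completion. I would handle this as in the classical theorem, by reducing boundedness to positivity. For $\|x\|\le 1$, continuous functional calculus in the $C^\ast$-category gives $1_c-x^\ast x=z^\ast z$ with $z=(1_c-x^\ast x)^{1/2}\in\Hom(c,c)_{\mathcal A}$, whence
$$\langle v,v\rangle-\langle\rho(x)v,\rho(x)v\rangle = \sum_{p,q}\langle q\bigl((z\xi_q)^\ast(z\xi_p)\bigr)h_p,\ h_q\rangle \ge 0$$
by the very same positivity used to build the form. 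Thus the genuine content is establishing positivity of the form and re-using it for the norm estimate; the feature special to the categorical setting, and the reason the definition of complete positivity is phrased with a common target and varying source objects, is precisely that the morphisms $\xi_p$ entering $\langle v,v\rangle$ share only their target $c$ while their sources range over all of $\Ob({\mathcal A})$.
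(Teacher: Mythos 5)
Your proof is correct and follows essentially the same route as the paper's: the forward implication by exhibiting the sum as $T^\ast T$, and the converse by the GNS/Stinespring construction on $\bigoplus_{a}\Hom (a,c)_{\mathcal A}\odot H_a$ with $V_a(h)=[1_a\otimes h]$ and $\rho(x)[\xi\otimes h]=[x\xi\otimes h]$. The only substantive difference is that you also verify boundedness of $\rho(x)$ via $1_c-x^\ast x=z^\ast z$, a point the paper's proof leaves implicit.
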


\begin{proof}
Suppose we have a representation $\rho$ and isometries $V_a$ as
described above.  Let $x_i \in \Hom (b,c)_{\mathcal A}$ and $y_j \in
{\mathcal L}(H_a,H_b)$.  Then
$$\sum_{i,j=1}^n y_i^\ast q(x_i^\ast x_j)y_j = \left( \sum_{i=1}^n
x_i V_b y_i \right)^\ast \left( \sum_{i=1}^n x_i V_b y_i \right)$$
so the set of maps $q$ forms a completely positive map.

Conversely, let $q$ be completely positive.  On the algebraic tensor
product of vector spaces $\oplus_{a\in \Ob ({\mathcal A})} \Hom
(a,b)_{\mathcal A} \odot H_b$, define a sesquilinear form by the
formula
$$\langle \sum_{i=1}^m x_i \otimes v_i , \sum_{j=1}^n y_j \otimes
w_j \rangle = \sum_{i,j=1}^{m,n} \langle v_i , q(x_i^\ast y_j ) w_j
\rangle$$

Since $q$ is completely positive, the above sesquilinear form is
positive definite.  We can therefore take the quotient by the set of
tensors $\eta$ such that $\langle \eta ,\eta \rangle =0$ to form an
inner product space, and complete to form a Hilbert space
$\tilde{H}_a$.  Let us write $[x\otimes v]$ to denote the
equivalence class in this space of a tensor $x\otimes v$.

Define a representation $\rho \colon {\mathcal A}\rightarrow
{\mathcal L}$ by writing $\rho (a) = \tilde{H}_a$ for each object
$a\in \Ob ({\mathcal A})$, and
$$\rho (x)[y\otimes v] = [xy \otimes v]$$
where $x\in \Hom (b,c)_{\mathcal A}$, $y\in \Hom (a,b)_{\mathcal
A}$, and $v\in H_b$.

Define a Hilbert space isometry $V_b \colon H_b \rightarrow
\tilde{H}_b$ by the formula $V_b (v) = [1_b \otimes v]$.  Observe
that, for $x\in \Hom (a,b)_{\mathcal A}$ and $v\in H_a$, we have
$$\rho (x) V_a (v) =[x\otimes v]$$
and, for $w\in H_b$:
$$\langle w, q(x)v \rangle = \langle [x\otimes v],[1\otimes w]
\rangle$$

Hence $V_b^\ast [x\otimes v] = q(x)v$.  We see that
$$V_b^\ast \rho (x) V_a = q(x)$$
and we are done.
\end{proof}

\section{Short Exact Sequences and Tensor Products}

\begin{definition}
A sequence
\[
{\mathcal I}\stackrel{i}{\rightarrow} {\mathcal E} \stackrel{j}{\rightarrow} {\mathcal B}
\]
of $C^\ast$-categories and $\ast$-functors is termed a {\em short exact sequence} if:

\begin{itemize}

\item The categories $\mathcal I$, $\mathcal E$, and $\mathcal B$
have the same set of objects, and the functors $i$ and $j$ act as
the identity map on the set of objects.

\item Each sequence of vector spaces
\[
0\rightarrow \Hom (a,b)_{\mathcal I}\stackrel{i}{\rightarrow} \Hom
(a,b)_{\mathcal E}\stackrel{j}{\rightarrow} \Hom (a,b)_{\mathcal
B}\rightarrow 0
\]
is a short exact sequence.

\end{itemize}

\end{definition}

We will generally write
\[
0\rightarrow {\mathcal I}\stackrel{i}{\rightarrow} {\mathcal E} \stackrel{j}{\rightarrow} {\mathcal B} \rightarrow 0
\]
when we have a short exact seqence.  A short exact sequence is
termed {\em split exact} if it comes equipped with a $\ast$-functor
$s\colon {\mathcal B}\rightarrow {\mathcal E}$ such that $j\circ s
=1_{\mathcal B}$.  We term the short exact sequence {\em semi-split}
if there is a norm-decreasing completely positive map $q\colon
{\mathcal A}\rightarrow {\mathcal B}$ such that $q\circ s = 1_{\Hom
(a,b)_{\mathcal B}}$.

The above $\ast$-functor or completely positive map $s\colon
{\mathcal B}\rightarrow {\mathcal E}$ is called a {\em splitting} in
the first case, and a {\em completely positive splitting} in the
second case.

\begin{definition}
Let $\mathcal A$ be a $C^\ast$-category.  Then we define the
category ${\mathcal A}[0,1]$ to be the $C^\ast$-category with the
same set of objects as $\mathcal A$, where the morphism set $\Hom
(a,b)_{{\mathcal A}[0,1]}$ consists of all continuous functions
$f\colon [0,1] \rightarrow \Hom (a,b)_{\mathcal A}$. The norm on the
space $\Hom (a,b)_{{\mathcal A}[0,1]}$ is the supremum norm.

We define the {\em cone}, $C{\mathcal A}$, to be the subcategory
 of ${\mathcal A}[0,1]$ with the same set of objects, and morphism sets
\[
\Hom (a,b)_{C{\mathcal A}} = \{ f\in \Hom (a,b)_{{\mathcal A}[0,1]} \ |\ f(0)=0 \}
\]

The {\em suspension}, $\Sigma {\mathcal A}$ is the subcategory of
$C{\mathcal A}$ with the same set of objects, and morphism sets
\[
\Hom (a,b)_{\Sigma {\mathcal A}} = \{ f\in \Hom (a,b)_{C{\mathcal A}} \ |\ f(1)=0 \}
\]
\end{definition}

We have a canonical inclusion $\ast$-functor $i\colon \Sigma
{\mathcal A} \rightarrow C{\mathcal A}$.  There is also a
$\ast$-functor $j\colon C{\mathcal A}\rightarrow {\mathcal A}$,
defined to be the identity on the set of objects, and by the formula
$j(f) = f(1)$ for each morphism $f\in \Hom (a,b)_{C{\mathcal A}}$.
It is easy to check that we have a short exact sequence
\[
0 \rightarrow \Sigma {\mathcal A} \rightarrow C{\mathcal A}\rightarrow {\mathcal A}\rightarrow 0
\]

The above short exact sequence is semi-split; we have a completely
positive splitting $s\colon {\mathcal A}\rightarrow C{\mathcal A}$,
defined to be the identity the set of objects, and by the formula
\[
s (x)(t)= tx \qquad t\in [0,1],\ x\in \Hom (a,b)_{\mathcal A}
\]

Further, the above semi-split short exact sequence is natural in the
sense that the assignments ${\mathcal A}\mapsto C{\mathcal A}$ and
${\mathcal A}\mapsto \Sigma {\mathcal A}$ are $\ast$-functors
depending functorially on the $C^\ast$-category $\mathcal A$, and
the maps $i$, $j$, and $s$ are natural transformations. Given a
$\ast$-functor $\alpha \colon {\mathcal A}\rightarrow {\mathcal B}$,
we write $\Sigma \alpha \colon \Sigma {\mathcal A}\rightarrow \Sigma
{\mathcal B}$ to denote the induced $\ast$-functor.

\begin{definition}
Let $\mathcal A$ and $\mathcal B$ be $C^\ast$-categories.  Then we
define the {\em algebraic tensor product} ${\mathcal A}\odot
{\mathcal B}$ to be the category with the set of objects
\[
\Ob ({\mathcal A}\odot {\mathcal B}) = \{ a\otimes b \ |\ a\in \Ob
({\mathcal A}), b\in \Ob ({\mathcal B}) \}
\]
where the morphism set $\Hom (a\otimes b,a'\otimes b')_{{\mathcal
A}\odot {\mathcal B}}$ is a the algebraic tensor product of vector
spaces $\Hom (a,b)_{\mathcal A}\odot \Hom (a',b')_{\mathcal B}$.
\end{definition}

By theorem \ref{GNS}, we can find faithful $\ast$-functors
$\rho_{\mathcal A} \colon {\mathcal A}\rightarrow {\mathcal L}$ and
$\rho_{\mathcal B}\colon {\mathcal A}\rightarrow {\mathcal L}$.  By
considering the tensor product of Hilbert spaces, we obtain a
$\ast$-functor $\rho_{\mathcal A}\otimes \rho_{\mathcal B} \colon
{\mathcal A}\odot {\mathcal B}\rightarrow {\mathcal L}$.  We define
the {\em spatial tensor product}, ${\mathcal A}\otimes {\mathcal
B}$, of the $C^\ast$-categories $\mathcal A$ and $\mathcal B$ to be
the completion of the algebraic tensor product with respect to the
norm $\| u \| := \| \rho_{\mathcal A}\otimes \rho_{\mathcal B} (u)
\|$.

The spatial tensor product is well-defined, and does not depend on
the choice of faithful representation; for more details, see
\cite{Mitch2}.

For any $C^\ast$-category $\mathcal A$, the tensor product
$C[0,1]\otimes {\mathcal A}$ is naturally isomorphic to the category
${\mathcal A}[0,1]$.  The proof is the same as that of the
corresponding result for $C^\ast$-algebras\footnote{The proof works
for any sensible tensor product of $C^\ast$-algebras since the
$C^\ast$-algebra $C[0,1]$ is commutative and therefore nuclear.};
see for example appendix T of \cite{W-O}.  The following result for
cones and suspensions follows.

\begin{proposition} \label{cstp}
Let $\mathcal A$ and $\mathcal B$ be $C^\ast$-categories.  Then we
have natural isomorphisms
\[
C({\mathcal A}\otimes {\mathcal B})\cong {\mathcal A}\otimes
C{\mathcal B} \cong (C{\mathcal A})\otimes {\mathcal B}
\]
and
\[
\Sigma ({\mathcal A}\otimes {\mathcal B})\cong {\mathcal A}\otimes
\Sigma{\mathcal B} \cong (\Sigma {\mathcal A})\otimes {\mathcal B}
\]
\noproof
\end{proposition}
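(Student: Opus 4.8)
The plan is to reduce both chains of isomorphisms to the single-variable statements
\[
C\mathcal{A}\cong C\F\otimes\mathcal{A},\qquad \Sigma\mathcal{A}\cong\Sigma\F\otimes\mathcal{A},
\]
natural in $\mathcal{A}$, where $\F$ is the ground field regarded as a one-object $C^\ast$-category, so that $C\F$ is the cone $C_0((0,1])$ and $\Sigma\F$ the suspension $C_0((0,1))$. Granting these, the proposition follows by formal manipulation: applying the first identity with $\mathcal{A}\otimes\mathcal{B}$ in place of $\mathcal{A}$ and then using associativity and commutativity of the spatial tensor product (see \cite{Mitch2}) gives
\[
C(\mathcal{A}\otimes\mathcal{B})\cong C\F\otimes(\mathcal{A}\otimes\mathcal{B})\cong\mathcal{A}\otimes(C\F\otimes\mathcal{B})\cong\mathcal{A}\otimes C\mathcal{B},
\]
while grouping the factors as $(C\F\otimes\mathcal{A})\otimes\mathcal{B}$ yields $(C\mathcal{A})\otimes\mathcal{B}$. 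The suspension statement is identical with $\Sigma\F$ in place of $C\F$.

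To establish the single-variable identities I would work entirely inside the isomorphism $C[0,1]\otimes\mathcal{A}\cong\mathcal{A}[0,1]$ proved above. Since $C\F=C_0((0,1])$ is a closed subalgebra of $C[0,1]$, the spatial tensor product $C\F\otimes\mathcal{A}$ embeds isometrically into $C[0,1]\otimes\mathcal{A}$ by injectivity of the spatial norm (see \cite{Mitch2}), so the norm it inherits as a subspace of $\mathcal{A}[0,1]$ is the correct one; the nuclearity of the commutative algebra $C[0,1]$ recorded above removes any ambiguity as to which tensor norm is meant. Under the identification with $\mathcal{A}[0,1]$ a simple tensor $g\otimes x$ is sent to the function $t\mapsto g(t)x$, so $C\F\otimes\mathcal{A}$ is carried onto the closure of the span of functions $t\mapsto g(t)x$ with $g(0)=0$. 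A standard density argument — approximating a continuous $\Hom (a,b)_{\mathcal{A}}$-valued function vanishing at $0$ by such sums, exactly as in the proof of $C[0,1]\otimes\mathcal{A}\cong\mathcal{A}[0,1]$ itself — shows that this closure is precisely the morphism-wise vanishing-at-$0$ condition defining $C\mathcal{A}$. The suspension case is the same, now imposing $g(0)=g(1)=0$ and using $\Sigma\F=C_0((0,1))$.

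Naturality in $\mathcal{A}$ and $\mathcal{B}$ is inherited at every stage: the base isomorphism $C[0,1]\otimes\mathcal{A}\cong\mathcal{A}[0,1]$ is natural, the inclusions of cone and suspension are natural, and the associativity and commutativity isomorphisms of the spatial tensor product are natural, so each composite displayed above is a natural isomorphism. The one genuinely analytic step — and the place where care is needed — is the identification of the previous paragraph: one must verify that the spatial completion of $C\F\odot\mathcal{A}$ sits isometrically inside that of $C[0,1]\odot\mathcal{A}$ and fills out exactly the vanishing-at-$0$ subcategory. Once this is in hand, everything else is bookkeeping with functorial isomorphisms already available from the theory of the spatial tensor product.
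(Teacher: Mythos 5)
Your argument is correct and follows exactly the route the paper intends: the proposition is stated with no proof beyond the remark that it ``follows'' from the isomorphism $C[0,1]\otimes{\mathcal A}\cong{\mathcal A}[0,1]$, and your reduction to $C{\mathcal A}\cong C\F\otimes{\mathcal A}$ and $\Sigma{\mathcal A}\cong\Sigma\F\otimes{\mathcal A}$ via that isomorphism, followed by associativity and commutativity of the spatial tensor product, is the standard way to fill in that gap. The one analytic point you flag --- that the spatial completion of $C\F\odot{\mathcal A}$ sits isometrically inside that of $C[0,1]\odot{\mathcal A}$ and is exactly the vanishing-at-$0$ subcategory --- is indeed the only step needing care, and your treatment of it is sound.
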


\begin{lemma}
Let $q\colon {\mathcal A}\rightarrow {\mathcal B}$ be a completely
positive map, and let $\mathcal C$ be a $C^\ast$-category.  Then the
obvious map
$$q\otimes 1 \colon {\mathcal A}\otimes {\mathcal C} \rightarrow
{\mathcal B}\otimes {\mathcal C}$$ is completely positive.
\end{lemma}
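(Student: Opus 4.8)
The plan is to imitate the standard $C^\ast$-algebra argument, in which a completely positive map is factored through a representation via Stinespring's theorem and the factorization is then tensored with the identity. Using Theorem \ref{GNS} I would first realise all of the $C^\ast$-categories in sight faithfully inside $\mathcal L$; by Proposition \ref{AFprop} such faithful $\ast$-functors are isometries, so they both preserve and reflect positivity. Consequently it suffices to verify the defining inequality of complete positivity for $q\otimes 1$ after composing with a faithful representation of ${\mathcal B}\otimes {\mathcal C}$, and in particular we may treat $q$ as an operator-valued completely positive map into $\mathcal L$.

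Next I would apply the Stinespring-type theorem for $C^\ast$-categories proved above to $q$, producing a representation $\rho\colon {\mathcal A}\rightarrow {\mathcal L}$ together with operators $V_a\colon H_a\rightarrow \rho(a)$ for which $q(x) = V_b^\ast \rho(x) V_a$ whenever $x\in \Hom(a,b)_{\mathcal A}$. I would then fix a faithful representation $\sigma\colon {\mathcal C}\rightarrow {\mathcal L}$ and form the representation $\rho\otimes \sigma\colon {\mathcal A}\otimes {\mathcal C}\rightarrow {\mathcal L}$ appearing in the very definition of the spatial tensor product; this is a genuine $\ast$-functor, since tensoring an isometric representation with the identity on Hilbert spaces again yields an isometric representation on the completed spatial tensor product.

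The key computation is then that, for a simple tensor $x\otimes c$, realised as an operator via the faithful representation $\rho\otimes\sigma$,
\[
(q\otimes 1)(x\otimes c) = q(x)\otimes \sigma(c) = (V_b\otimes 1)^\ast\bigl(\rho(x)\otimes \sigma(c)\bigr)(V_a\otimes 1) = (V_b\otimes 1)^\ast (\rho\otimes \sigma)(x\otimes c)\,(V_a\otimes 1),
\]
so that $q\otimes 1$ is exhibited as the compression of the representation $\rho\otimes \sigma$ of ${\mathcal A}\otimes {\mathcal C}$ by the operators $V_a\otimes 1$. Extending this identity to all morphisms by linearity and continuity, I would conclude by the easy direction of the theorem above --- precisely the computation in the first paragraph of its proof, showing that every compression $W_b^\ast\pi(\,\cdot\,)W_a$ of a representation is completely positive --- now applied to $\rho\otimes \sigma$.

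I expect the main obstacle to be that the Stinespring-type theorem above is stated only for unital $C^\ast$-categories and unit-preserving maps, whereas the lemma allows $q$ to be non-unital. I would handle this by reducing to the unital case, either by passing to unitisations or by observing that the Gelfand--Naimark--Segal construction in that proof still produces a bounded (no longer necessarily isometric) operator $V_a$ without the unital hypothesis; since the compression argument uses only boundedness of $V_a$, this suffices. A secondary point requiring care is that the factorization is compatible with completion, that is, that $\rho\otimes \sigma$ is well-defined and faithful and that the operators $V_a\otimes 1$ remain bounded on the spatial tensor product; these are the standard properties of the minimal tensor product recorded in the construction preceding Proposition \ref{cstp}.
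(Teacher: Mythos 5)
Your proposal is correct and follows essentially the same route as the paper: reduce to the case where $q$ is a unit-preserving operator-valued map, factor it as $q(x)=V_b^\ast\rho(x)V_a$ via the Stinespring-type theorem, tensor the representation and the isometries with the identity to exhibit $q\otimes 1$ as a compression of a representation of ${\mathcal A}\otimes{\mathcal C}$, and conclude by the easy direction of that theorem. The paper handles the non-unital issue exactly as you anticipate, by choosing representations and adjoining units at the outset, and likewise notes that the argument depends on using the spatial tensor product.
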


\begin{proof}
By choosing representations and adjoining units if necessary, we can
assume that $\mathcal B$ and $\mathcal C$ are unital subcategories
of $\mathcal L$, and the completely positive map $q$ is
unit-preserving.

Let $a\in \Ob ({\mathcal A})$, and write $q(a)=H_a$.  Then by
Stinespring's theorem, we have a unital $\ast$-functor $\rho \colon
{\mathcal A}\rightarrow {\mathcal L}$, and Hilbert space isometries
$V_a \colon H_a \rightarrow \rho (a)$ such that
$$q(x) = V_{a'}^\ast \rho (x) V_a$$
for all $x\in \Hom (a,a')_{\mathcal A}$.

Consider a Hilbert space $H_c \in \Ob ({\mathcal C})$.  Then we have
a unital $\ast$-functor $\tilde{\rho }\colon {\mathcal C}
\rightarrow {\mathcal L}$ defined by writing
$$\tilde{\rho} (a\otimes H_c ) \qquad a\in \Ob ({\mathcal A}),\ H_c
\in \Ob ({\mathcal C})$$ and
$$\tilde{\rho} (x\otimes y) = \rho (x) \otimes y \qquad x\in \Hom
(a,a')_{\mathcal A},\ y\in \Hom (H_c,H_{c'})_{\mathcal C}$$

Define isometries $\tilde{V}_{a\otimes H_c} \colon H_a\otimes H_c
\rightarrow \rho (a) \otimes H_c$ by the formula
$$\tilde{V}_{a\otimes H_c} (v\otimes w) = V_a (v) \otimes w$$

Then we have, for $x\in \Hom (a,a')_{\mathcal A}$ and $y\in \Hom
(H_c,H_{c'})_{\mathcal C}$
$$\tilde{V}_{a' \otimes H_{c'}}^\ast \tilde{\rho}(x\otimes
y)V_{a\otimes H_c} = q(x) \otimes y$$

Therefore, by Stinespring's theorem, the collection $q\otimes 1$ is
a completely positive map, as required.
\end{proof}

Note that the above proof relies on the fact that we are using the
spatial tensor product.

\begin{theorem} \label{sstens}
Let
\[
0\rightarrow {\mathcal I}\stackrel{i}{\rightarrow} {\mathcal E}
\stackrel{j}{\rightarrow} {\mathcal B}\rightarrow 0
\]
be a semi-split exact sequence.  Let $\mathcal C$ be any
$C^\ast$-category.  Then we have a semi-split exact sequence
\[
0\rightarrow {\mathcal I}\otimes {\mathcal C}\stackrel{i\otimes
1}{\rightarrow} {\mathcal E}\otimes {\mathcal C} \stackrel{j\otimes
1}{\rightarrow} {\mathcal B}\otimes {\mathcal C}\rightarrow 0
\]
\end{theorem}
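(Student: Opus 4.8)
The plan is to take $s\otimes 1\colon {\mathcal B}\otimes {\mathcal C}\rightarrow {\mathcal E}\otimes {\mathcal C}$ as the candidate completely positive splitting, where $s$ is the completely positive splitting of the original sequence, and then to verify the three exactness conditions directly before checking the semi-split condition. Since $j\circ s = 1_{\mathcal B}$ and $j$ acts as the identity on objects, the object map of $s$ is the identity, so all four $C^\ast$-categories in the tensored sequence share the common object set $\{a\otimes c\}$ and $i\otimes 1$, $j\otimes 1$ act as the identity on objects; the object condition in the definition of a short exact sequence is therefore immediate.

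Next I would dispose of injectivity and surjectivity. Because the original sequence is exact, $i$ is injective on each morphism space, hence faithful, and so an isometry by Proposition \ref{AFprop}; using that the spatial tensor product preserves faithfulness (see \cite{Mitch2}), the functor $i\otimes 1$ is again an isometry, in particular injective and with closed image. For $j\otimes 1$, note that it is a $\ast$-functor, so its image is closed by Proposition \ref{AFprop}; since $j$ is surjective on morphisms, this image contains every elementary tensor $j(x)\otimes y$ and hence the whole algebraic tensor product $\Hom(a,b)_{\mathcal B}\odot\Hom(c,c')_{\mathcal C}$, which is dense. A dense closed subspace is everything, so $j\otimes 1$ is surjective.

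The real content is exactness in the middle, and this is the step I expect to be the main obstacle, since the spatial tensor product does not preserve exactness for arbitrary extensions: it is exactly here that the semi-split hypothesis must be used. The inclusion $\im(i\otimes 1)\subseteq\ker(j\otimes 1)$ is clear from $(j\otimes 1)(i\otimes 1)=(ji)\otimes 1=0$. For the reverse inclusion I would introduce the bounded linear map $\Phi=(s\otimes 1)\circ(j\otimes 1)\colon {\mathcal E}\otimes{\mathcal C}\rightarrow{\mathcal E}\otimes{\mathcal C}$, which is well-defined and norm-decreasing because $s\otimes 1$ is completely positive by the preceding lemma and $s$ is norm-decreasing. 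On an elementary tensor one computes $(1-\Phi)(x\otimes y)=(x-sj(x))\otimes y$, and since $j(x-sj(x))=j(x)-j(x)=0$ the element $x-sj(x)$ lies in $\ker j=\im i$. Thus $1-\Phi$ carries the dense algebraic tensor product into $\im(i\odot 1)$, and by continuity it carries all of ${\mathcal E}\otimes{\mathcal C}$ into the closed subspace $\im(i\otimes 1)$. Now if $z\in\ker(j\otimes 1)$ then $\Phi(z)=(s\otimes 1)(j\otimes 1)(z)=0$, whence $z=(1-\Phi)(z)\in\im(i\otimes 1)$, giving $\ker(j\otimes 1)\subseteq\im(i\otimes 1)$.

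It remains to confirm the semi-split condition, which is now essentially free: $s\otimes 1$ is completely positive by the preceding lemma, is norm-decreasing, and satisfies $(j\otimes 1)\circ(s\otimes 1)=(js)\otimes 1=1$, so it is a completely positive splitting of the tensored sequence. The crux of the whole argument is the interplay in the middle step between the continuity of $1-\Phi$ and the closed-image property of Proposition \ref{AFprop}: together these force $\ker(j\otimes 1)$ down into the closure of the algebraic kernel, which is precisely what the completely positive splitting makes possible.
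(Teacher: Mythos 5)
Your proof is correct, and its overall architecture matches the paper's: both use the lemma that tensoring a completely positive map with the identity is again completely positive to get the splitting $s\otimes 1$ (the paper calls it $q\otimes 1$), both get surjectivity of $j\otimes 1$ and the inclusion $\im(i\otimes 1)\subseteq\ker(j\otimes 1)$ in the obvious ways, and both invoke the closed-image property of Proposition \ref{AFprop} for the $\ast$-functor $i\otimes 1$. The genuine difference is at the crux, exactness in the middle. The paper simply asserts that ``using the fact that $\im i=\ker j$'' each morphism set of $\im(i\otimes 1)$ is dense in the corresponding morphism set of $\ker(j\otimes 1)$, and then closes up; but that density claim is exactly the delicate point, since the failure of the spatial tensor product to preserve exactness in general is precisely the failure of $\ker j\odot{\mathcal C}$ to be dense in $\ker(j\otimes 1)$. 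You instead build the bounded map $\Phi=(s\otimes 1)(j\otimes 1)$, observe that $1-\Phi$ sends elementary tensors into $\im i\odot{\mathcal C}$ and hence, by continuity and closedness of $\im(i\otimes 1)$, sends everything into $\im(i\otimes 1)$, and that $\Phi$ kills $\ker(j\otimes 1)$, so $z=(1-\Phi)(z)$ lands where it should. This is the standard retraction argument and it is exactly the justification the paper's density assertion needs; it also makes visibly explicit where the semi-split hypothesis is used. So your write-up is, if anything, more complete than the paper's at the one step that matters; the only mild redundancy is your density-plus-closed-image argument for surjectivity of $j\otimes 1$, which follows in one line from $(j\otimes 1)(s\otimes 1)=1$.
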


\begin{proof}
Let $q\colon {\mathcal B}\rightarrow {\mathcal E}$ be a completely
positive map such that $j\circ q =1_{\mathcal B}$.  Then by the
above lemma, we have a completely positive map $q\otimes 1 \colon
{\mathcal B}\otimes {\mathcal C} \rightarrow {\mathcal E}\otimes
{\mathcal C}$ such that $(j\otimes 1)(q\otimes 1)= 1_{{\mathcal
B}\otimes {\mathcal C}}$.

The map $i\otimes 1$ is certainly injective, and by the above, the
map $j\otimes 1$ is surjective.  We know that $\im i = \ker j$.
Hence $ij=0$, and $(i\otimes 1)(j\otimes 1)=0$.  It follows that
$\im (i\otimes 1) \subseteq \ker (j\otimes 1)$.

Again using the fact that $\im i = \ker j$, we see that each
morphism set of the image $\im (i\otimes 1)$ is a dense subset of
the corresponding morphism set of the $C^\ast$-category $\ker
(j\otimes 1)$.  But $i\otimes 1$ is a $\ast$-functor, so by
proposition \ref{AFprop}, each morphism set of the image $\im
(i\otimes 1)$ is closed.  It follows that $\im (i\otimes 1) = \ker
(j\otimes 1)$, and the sequence
\[
0\rightarrow {\mathcal I}\otimes {\mathcal C}\stackrel{i\otimes
1}{\rightarrow} {\mathcal E}\otimes {\mathcal C} \stackrel{j\otimes
1}{\rightarrow} {\mathcal B}\otimes {\mathcal C}\rightarrow 0
\]
is exact.  The equation $(j\otimes 1)(q\otimes 1)= 1_{{\mathcal
B}\otimes {\mathcal C}}$ now tells us that the above short exact
sequence is semi-split, as required.
\end{proof}

\begin{definition}
Let $\mathcal A$ be a $C^\ast$-category.  Given objects $a,b\in \Ob ({\mathcal A})$, let us define
\[
\Hom (a,b)^{\odot (k+1)}_{\mathcal A} = \bigoplus_{c_i \in \Ob ({\mathcal A})} \Hom (a, c_1)\odot \Hom (c_1,c_2) \odot \cdots \otimes \Hom (c_k,b)
\]

The {\em tensor algebroid}, $T_\mathrm{alg} {\mathcal A}$, is the
algebroid with the same set of objects as the $C^\ast$-category
$\mathcal A$ and morphism sets \[ \Hom
(a,b)_{T_\mathrm{alg}{\mathcal A}} = \bigoplus_{k=1}^\infty \Hom
(a,b)^{\odot k}_{\mathcal A}
\]

Here the the space $\Hom (a,b)^{\otimes 1}_{\mathcal A}$ is simply
the morphism set $\Hom (a,b)_{\mathcal A}$.  Composition of
morphisms in the tensor algebroid is defined by concatenation of
tensors.
\end{definition}

We have a canonical set of linear maps $\sigma \colon {\mathcal
A}\rightarrow T_\mathrm{alg}{\mathcal A}$ defined by mapping each
morphism set of the category $\mathcal A$ onto the first summand.
This collection of linear maps is not compatible with the
composition defined in the two categories, and so does not define a
functor.  The following result is easy to check.

\begin{proposition}
The tensor algebroid, $T_\mathrm{alg}{\mathcal A}$, can be equipped
with a $C^\ast$-norm given by defining the norm, $\| u \|$, of a
morphism $u$ in the tensor algebroid to be the supremum of all
values $p(\varphi (u))$ where $\varphi \colon T_\mathrm{alg}
{\mathcal A}\rightarrow {\mathcal B}$ is a $\ast$-functor into a
$C^\ast$-category $\mathcal B$ such that the composition $\varphi
\circ \sigma \colon {\mathcal A}\rightarrow {\mathcal B}$ is
completely positive and norm-decreasing, and $p$ is a
$C^\ast$-seminorm on the $C^\ast$-category $\mathcal B$. \noproof
\end{proposition}

We define the {\em tensor $C^\ast$-category}, $T{\mathcal A}$, to be
the completion of the tensor algebroid $T_\mathrm{alg}{\mathcal A}$
with respect to the above norm.  Formation of the tensor
$C^\ast$-category defines a functor from the category of
$C^\ast$-categories and $\ast$-functors to itself.

We have a natural $\ast$-functor $\pi \colon T{\mathcal
A}\rightarrow {\mathcal A}$ defined to be the identity on the set of
objects, and by the formula
\[
\varphi (x_1 \otimes \cdots \otimes x_n) = x_n\ldots x_1
\]
for morphisms $x_i \in \Hom (c_i,c_{i+1})$.  It follows that there
is a $C^\ast$-category $J{\mathcal A}$ with the same objects as the
category $\mathcal A$, and morphism sets
\[
\Hom (a,b)_{J{\mathcal A}} = \ker \pi \colon \Hom (a,b)_{T{\mathcal A}} \rightarrow \Hom (a,b)_{\mathcal A}
\]

This category fits into a natural short exact sequence
\[
0\rightarrow J{\mathcal A} \hookrightarrow T{\mathcal A} \stackrel{\pi}{\rightarrow} {\mathcal A}\rightarrow 0
\]

Given a $\ast$-functor $\alpha \colon {\mathcal A}\rightarrow
{\mathcal B}$, we write $J\alpha \colon J{\mathcal A}\rightarrow
J{\mathcal B}$ to denote the induced $\ast$-functor.

\begin{proposition}
The above short exact sequence has a completely positive splitting
$\sigma \colon {\mathcal A}\rightarrow T{\mathcal A}$ defined by
mapping each morphism set of the category $\mathcal A$ onto the
first summand.
\end{proposition}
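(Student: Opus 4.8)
The plan is to establish the three properties bundled into the phrase \emph{completely positive splitting}: that $\pi\circ\sigma$ is the identity, that $\sigma$ is norm-decreasing, and that $\sigma$ is completely positive. The splitting property is immediate, since $\sigma(x)$ lands in the first summand $\Hom(a,b)^{\odot 1}_{\mathcal A}$, and the defining formula for $\pi$ applied to a length-one tensor gives $\pi(\sigma(x)) = x$. For norm-decreasingness I would read the inequality off directly from the definition of the norm on $T{\mathcal A}$: for any pair $(\varphi,p)$ occurring there we have $p(\varphi(\sigma(x))) = p((\varphi\circ\sigma)(x))$, and since $\varphi\circ\sigma$ is required to be norm-decreasing and every $C^\ast$-seminorm is dominated by the $C^\ast$-norm, this is at most $\|x\|$; taking the supremum over all admissible $(\varphi,p)$ yields $\|\sigma(x)\|\leq\|x\|$.

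The real content is complete positivity, and the strategy is to detect positivity in $T{\mathcal A}$ one representation at a time, using exactly the family of $\ast$-functors that defines the norm. First I would reduce to representations valued in $\mathcal L$: given a pair $(\varphi\colon T{\mathcal A}\to{\mathcal B},\,p)$ from the definition of the norm, pass to the quotient of $\mathcal B$ by the null morphisms of $p$, complete, and apply Theorem \ref{GNS} to obtain a faithful $\ast$-functor into $\mathcal L$; composing produces a $\ast$-functor $\varphi'\colon T{\mathcal A}\to{\mathcal L}$ with $p(\varphi(u)) = \|\varphi'(u)\|$ and with $\varphi'\circ\sigma$ still completely positive, being a $\ast$-functor applied after the completely positive map $\varphi\circ\sigma$. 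Hence the subfamily $\mathcal F$ of $\ast$-functors $\varphi\colon T{\mathcal A}\to{\mathcal L}$ with $\varphi\circ\sigma$ completely positive still computes the norm, so it is jointly isometric; its direct sum is therefore a faithful $\ast$-functor, which by Proposition \ref{AFprop} is isometric on every morphism set and so reflects positivity.

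With this in hand, let $x_i\in\Hom(b,c)_{\mathcal A}$ and $y_j\in\Hom(a,b)_{T{\mathcal A}}$ and set $u = \sum_{i,j=1}^n y_i^\ast\,\sigma(x_i^\ast x_j)\,y_j$. To prove $u\geq 0$ it suffices, by the previous paragraph, to prove $\varphi(u)\geq 0$ for each $\varphi\in\mathcal F$. For such a $\varphi$ I would compute
\[
\varphi(u) = \sum_{i,j=1}^n \varphi(y_i)^\ast\,(\varphi\circ\sigma)(x_i^\ast x_j)\,\varphi(y_j).
\]
Because $\sigma$ is the identity on objects, $(\varphi\circ\sigma)(a) = \varphi(a)$, so each $\varphi(y_j)$ lies in ${\mathcal L}(\varphi(a),\varphi(b)) = {\mathcal L}((\varphi\circ\sigma)(a),(\varphi\circ\sigma)(b))$; the right-hand side is then precisely the expression that the complete positivity of $\varphi\circ\sigma$ guarantees to be a positive operator. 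Thus $\varphi(u)\geq 0$ for every $\varphi\in\mathcal F$, whence $u\geq 0$ in $\Hom(a,a)_{T{\mathcal A}}$, which is exactly the complete positivity of $\sigma$.

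The step requiring the most care — and the one I expect to be the main obstacle — is showing that the family $\mathcal F$ detects positivity in $T{\mathcal A}$. This rests on two points that must be checked rather than assumed: that reducing an arbitrary target $(\mathcal B,p)$ to a Hilbert space representation via Theorem \ref{GNS} preserves both the seminorm value and the complete positivity of the composite with $\sigma$, and that a jointly isometric family of $\ast$-functors assembles into a single faithful one (restricting to representations on a bounded collection of Hilbert spaces, as in the footnote to Theorem \ref{GNS}, so that $\mathcal F$ is a set), so that Proposition \ref{AFprop} can be invoked to conclude it reflects positivity. Once these are secured, the remaining computation is the purely formal manipulation displayed above.
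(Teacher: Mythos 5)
Your argument is correct, but it takes a genuinely longer route than the paper does. The paper's own proof is two sentences: it notes that $\pi \circ \sigma = 1_{\mathcal A}$ and that $\sigma$ is compatible with the involution, and then asserts that complete positivity ``follows by the $C^\ast$-category axiom that says that the product $x^\ast x$ is positive''; representations are never mentioned. You instead detect the positivity of $u=\sum_{i,j}y_i^\ast\,\sigma(x_i^\ast x_j)\,y_j$ by pushing it through the very family of $\ast$-functors $\varphi$ with $\varphi\circ\sigma$ completely positive and norm-decreasing that defines the norm on $T{\mathcal A}$, reducing to targets in $\mathcal L$, assembling a faithful direct sum, and using that a faithful (hence isometric) $\ast$-functor reflects positivity. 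This is the honest way to see the claim: inside $T{\mathcal A}$ the element $\sigma(x_i^\ast x_j)$ is a length-one tensor and is \emph{not} equal to $\sigma(x_i)^\ast\sigma(x_j)$, so $u$ cannot be exhibited directly as something of the form $z^\ast z$, and the bare positivity axiom does not by itself deliver the conclusion for arbitrary $y_j\in\Hom(a,b)_{T{\mathcal A}}$. What your approach buys is a complete verification that is consistent with how the norm on $T{\mathcal A}$ was constructed; what it costs is the pair of auxiliary checks you flag yourself --- that passing from $({\mathcal B},p)$ to a Hilbert-space target via Theorem \ref{GNS} preserves both the seminorm value and the complete positivity of the composite with $\sigma$, and that the jointly isometric family can be summed (as a set) to a single faithful $\ast$-functor. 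Both are standard, and you identify them accurately, so the proposal stands; it supplies precisely the detail that the paper's own argument leaves implicit.
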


\begin{proof}
It is obvious that $\sigma \circ \pi = 1_{\mathcal A}$, and that the
map $\sigma$ is compatible with the involution.  The result now
follows by the $C^\ast$-category axiom that says that the product
$x^\ast x$ is positive for any morphism $x$ in a $C^\ast$-category.
\end{proof}

The following now follows from theorem \ref{sstens}.

\begin{corollary} \label{jtens}
Let $\mathcal A$ and $\mathcal C$ be $C^\ast$-categories.  Then we
have a semi-split exact sequence
$$0 \rightarrow (J{\mathcal A})\otimes {\mathcal C} \rightarrow (T{\mathcal A})\otimes {\mathcal C} \stackrel{\pi \otimes 1}{\rightarrow} {\mathcal A}\otimes {\mathcal C} \rightarrow 0$$
\noproof
\end{corollary}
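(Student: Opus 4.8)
The plan is to apply Theorem \ref{sstens} directly to the short exact sequence
$$0\rightarrow J{\mathcal A} \hookrightarrow T{\mathcal A} \stackrel{\pi}{\rightarrow} {\mathcal A}\rightarrow 0$$
constructed above. The preceding proposition already establishes that this sequence is semi-split: the map $\sigma\colon {\mathcal A}\rightarrow T{\mathcal A}$ sending each morphism set onto the first tensor summand is a completely positive map satisfying $\pi\circ\sigma = 1_{\mathcal A}$, and it is norm-decreasing by the very definition of the $C^\ast$-norm on $T{\mathcal A}$ (that norm is a supremum over $\ast$-functors $\varphi$ for which $\varphi\circ\sigma$ is norm-decreasing). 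Hence the hypotheses of Theorem \ref{sstens} are met, with ${\mathcal I}=J{\mathcal A}$, ${\mathcal E}=T{\mathcal A}$, ${\mathcal B}={\mathcal A}$, the canonical inclusion playing the role of $i$, and $j=\pi$.

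Feeding the given $C^\ast$-category $\mathcal C$ into the theorem then produces a semi-split exact sequence
$$0\rightarrow (J{\mathcal A})\otimes {\mathcal C}\stackrel{i\otimes 1}{\rightarrow} (T{\mathcal A})\otimes {\mathcal C} \stackrel{\pi\otimes 1}{\rightarrow} {\mathcal A}\otimes {\mathcal C}\rightarrow 0,$$
which is exactly the claimed sequence once $i\otimes 1$ is identified with the evident inclusion $(J{\mathcal A})\otimes {\mathcal C}\hookrightarrow (T{\mathcal A})\otimes {\mathcal C}$. The completely positive splitting of the tensored sequence is $\sigma\otimes 1$: it is completely positive by the lemma preceding Theorem \ref{sstens}, and it splits $\pi\otimes 1$ since $(\pi\otimes 1)(\sigma\otimes 1) = (\pi\circ\sigma)\otimes 1 = 1_{{\mathcal A}\otimes{\mathcal C}}$.

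Because the whole argument is a single application of an already-proved theorem, there is no genuine obstacle left at this stage; the substantive work has been done earlier. The exactness and closed-image bookkeeping sits inside the proof of Theorem \ref{sstens} (which uses Proposition \ref{AFprop} to upgrade density of $\im(i\otimes 1)$ to equality with $\ker(j\otimes 1)$), and the stability of complete positivity under tensoring with $\mathcal C$ is precisely the lemma on which that theorem rests. The only point I would flag explicitly is that, exactly as in that lemma, the construction depends on the spatial tensor product being used throughout.
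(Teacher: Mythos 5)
Your proof is correct and is exactly the paper's argument: the corollary is stated there with the single remark that it ``follows from theorem \ref{sstens}'' applied to the semi-split sequence $0 \rightarrow J{\mathcal A} \rightarrow T{\mathcal A} \stackrel{\pi}{\rightarrow} {\mathcal A} \rightarrow 0$ with splitting $\sigma$. Your additional observations (that $\sigma$ is norm-decreasing by the definition of the norm on $T{\mathcal A}$, and that the tensored splitting is $\sigma\otimes 1$) are accurate elaborations of details the paper leaves implicit.
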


We refer to the following result as the {\em universal property} of
the tensor $C^\ast$-category.

\begin{proposition} \label{Tuniversal}
Let $\alpha \colon {\mathcal A}\rightarrow {\mathcal B}$ be a
norm-decreasing collection of completely positive linear maps
between $C^\ast$-categories.  Then there is a unique $\ast$-functor
$\varphi \colon T{\mathcal A}\rightarrow {\mathcal B}$ such that
$\alpha = \varphi \circ \sigma$.
\end{proposition}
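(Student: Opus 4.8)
The plan is to construct $\varphi$ first on the dense tensor algebroid $T_\mathrm{alg}{\mathcal A}$, exploiting its freeness, and then extend by continuity to the completion $T{\mathcal A}$. First I would observe that, although the hypothesis asserts only that $\alpha$ is a norm-decreasing completely positive map, such a collection of maps is automatically compatible with the involution, so that $\alpha(x^\ast) = \alpha(x)^\ast$ on each morphism set; this is the standard fact that completely positive maps are $\ast$-preserving. With this in hand, $\alpha$ supplies exactly the data needed to define a $\ast$-functor out of the free object $T_\mathrm{alg}{\mathcal A}$.

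Next I would define $\varphi_\mathrm{alg} \colon T_\mathrm{alg}{\mathcal A} \to {\mathcal B}$ on generators by $\sigma(x) \mapsto \alpha(x)$, extended to arbitrary tensors by $\varphi_\mathrm{alg}(x_1 \otimes \cdots \otimes x_n) = \alpha(x_n) \cdots \alpha(x_1)$, matching the composition convention used for $\pi$. Because the morphism sets of $T_\mathrm{alg}{\mathcal A}$ are by construction free on those of ${\mathcal A}$ with composition given by concatenation of tensors, this prescription is forced and yields a well-defined linear functor; checking that it respects the involution reduces to the $\ast$-preservation of $\alpha$ noted above. This also establishes the identity $\varphi_\mathrm{alg} \circ \sigma = \alpha$, and shows that $\varphi_\mathrm{alg}$ is the only $\ast$-functor on $T_\mathrm{alg}{\mathcal A}$ with this property, since any such functor agrees with it on the generating set $\sigma({\mathcal A})$ and hence, by linearity and multiplicativity, everywhere.

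The crucial step is to show that $\varphi_\mathrm{alg}$ is norm-decreasing for the $C^\ast$-norm on $T_\mathrm{alg}{\mathcal A}$. Here I would appeal directly to the definition of that norm as the supremum of $p(\varphi(u))$ taken over all $\ast$-functors $\varphi$ whose precomposition with $\sigma$ is completely positive and norm-decreasing, and all $C^\ast$-seminorms $p$ on the target. Our functor satisfies $\varphi_\mathrm{alg} \circ \sigma = \alpha$, which is completely positive and norm-decreasing by hypothesis, so the pair consisting of $\varphi_\mathrm{alg}$ and the genuine norm $\|\cdot\|_{\mathcal B}$ is admissible in this supremum. Consequently $\|\varphi_\mathrm{alg}(u)\|_{\mathcal B} \leq \|u\|$ for every morphism $u$, so $\varphi_\mathrm{alg}$ is bounded on each morphism set.

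Finally, since ${\mathcal B}$ is a $C^\ast$-category its morphism sets are complete, so the norm-decreasing map $\varphi_\mathrm{alg}$ extends uniquely and continuously to a $\ast$-functor $\varphi \colon T{\mathcal A} \to {\mathcal B}$ on the completion, still satisfying $\varphi \circ \sigma = \alpha$. Uniqueness on all of $T{\mathcal A}$ then follows because $T_\mathrm{alg}{\mathcal A}$ is dense and every $\ast$-functor is norm-decreasing by Proposition \ref{AFprop}. I expect the main obstacle to be bookkeeping rather than conceptual: the one point genuinely requiring care is the well-definedness and $\ast$-compatibility of the free extension $\varphi_\mathrm{alg}$, together with confirming that the actual norm on ${\mathcal B}$ counts as an admissible seminorm, so that the supremum defining the tensor norm indeed dominates it.
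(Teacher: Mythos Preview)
Your proposal is correct and follows essentially the same approach as the paper: define $\varphi$ on $T_\mathrm{alg}{\mathcal A}$ by $\varphi(x_1 \otimes \cdots \otimes x_n) = \alpha(x_n)\cdots\alpha(x_1)$, observe it is the unique $\ast$-functor with $\varphi\circ\sigma = \alpha$, invoke the definition of the tensor norm to see $\varphi$ is norm-decreasing, and extend by continuity. Your version is somewhat more explicit than the paper's---in particular you spell out the $\ast$-preservation of $\alpha$ and the uniqueness argument on the completion via density and Proposition~\ref{AFprop}---but the underlying argument is identical.
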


\begin{proof}
We can define a $\ast$-functor $\varphi \colon
T_\mathrm{alg}{\mathcal A}\rightarrow {\mathcal B}$ by writing
$\varphi (a) = \alpha (a)$ for each object $a\in \Ob ({\mathcal
A})$, and
\[
\varphi (x_1 \otimes \cdots \otimes x_n) = \alpha (x_n)\ldots \alpha (x_1)
\]
for morphisms $x_i \in \Hom (C_i,C_{i+1})$.  It is easy to see that
$\varphi$ is the unique $\ast$-functor with the property that
$\alpha =\varphi \circ \sigma$.

By definition of the norm on the tensor algebroid,
$T_\mathrm{alg}{\mathcal A}$, the $\ast$-functor $\varphi$ is
norm-decreasing on each morphism set in the tensor algebroid, and
therefore extends to a $\ast$-functor $\varphi \colon T{\mathcal
A}\rightarrow {\mathcal B}$ by continuity.
\end{proof}

\begin{proposition} \label{classmap}
Let
\[
0\rightarrow {\mathcal I}\rightarrow {\mathcal E} \rightarrow {\mathcal B} \rightarrow 0
\]
be a semi-split short exact sequence of $C^\ast$-categories.  Let
$\alpha \colon {\mathcal A}\rightarrow {\mathcal B}$ be a
$\ast$-functor.  Then there are $\ast$-functors $\tau \colon
T{\mathcal A}\rightarrow {\mathcal E}$ and $\gamma \colon J{\mathcal
A}\rightarrow {\mathcal I}$ such that we have a commutative diagram
\[
\begin{array}{ccccccccc}
0 & \rightarrow & J{\mathcal A} & \rightarrow & T{\mathcal A} & \rightarrow & {\mathcal A} & \rightarrow & 0 \\
& & \downarrow & & \downarrow & & \downarrow \\
0 & \rightarrow & {\mathcal I} & \rightarrow & {\mathcal E} & \rightarrow & {\mathcal B} & \rightarrow & 0 \\
\end{array}
\]
\end{proposition}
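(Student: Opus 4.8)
The plan is to construct $\tau$ using the completely positive splitting of the bottom row together with the universal property of the tensor $C^\ast$-category (Proposition \ref{Tuniversal}), and then to obtain $\gamma$ by restriction. Write $i\colon {\mathcal I}\to {\mathcal E}$ and $j\colon {\mathcal E}\to {\mathcal B}$ for the $\ast$-functors of the bottom sequence. Since that sequence is semi-split, there is a norm-decreasing completely positive map $s\colon {\mathcal B}\to {\mathcal E}$ with $j\circ s = 1_{\mathcal B}$. The $\ast$-functor $\alpha$ is itself norm-decreasing (Proposition \ref{AFprop}) and completely positive, and because it preserves products and the involution the composite $s\circ \alpha\colon {\mathcal A}\to {\mathcal E}$ is again norm-decreasing and completely positive: for suitable morphisms $x_i, y_j$ one has $\sum_{i,j} y_i^{\ast} s(\alpha(x_i^{\ast}x_j)) y_j = \sum_{i,j} y_i^{\ast} s(\alpha(x_i)^{\ast}\alpha(x_j)) y_j$, which is positive by complete positivity of $s$. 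Applying Proposition \ref{Tuniversal} to $s\circ\alpha$ then yields a unique $\ast$-functor $\tau\colon T{\mathcal A}\to {\mathcal E}$ with $\tau\circ \sigma = s\circ\alpha$.

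Next I would check that the right-hand square commutes, i.e. that $j\circ\tau = \alpha\circ\pi$. Both of these are $\ast$-functors from $T{\mathcal A}$ to ${\mathcal B}$, and precomposing with $\sigma$ gives $(j\circ\tau)\circ\sigma = j\circ s\circ\alpha = \alpha$ and $(\alpha\circ\pi)\circ\sigma = \alpha$, using $j\circ s = 1_{\mathcal B}$ and $\pi\circ\sigma = 1_{\mathcal A}$. Since $\alpha$ is a norm-decreasing completely positive map, the uniqueness clause of the universal property forces $j\circ\tau = \alpha\circ\pi$. Restricting to $J{\mathcal A} = \ker\pi$, on which $\pi$ vanishes, we get $j\circ\tau = 0$ there, so $\tau$ carries each morphism set of $J{\mathcal A}$ into $\ker j = \im i$. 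As $i$ is injective, I would then define $\gamma\colon J{\mathcal A}\to {\mathcal I}$ by $\gamma = i^{-1}\circ(\tau|_{J{\mathcal A}})$; this is a $\ast$-functor, and the left-hand square commutes by construction.

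The one step that genuinely requires care is the commutativity $j\circ\tau = \alpha\circ\pi$ of the right square. One cannot verify this by a naive computation on an arbitrary morphism of $T{\mathcal A}$, since such a morphism lies in the completion of an infinite direct sum of tensor powers; instead the correct tool is the uniqueness half of Proposition \ref{Tuniversal}, applied to the completely positive map $\alpha$. Everything else is formal: complete positivity of $s\circ\alpha$ is immediate once one uses that $\alpha$ is a $\ast$-functor, and the existence of $\gamma$ is a routine diagram chase relying only on injectivity of $i$ and the identity $\im i = \ker j$ coming from exactness of the bottom row.
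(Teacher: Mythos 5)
Your proposal is correct and follows essentially the same route as the paper: take the completely positive splitting $s$, apply the universal property of $T{\mathcal A}$ to the norm-decreasing completely positive map $s\circ\alpha$ to get $\tau$, and define $\gamma$ by restriction. The paper simply asserts that $\tau$ "fits into the diagram," whereas you supply the (correct) verification via the uniqueness clause of Proposition \ref{Tuniversal}; this is a welcome elaboration, not a different argument.
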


\begin{proof}
Let $s\colon {\mathcal B}\rightarrow {\mathcal E}$ be a completely positive splitting.

Then by the universal property of the tensor $C^\ast$-category we
have a unique $\ast$-functor $\tau \colon T{\mathcal A}\rightarrow
{\mathcal E}$ such that $s\circ \alpha = \tau \circ \sigma$.  It
follows that the homomorphism $\tau$ fits into the above diagram.
The homomorphism $\gamma$ is defined by restriction of $\tau$.
\end{proof}

\begin{definition}
The homomorphism $\gamma$ is called the {\em classifying map} of the diagram
\[
\begin{array}{ccccccccc}
& & & & & & {\mathcal A} \\
& & & & & & \downarrow \\
0 & \rightarrow & {\mathcal I} & \rightarrow & {\mathcal E} & \rightarrow & {\mathcal B} & \rightarrow & 0 \\
\end{array}
\]
\end{definition}

\section{$C^\ast$-algebras associated to $C^\ast$-categories}

In \cite{Jo2}, Joachim defined the $K$-theory spectrum of a
$C^\ast$-category by associating a certain $C^\ast$-algebra to a
$C^\ast$-category and then defining the $K$-theory of the
$C^\ast$-category to be the $K$-theory of the associated
$C^\ast$-algebra.  In this section, we make a similar construction
in order to look at $KK$-theory spectra.  However, our
$C^\ast$-algebra will be based on constructions of $K$-theory
spectra in \cite{Mitch2.5} rather than on Joachim's
$C^\ast$-algebra.

\begin{definition}
Let $\mathcal A$ be a $C^\ast$-category.  Then we define the {\em
additive completion}, ${\mathcal A}_\oplus$, to be the algebroid in
which the objects are formal sums $a_1 \oplus \cdots \oplus a_m$,
where $a_i \in \Ob ({\mathcal A})$.  For natural numbers $m,n \in
\N$, the morphism set $\Hom (a_1 \oplus \cdots \oplus a_m , b_1
\oplus \cdots \oplus b_n )$ is the set of matrices
\[
\left\{ \left( \begin{array}{ccc}
x_{1,1} & \cdots & x_{1,m} \\
\vdots & \ddots & \vdots \\
x_{n,1} & \cdots & x_{n,m} \\
\end{array} \right)
\ |\ x_{i,j} \in \Hom (a_j,b_i)_{\mathcal A} \right\}
\]

Composition of matrices is defined by matrix multiplication.  The involution is defined by the formula
\[
\left( \begin{array}{ccc}
x_{1,1} & \cdots & x_{1,m} \\
\vdots & \ddots & \vdots \\
x_{n,1} & \cdots & x_{n,m} \\
\end{array} \right)^\ast
=
\left( \begin{array}{ccc}
x_{1,1}^\ast & \cdots & x_{n,1}^\ast \\
\vdots & \ddots & \vdots \\
x_{1,m}^\ast & \cdots & x_{n,m}^\ast \\
\end{array} \right)
\]
\end{definition}

Given objects $a = a_1 \oplus \cdots \oplus a_m$ and $b= b_1\oplus \cdots \oplus b_n$, we define
\[
a \oplus b = a_1 \oplus \cdots \oplus a_m \oplus b_1\oplus \cdots \oplus b_n
\]

As a special case, we define $0$ to be the formal sum of no objects.
We have morphism sets $\Hom (0,a)_{{\mathcal A}_\oplus } = \{ 0 \}$
and $\Hom (a,0)_{{\mathcal A}_\oplus } = \{ 0 \}$ for each object
$a\in \Ob ({\mathcal A}_\oplus )$.

It is clear that the additive completion of a $C^\ast$-category is
an additive category.  Given a $\ast$-functor $\alpha \colon
{\mathcal A}\rightarrow {\mathcal B}$ where the $C^\ast$-category
$\mathcal B$ is additive, there is an obvious induced additive
functor $\alpha \colon {\mathcal A}_\oplus \rightarrow {\mathcal
B}$.  In particular, given a faithful representation $\rho \colon
{\mathcal A}\rightarrow {\mathcal L}$, there is an induced faithful
representation $\rho_\oplus \colon {\mathcal A}_\oplus \rightarrow
{\mathcal L}$.

We can define a $C^\ast$-norm on the category ${\mathcal A}_\oplus$
by deeming the induced representation $\rho_\oplus$ to be an
isometry.  The category $\mathcal A$ is a $C^\ast$-category with
respect to this norm.  Further, the norm does not depend on the
representation $\rho$.  Thus the additive completion is a functor
from the category of $C^\ast$-categories and $\ast$-functors to the
category of additive $C^\ast$-categories and additive
$\ast$-functors.

Further details of this construction and proofs of the above
statements can by found in \cite{Mitch2.5}.    The following result
is easy to check.

\begin{proposition}
Let $\mathcal A$ be a $C^\ast$-category.  Then we have natural isomorphisms
\[
(\Sigma {\mathcal A})_\oplus \cong \Sigma ({\mathcal A}_\oplus )
\qquad (J{\mathcal A})_\oplus \cong J({\mathcal A}_\oplus ) \qquad
(C{\mathcal A})_\oplus \cong C({\mathcal A}_\oplus )
\]
\noproof
\end{proposition}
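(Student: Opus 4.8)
The plan is to produce, in each of the three cases, an evident $\ast$-functor that is a bijection on objects and on every morphism set, and then to invoke Proposition \ref{AFprop}: a faithful $\ast$-functor between $C^\ast$-categories is automatically isometric. This lets me avoid any direct comparison of norms. Once I exhibit an algebraic bijection that respects composition and the involution, it is automatically an isometric isomorphism of $C^\ast$-categories, and its inverse is again a $\ast$-functor.

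For the cone and the suspension the map is the ``transpose'' identification. Both $(C{\mathcal A})_\oplus$ and $C({\mathcal A}_\oplus)$ have the formal sums $a_1\oplus\cdots\oplus a_m$ of objects of $\mathcal A$ as their objects. A morphism of $(C{\mathcal A})_\oplus$ is a matrix $(f_{ij})$ whose entries are continuous paths in $\mathcal A$ vanishing at $0$, while a morphism of $C({\mathcal A}_\oplus)$ is a single continuous path $t\mapsto (g_{ij}(t))$ of matrices over $\mathcal A$ vanishing at $0$; since the matrix norm on a morphism set of ${\mathcal A}_\oplus$ is comparable to the maximum of the entry norms, a matrix-valued path is continuous (and vanishes at $0$) precisely when each entry is. Thus $\Theta\colon (f_{ij})\mapsto \big(t\mapsto (f_{ij}(t))\big)$ is a linear bijection on morphism sets. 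Matrix multiplication commutes with pointwise evaluation, so $\Theta$ preserves composition, and it clearly commutes with the two involutions, so $\Theta$ is a faithful (indeed bijective) $\ast$-functor and hence an isometric isomorphism by Proposition \ref{AFprop}. The suspension case is identical, adding the condition $f(1)=0$ on both sides.

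The case of $J$ is the one requiring real work, and I would reduce it to the statement $(T{\mathcal A})_\oplus\cong T({\mathcal A}_\oplus)$, compatibly with the two projections onto ${\mathcal A}_\oplus$. First note that the additive completion is exact: applying $(-)_\oplus$ to a short exact sequence replaces each morphism space by its space of matrices, and finite direct sums preserve exactness, so kernels are preserved; in particular $(J{\mathcal A})_\oplus=\ker\big((\pi_{\mathcal A})_\oplus\big)$. To build the comparison of tensor $C^\ast$-categories, apply the entrywise map $\sigma$ to matrices to obtain $(\sigma_{\mathcal A})_\oplus\colon {\mathcal A}_\oplus\to (T{\mathcal A})_\oplus$; this is norm-decreasing and completely positive, by the same argument (tensoring a completely positive map with the finite matrix structure) used in the lemma preceding Theorem \ref{sstens}. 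The universal property of the tensor $C^\ast$-category, Proposition \ref{Tuniversal}, then yields a unique $\ast$-functor $\Phi\colon T({\mathcal A}_\oplus)\to (T{\mathcal A})_\oplus$ with $\Phi\circ\sigma_{{\mathcal A}_\oplus}=(\sigma_{\mathcal A})_\oplus$; concretely $\Phi$ ``expands'' a tensor routed through a sum object into the matrix of tensors routed through its summands. Computing both sides on $\sigma_{{\mathcal A}_\oplus}$ shows $(\pi_{\mathcal A})_\oplus\circ\Phi=\pi_{{\mathcal A}_\oplus}$, so once $\Phi$ is known to be an isomorphism it restricts to the desired isomorphism $J({\mathcal A}_\oplus)=\ker\pi_{{\mathcal A}_\oplus}\cong\ker\big((\pi_{\mathcal A})_\oplus\big)=(J{\mathcal A})_\oplus$.

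The main obstacle is proving that $\Phi$ is an isomorphism. I would exhibit an explicit inverse ``contraction'' map, assembling a matrix of tensors over $\mathcal A$ into a single tensor over ${\mathcal A}_\oplus$ routed through the concatenated sum objects, and check that it and $\Phi$ are mutually inverse on the dense tensor algebroids $T_\mathrm{alg}({\mathcal A}_\oplus)$ and $(T_\mathrm{alg}{\mathcal A})_\oplus$ by a direct degree-by-degree computation, extending by continuity. The delicate point is that the two sides are built differently, one by the universal property and the supremum norm on the tensor algebroid, the other by forming matrices, so the identification is genuinely combinatorial rather than a naive transpose; note in particular that $T{\mathcal A}$ is non-unital, so the shortcut of appealing to the universal property of the additive completion (which needs an additive, hence unital, target) is not available. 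Once the algebraic bijection is in hand, Proposition \ref{AFprop} again upgrades it to an isometric isomorphism, and naturality of all three isomorphisms in $\mathcal A$ is automatic, since every map used ($\sigma$, $\pi$, the transpose, and entrywise application) is natural and the comparison maps are characterised uniquely.
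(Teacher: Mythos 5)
Your treatment of the cone and the suspension is correct, and is surely the routine argument the paper has in mind when it omits the proof: both sides have the same objects, the norm on a morphism set of ${\mathcal A}_\oplus$ is equivalent to the maximum of the entry norms (compress a matrix over a faithful representation by the coordinate projections), so a matrix of paths vanishing at $0$ is the same thing as a continuous path of matrices vanishing at $0$, and the resulting bijective $\ast$-functor is isometric by Proposition \ref{AFprop}. The same goes for your observation that $(-)_\oplus$ preserves kernels, so that $(J{\mathcal A})_\oplus = \ker \left( (\pi_{\mathcal A})_\oplus \right)$.

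The $J$ case, however, contains a step that fails: the intermediate claim $(T{\mathcal A})_\oplus \cong T({\mathcal A}_\oplus )$ is false, because the $\ast$-functor $\Phi$ you obtain from Proposition \ref{Tuniversal} is not injective, so no contraction map can be a two-sided inverse for it. The point is that $\Hom (a,b)^{\odot (k+1)}_{{\mathcal A}_\oplus}$ is by definition a direct sum over all tuples of intermediate objects of ${\mathcal A}_\oplus$, that is over all formal sums, and $\Phi$ forgets the routing; the norm on the tensor algebroid separates different routings because completely positive maps are not multiplicative. Concretely, take ${\mathcal A}=\mathbb C$ with single object $\star$, write $\star^n$ for the $n$-fold formal sum, and let $e\in \Hom (\star ,\star^2)_{{\mathcal A}_\oplus}$ be the column matrix with entries $1$ and $0$, so $e^\ast e =1$. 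The degree-two tensors $u=1\otimes 1$ (routed through $\star$) and $v=e\otimes e^\ast$ (routed through $\star^2$) lie in different summands, and $\Phi (u)=\Phi (v)=1\otimes 1$ in $(T{\mathbb C})_\oplus$. But $u\neq v$ in $T({\mathcal A}_\oplus )$: let $f_1 ,f_2$ be the standard basis of ${\mathbb C}^2$, let $\rho (\star^n )={\mathbb C}^n \otimes {\mathbb C}^2$ with $\rho (T)=T\otimes 1$, and take the isometries $V_\star (1)=1\otimes f_1$ and $V_{\star^n}(1)=e_1 \otimes f_2$ for $n\geq 2$. Stinespring gives a norm-decreasing completely positive collection $\alpha (x)=V_b^\ast \rho (x)V_a$ with $\alpha (1)=1$ and $\alpha (e)=0$, and the induced $\ast$-functor $\varphi$ of Proposition \ref{Tuniversal} satisfies $\varphi (u)=\alpha(1)\alpha(1)=1$ while $\varphi (v)=\alpha (e^\ast )\alpha (e)=0$, whence $\| u-v\| \geq 1$ in $T({\mathcal A}_\oplus )$. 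Your degree-by-degree check that $\Phi$ and the contraction are mutually inverse therefore already breaks down in degree two. Note moreover that $\pi (u-v)=1-e^\ast e=0$, so $u-v$ is a nonzero element of $J({\mathcal A}_\oplus )$ killed by the natural comparison map to $(J{\mathcal A})_\oplus$; whatever argument establishes the middle isomorphism, it cannot factor through an identification of the tensor $C^\ast$-categories, and this case is genuinely not of the same elementary character as the other two.
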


Given a $C^\ast$-category $\mathcal A$, we would like to define an
associated $C^\ast$-algebra that, roughly speaking, carries the same
information as the $C^\ast$-category $\mathcal A$.  The naive way to
do this is to simply form the completion of the union
$$\bigcup_{a_i \in \Ob ({\mathcal A}} \Hom (a_1 \oplus \cdots \oplus a_n,a_1 \oplus \cdots \oplus a_n)_{{\mathcal A}_\oplus}$$
with respect to the inclusions
$$\Hom (a\oplus c,a\oplus c)_{{\mathcal A}_\oplus} \rightarrow \Hom (a\oplus b\oplus c, a\oplus b\oplus c)_{{\mathcal A}_\oplus}$$
\[
\left( \begin{array}{cc}
w & x \\
y & z \\
\end{array} \right)
\mapsto
\left( \begin{array}{ccc}
w & 0 & x \\
0 & 0 & 0 \\
y & 0 & z \\
\end{array} \right)
\]
Unfortunately, this construction is not functorial.  We can, however, replace it by an equivalent functorial construction.

\begin{definition}
Let $\mathcal A$ be a $C^\ast$-category.  Then we define ${\mathcal
O}_{\mathcal A}$ to be the category in which the objects are all
compositions of inclusions of the above form.

A morphism set between two inclusions has precisely one element if the inclusions are composable; otherwise, it is empty.
\end{definition}

We define a functor, $H_{\mathcal A}$, from the category ${\mathcal
O}_{\mathcal A}$ to the category of $C^\ast$-algebras by associating
the $C^\ast$-algebra $\Hom (a\oplus c,a\oplus c)_{{\mathcal
A}_\oplus}$ to the inclusion $\Hom (a\oplus c,a\oplus c)_{{\mathcal
A}_\oplus} \rightarrow \Hom (a\oplus b\oplus c,a\oplus b\oplus
c)_{{\mathcal A}_\oplus}$.  If $i$ and $j$ are composable
inclusions, then the one morphism in the set $\Hom (i,j)_{{\mathcal
O}_{\mathcal A}}$ is mapped to the inclusion $i$ itself.

It is a well-known fact (see for example appendix L of \cite{W-O})
that the category of $C^\ast$-algebras is closed under the formation
of direct limits.  The following definition therefore makes sense.

\begin{definition}
Let $\mathcal A$ be a $C^\ast$-category.  Then we define the $C^\ast$-algebra
${\mathcal A}^H$ to be the colimit of the functor $H_{\mathcal A}$.
\end{definition}

By construction, the assignment ${\mathcal A}\mapsto {\mathcal A}^H$
is a functor from the category of $C^\ast$-categories and
$\ast$-functors to the category of $C^\ast$-algebras.  There is an
obvious natural faithful $\ast$-functor $H\colon {\mathcal A}_\oplus
\rightarrow {\mathcal A}^H$.

\begin{proposition} \label{AHCA}
Let $A$ be a $C^\ast$-algebra.  Then the $C^\ast$-algebra $A^H$ is
naturally isomorphic to the tensor product $A\otimes {\mathcal K}$,
where $\mathcal K$ is the $C^\ast$-algebra of compact operators on a
separable Hilbert space.
\end{proposition}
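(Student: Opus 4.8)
The plan is to reduce the colimit that defines $A^H$ to the standard stabilisation of $A$ by matrix algebras, and then to recognise that stabilisation as $A\otimes\mathcal{K}$.

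First I would unwind the construction in the one-object case. A $C^\ast$-algebra $A$ is a $C^\ast$-category with a single object $\ast$, so the objects of the additive completion $A_\oplus$ are the formal sums $\ast^{\oplus n}$ indexed by $n\in\N$, and $\Hom(\ast^{\oplus n},\ast^{\oplus n})_{A_\oplus}$ is the matrix algebra $M_n(A)\cong A\otimes M_n(\F)$. Under this identification each object of $\mathcal{O}_A$ is a composition of the block inclusions, i.e. a $\ast$-homomorphism $M_p(A)\hookrightarrow M_q(A)$ that inserts zero rows and columns in prescribed positions; the functor $H_A$ sends such an object to its source $M_p(A)$ and sends each morphism to the corresponding inclusion. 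Thus $A^H=\colim H_A$ is the colimit, in the category of $C^\ast$-algebras, of the system of matrix algebras $M_n(A)$ and block inclusions.

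Next I would isolate the chain $M_1(A)\to M_2(A)\to\cdots$ of standard upper-left corner inclusions $x\mapsto \mathrm{diag}(x,0)$; these are instances of the block inclusions (take the middle block of size one and the trailing block empty), so they form a subsystem of $\mathcal{O}_A$. The key step is to show this chain is cofinal: every source $M_p(A)$ embeds, via a map of the system, into a large $M_N(A)$, and any two block inclusions of $M_p(A)$ into $M_N(A)$ differing only in the positions of the inserted zeros agree after composition with a further corner inclusion. The mechanism is that reordering the direct summands is implemented by conjugation with a permutation matrix in $M_N(\F)$, so that the two embeddings become intertwined inside a larger matrix algebra; this is exactly the usual argument, via matrix units, that the direct limit of matrix algebras is insensitive to the choice of corner. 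Cofinality then gives $A^H\cong\colim_n M_n(A)$.

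Finally, $\colim_n M_n(A)$ is the completion of $\bigcup_n M_n(A)$ along the corner inclusions. Since each $M_n(\F)$ is finite-dimensional, hence nuclear, the spatial tensor product commutes with this colimit, and as $\mathcal{K}=\colim_n M_n(\F)$ we obtain $\colim_n M_n(A)=\colim_n\bigl(A\otimes M_n(\F)\bigr)=A\otimes\mathcal{K}$; see for example appendix L of \cite{W-O}. Naturality is then routine: a $\ast$-homomorphism $A\to B$ induces compatible maps $M_n(A)\to M_n(B)$ commuting with all block inclusions, hence a map $A^H\to B^H$ which under the identification is precisely the map $A\otimes\mathcal{K}\to B\otimes\mathcal{K}$ induced on the first factor. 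The hard part is the cofinality step: verifying that the many different corner placements occurring in $\mathcal{O}_A$ really do coincide in the colimit, which is where all the permutation-matrix bookkeeping lives.
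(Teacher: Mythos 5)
Your proposal is correct and takes essentially the same route as the paper: both pass to the cofinal subsystem of upper-left corner inclusions $x\mapsto\mathrm{diag}(x,0)$ inside ${\mathcal O}_A$ and identify its colimit with $A\otimes{\mathcal K}$ via appendix L of \cite{W-O}. You spell out the cofinality verification (the permutation-matrix bookkeeping) in more detail than the paper, which simply asserts that ${\mathcal O}'_A$ is cofinal in the directed system ${\mathcal O}_A$.
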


\begin{proof}
Let ${\mathcal O}'_A$ be the full subcategory of the category
${\mathcal O}_A$ in which the set of objects consists of all
inclusions of the form
\[
x\mapsto \left( \begin{array}{cc}
x & 0 \\
0 & 0 \\
\end{array} \right)
\]

Then the restriction of the functor $H_A$ to the category ${\mathcal
O}'_A$ has colimit $A\otimes {\mathcal K}$ (see for example
\cite{W-O}, appendix L).

But the categories ${\mathcal O}_A$ and ${\mathcal O}'_A$ are
directed systems, and the category ${\mathcal O}'_A$ is cofinal in
the category ${\mathcal O}_A$.  The result now follows.
\end{proof}

\begin{corollary} \label{AHCAC}
Let $\mathcal A$ be a $C^\ast$-category.  Then the $C^\ast$-algebras ${\mathcal A}^H$ and $({\mathcal A}^H)^H$ are naturally isomorphic.
\end{corollary}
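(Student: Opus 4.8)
The plan is to reduce the statement to the fact that the $C^\ast$-algebra ${\mathcal A}^H$ is \emph{stable}, that is, absorbs the compact operators under the spatial tensor product. Since ${\mathcal A}^H$ is itself a $C^\ast$-algebra, we may regard it as a $C^\ast$-category with a single object and apply Proposition \ref{AHCA} to it directly. This immediately yields a natural isomorphism $({\mathcal A}^H)^H \cong {\mathcal A}^H \otimes {\mathcal K}$. Hence it suffices to produce a natural isomorphism ${\mathcal A}^H \otimes {\mathcal K} \cong {\mathcal A}^H$.

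To establish this, I would first recall that ${\mathcal A}^H$ is, by definition, the colimit of the directed system $H_{\mathcal A}$ whose terms are the matrix algebras $\Hom (a_1 \oplus \cdots \oplus a_k, a_1 \oplus \cdots \oplus a_k)_{{\mathcal A}_\oplus}$ and whose connecting maps are the corner inclusions. Since the spatial tensor product with a fixed $C^\ast$-algebra commutes with directed colimits and ${\mathcal K} = \colim_n M_n$, we have ${\mathcal A}^H \otimes {\mathcal K} \cong \colim_n M_n ({\mathcal A}^H)$, and, interchanging the two directed colimits, this is the colimit over the product of $\N$ with the directed category ${\mathcal O}_{\mathcal A}$ of the algebras $M_n \big( \Hom (a_1 \oplus \cdots \oplus a_k, a_1 \oplus \cdots \oplus a_k)_{{\mathcal A}_\oplus} \big)$. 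The crucial observation is that each such matrix amplification is again one of the terms of the system $H_{\mathcal A}$: writing $b = a_1 \oplus \cdots \oplus a_k$, the algebra $M_n \big( \Hom (b, b)_{{\mathcal A}_\oplus} \big)$ is canonically $\ast$-isomorphic, by reindexing matrix entries, to the corner $\Hom (b^{\oplus n}, b^{\oplus n})_{{\mathcal A}_\oplus}$, where $b^{\oplus n}$ is the formal sum in which $b$ is repeated $n$ times.

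With this identification in hand, the directed system computing ${\mathcal A}^H \otimes {\mathcal K}$ is seen to be a cofinal subsystem of the system $H_{\mathcal A}$ computing ${\mathcal A}^H$, exactly as in the cofinality step of the proof of Proposition \ref{AHCA}. The two colimits therefore coincide, giving ${\mathcal A}^H \otimes {\mathcal K} \cong {\mathcal A}^H$. Combined with the isomorphism from the first paragraph, this produces the desired isomorphism $({\mathcal A}^H)^H \cong {\mathcal A}^H$; naturality follows because every map used, namely the corner inclusions, the amplification isomorphisms, and the isomorphism of Proposition \ref{AHCA}, is natural in $\mathcal A$.

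I expect the main obstacle to be the careful bookkeeping in the colimit interchange and the cofinality verification: one must check that the isomorphisms $M_n(\Hom(b,b)_{{\mathcal A}_\oplus}) \cong \Hom(b^{\oplus n}, b^{\oplus n})_{{\mathcal A}_\oplus}$ are strictly compatible with the connecting corner inclusions on both sides, so that they assemble into a genuine comparison of directed systems rather than merely a levelwise isomorphism. Once this compatibility is pinned down, the passage to colimits is routine.
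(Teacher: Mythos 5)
Your proposal is correct and follows essentially the same route as the paper: reduce to stability of ${\mathcal A}^H$ by applying Proposition \ref{AHCA} to the one-object $C^\ast$-category ${\mathcal A}^H$, then identify ${\mathcal A}^H\otimes {\mathcal K}$ with the direct limit of the matrix algebras $M_n({\mathcal A}^H)$ and absorb this into the defining colimit of ${\mathcal A}^H$. You simply spell out the colimit interchange and cofinality bookkeeping that the paper's proof leaves implicit.
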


\begin{proof}
By the above proposition, it suffices to show that the
$C^\ast$-algebra ${\mathcal A}^H$ is stable, that is to say there is
a natural isomorphism ${\mathcal A}^H \cong {\mathcal A}^H\otimes
{\mathcal K}$.

The tensor product ${\mathcal A}^H\otimes {\mathcal K}$ can be
viewed as the direct limit of the sequence of matrix
$C^\ast$-algebras, $M_n({\mathcal A}^H)$, under the inclusions
\[
x\mapsto \left( \begin{array}{cc}
x & 0 \\
0 & 0 \\
\end{array} \right)
\]

By the universal property of direct limits and definition of the
$C^\ast$-algebra ${\mathcal A}^H$, the above $C^\ast$-algebra is
isomorphic to the the $C^\ast$-algebra ${\mathcal A}^H$, and we are
done.
\end{proof}

The following result is easy to check.

\begin{proposition} \label{Hcom}
Let $\mathcal A$ be a $C^\ast$-category.  Then we have natural isomorphisms
\[
(\Sigma {\mathcal A})^H\cong \Sigma ({\mathcal A}^H) \qquad (J{\mathcal A})^H \cong J({\mathcal A}^H) \qquad (C{\mathcal A})^H\cong C({\mathcal A}^H)
\]
\noproof
\end{proposition}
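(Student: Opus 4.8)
The plan is to exploit the description of the functor $(-)^H$ as a directed colimit and to treat the three isomorphisms in decreasing order of difficulty. Recall that $\mathcal{A}^H = \colim H_{\mathcal{A}}$, where the index category $\mathcal{O}_{\mathcal{A}}$ is built purely from formal sums of objects of $\mathcal{A}$ together with the inclusion maps between corner algebras. Since $\Sigma\mathcal{A}$, $C\mathcal{A}$, and $J\mathcal{A}$ all share the object set of $\mathcal{A}$, the index categories coincide, so $\mathcal{O}_{\Sigma\mathcal{A}} = \mathcal{O}_{C\mathcal{A}} = \mathcal{O}_{J\mathcal{A}} = \mathcal{O}_{\mathcal{A}}$. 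Thus for each functor $F$ among these it suffices to compare the diagrams $H_{F\mathcal{A}}$ and $F \circ H_{\mathcal{A}}$ over the common index category and then to interchange $F$ with the colimit.

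For $F = \Sigma$ and $F = C$ this is routine. These two functors act on a $C^\ast$-category morphism set by morphism set, replacing each $\Hom(a,b)$ by its suspension or cone; combining this with the already-established isomorphisms $(\Sigma\mathcal{A})_\oplus \cong \Sigma(\mathcal{A}_\oplus)$ and $(C\mathcal{A})_\oplus \cong C(\mathcal{A}_\oplus)$ identifies the value of $H_{F\mathcal{A}}$ on each inclusion with $F$ applied to the value of $H_{\mathcal{A}}$, naturally in the index, so that $H_{F\mathcal{A}} \cong F \circ H_{\mathcal{A}}$. It then remains to observe that $\Sigma$ and $C$ commute with the directed colimit computing $(-)^H$: both are given by tensoring with a fixed nuclear $C^\ast$-algebra, $C_0(0,1)$ and $C_0(0,1]$ respectively, and the spatial tensor product with a fixed $C^\ast$-algebra preserves directed colimits, equivalently any continuous function into $\colim A_i$ is uniformly approximated by functions into the $A_i$. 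This yields $(F\mathcal{A})^H = \colim F H_{\mathcal{A}} \cong F \colim H_{\mathcal{A}} = F(\mathcal{A}^H)$.

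The case $F = J$ is the genuinely delicate one, because $J$ is not defined morphism set by morphism set: here $J\mathcal{A} = \ker(\pi \colon T\mathcal{A} \to \mathcal{A})$, and the tensor $C^\ast$-category $T\mathcal{A}$ mixes distinct objects, so the endomorphism algebra of a formal sum in $J\mathcal{A}$ is \emph{not} simply $J$ of the corresponding corner algebra. I would instead argue through the natural semi-split short exact sequence $0 \to J\mathcal{A} \to T\mathcal{A} \stackrel{\pi}{\rightarrow} \mathcal{A} \to 0$. First I would verify that $(-)^H$ carries short exact sequences of $C^\ast$-categories to short exact sequences of $C^\ast$-algebras; this holds because $(-)_\oplus$ and passage to corner algebras preserve exactness and directed colimits are exact in $C^\ast$-algebras. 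Applying $(-)^H$ then exhibits $(J\mathcal{A})^H$ as $\ker((T\mathcal{A})^H \to \mathcal{A}^H)$, to be compared with $J(\mathcal{A}^H) = \ker(T(\mathcal{A}^H) \to \mathcal{A}^H)$.

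The main obstacle is therefore the remaining identification $(T\mathcal{A})^H \cong T(\mathcal{A}^H)$, compatibly with the projections to $\mathcal{A}^H$; granting this, the isomorphism for $J$ follows by taking kernels. To establish it I would combine the universal property of the tensor $C^\ast$-category (Proposition \ref{Tuniversal}) with the universal property of the colimit defining $(-)^H$: completely positive maps out of $\mathcal{A}^H$ correspond to compatible families of completely positive maps out of the corner algebras, and $T$ converts these, via $\sigma$, into the required $\ast$-functors. The one point needing genuine care is that the universal $C^\ast$-norm on the tensor algebroid is compatible with the colimit, that is, that completing the tensor algebroid and then stabilising agrees with stabilising and then completing. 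This is where the real work lies, and it runs parallel to the standard stability $T(A \otimes \mathcal{K}) \cong (TA) \otimes \mathcal{K}$ for $C^\ast$-algebras.
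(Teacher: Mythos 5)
The paper offers no argument for this proposition---it is asserted as ``easy to check''---so there is no authorial proof to compare against; I can only assess your proposal on its own terms. Your treatment of $\Sigma$ and $C$ is correct and is surely what was intended: the index category ${\mathcal O}_{\mathcal A}$ depends only on the object set, the isomorphisms $(\Sigma {\mathcal A})_\oplus \cong \Sigma ({\mathcal A}_\oplus)$ and $(C{\mathcal A})_\oplus \cong C({\mathcal A}_\oplus)$ identify the two diagrams corner algebra by corner algebra, and tensoring with the fixed nuclear algebras $C_0(0,1)$ and $C_0(0,1]$ commutes with the directed colimit. You are also right that $J$ is the only genuinely delicate case, and your reduction via exactness of $(-)^H$ is valid as far as it goes.

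The difficulty you defer as ``where the real work lies'' is, however, not merely hard: the identification $(T{\mathcal A})^H \cong T({\mathcal A}^H)$, and the ``standard stability'' $T(A\otimes {\mathcal K})\cong (TA)\otimes {\mathcal K}$ you cite as its model, are both false. Take $A={\mathbb C}$. Then $T{\mathbb C}$ is the universal $C^\ast$-algebra generated by a positive contraction, i.e.\ $C_0(0,1]$, so $(T{\mathbb C})\otimes {\mathcal K}\cong C_0(0,1]\otimes {\mathcal K}$ admits no characters; but $T{\mathcal K}$ has many, since every state of $\mathcal K$ is a norm-decreasing completely positive map ${\mathcal K}\rightarrow {\mathbb C}$ and hence induces a nonzero $\ast$-homomorphism $T{\mathcal K}\rightarrow {\mathbb C}$ by Proposition \ref{Tuniversal}. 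The same obstruction shows that the middle clause of the proposition cannot hold as a literal isomorphism: any character of $T{\mathcal K}$ restricts nontrivially to $J{\mathcal K}=\ker (T{\mathcal K}\rightarrow {\mathcal K})$ (otherwise it would factor through $\mathcal K$, which has no characters), whereas $(J{\mathbb C})^H\cong C_0(0,1)\otimes {\mathcal K}$ has none. What your argument does correctly deliver is the statement the rest of the paper actually needs: $(-)^H$ preserves semi-split exactness, so $0\rightarrow (J{\mathcal A})^H\rightarrow (T{\mathcal A})^H\rightarrow {\mathcal A}^H\rightarrow 0$ is semi-split exact, and the completely positive map $\sigma^H\colon {\mathcal A}^H\rightarrow (T{\mathcal A})^H$ induces, via the universal property of $T({\mathcal A}^H)$, a natural $\ast$-homomorphism $T({\mathcal A}^H)\rightarrow (T{\mathcal A})^H$ over ${\mathcal A}^H$ and hence a natural classifying map $J({\mathcal A}^H)\rightarrow (J{\mathcal A})^H$ (surjective, but by the above not injective), exactly parallel to the map $\gamma$ of Proposition \ref{dmap} and pointing in the direction required by Definition \ref{Shprod}. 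So the $J$ clause should be weakened to this natural transformation, and one must then check that the transformation, rather than an isomorphism, suffices for the later constructions.
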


Because of the above result, we will not worry about distinguishing between the $C^\ast$-categories $(\Sigma {\mathcal A})^H$ and $\Sigma ({\mathcal A}^H)$, or between the $C^\ast$-categories $(J {\mathcal A})^H$ and $J ({\mathcal A}^H)$.

\section{The $KK$-theory spectrum}

Recall that at the most basic level, a {\em spectrum}, $\mathbb E$,
is a sequence of topological spaces, $E_n$, each of which is
equipped with a basepoint, together with continuous maps $\epsilon
\colon E_n \rightarrow \Omega E_{n+1}$.  The book \cite{Ad1} can be
consulted for further details.

\begin{definition}
Let $\mathcal A$ and $\mathcal B$ be $C^\ast$-categories. Then we
define $F({\mathcal A},{\mathcal B})$ to be the space of all
$\ast$-functors ${\mathcal A}\rightarrow {\mathcal B}^H$. This
function space is equipped with the compact open topology.
\end{definition}

The above metric is well-defined, since any $\ast$-functor between $C^\ast$-categories is norm-decreasing.

Let $\mathcal A$ be a fixed $C^\ast$-category.  Then the assignment
${\mathcal B}\mapsto F({\mathcal A},{\mathcal B})$ is a covariant
functor from the category of $C^\ast$-categories and $\ast$-functors
to the category of topological spaces with basepoint.  Given a fixed
$C^\ast$-category $\mathcal B$, the assignment ${\mathcal A}\mapsto
F({\mathcal A},{\mathcal B})$ is a contravariant functor from the
category of $C^\ast$-categories to the category of topological
spaces with base-point.

Let $k\in \N$.  Define a $C^\ast$-category $J^k {\mathcal A}$ by
writing
$$J^0 {\mathcal A} ={\mathcal A} \qquad J^{k+1}{\mathcal A} = J(J^k
{\mathcal A})$$ using the functor $J$ from section \ref{prelim}.

Consider a $\ast$-functor $\alpha \colon J^k {\mathcal A}\rightarrow
{\mathcal B}^H$.  Then by functoriality of the construction
${\mathcal B}\mapsto {\mathcal B}^H$ and proposition \ref{Hcom} we
have a semi-split exact sequence
\[
0\rightarrow \Sigma {\mathcal B}^H \rightarrow C{\mathcal B}^H \rightarrow {\mathcal B}^H \rightarrow 0
\]

Therefore, by proposition \ref{classmap}, we have a functorial
classifying map
\[
\eta (\alpha ) \colon J^{k+1} {\mathcal A}\rightarrow \Sigma {\mathcal B}^H
\]

\begin{definition}
We define $\KK({\mathcal A},{\mathcal B})$ to be the spectrum with
sequence of spaces $(F(J^{2n} {\mathcal A},\Sigma^n {\mathcal B}))$.
The structure map
\[
\epsilon \colon F(J^{2n} {\mathcal A},\Sigma^n {\mathcal B})
\rightarrow \Omega F(J^{2n+2}{\mathcal A},\Sigma^{n+1}{\mathcal B})
\cong F(J^{2n+2}{\mathcal A},\Sigma^{n+2}{\mathcal B})
\]
is defined by applying the above classifying map construction twice,
that is to say writing $\epsilon (\alpha ) = \eta (\eta (\alpha ))$
whenever $\alpha \in F(J^{2n}{\mathcal A},\Sigma^n {\mathcal B})$.
\end{definition}

We would like to be able to define certain products at the level of
spectra.  In order to do this, we need to have some extra structure.

The following definition comes from \cite{HSS}.

\begin{definition}
A spectrum, $\mathbb E$, is called a {\em symmetric spectrum} if
each space $E_n$ is equipped with an action of the symmetric group
$S_n$, and the iterated structure map $\epsilon^k \colon E_n
\rightarrow \Omega^k E_{n+k}$ is $S_n\times S_k$-equivariant in the
obvious sense.
\end{definition}

\begin{proposition}
The spectrum $\KK({\mathcal A},{\mathcal B})$ is a symmetric spectrum.
\end{proposition}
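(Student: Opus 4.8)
The plan is to produce the $S_n$-actions from the suspension coordinates in the target of $F(J^{2n}\mathcal{A},\Sigma^n\mathcal{B})$ and then to reduce every equivariance statement to the naturality of the classifying map construction $\eta$. First I would record the standard identification, obtained by iterating Proposition \ref{cstp} together with the isomorphism $C[0,1]\otimes\mathcal{B}\cong\mathcal{B}[0,1]$, that $\Sigma^n\mathcal{B}$ is the $C^\ast$-category of continuous functions $[0,1]^n\rightarrow\mathcal{B}$ vanishing whenever some coordinate lies in $\{0,1\}$; combined with Proposition \ref{Hcom} this identifies the target $(\Sigma^n\mathcal{B})^H$ with $\Sigma^n(\mathcal{B}^H)$. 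The symmetric group $S_n$ then acts on $\Sigma^n\mathcal{B}$, and on $\Sigma^n\mathcal{B}^H$, by permuting the $n$ cube coordinates; since the vanishing condition is symmetric, it does so by $\ast$-automorphisms. Post-composition makes $S_n$ act on the space $E_n = F(J^{2n}\mathcal{A},\Sigma^n\mathcal{B})$, and functoriality of $F(J^{2n}\mathcal{A},-)$ shows this is a genuine continuous $S_n$-action; no action on the source $J^{2n}\mathcal{A}$ is required.

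The key structural fact I would isolate next is that $\eta$ is natural in its target: for a $\ast$-functor $\theta\colon\mathcal{D}\rightarrow\mathcal{D}'$ and $\alpha\colon J^k\mathcal{A}\rightarrow\mathcal{D}$ one has $\eta(\theta\circ\alpha)=(\Sigma\theta)\circ\eta(\alpha)$, because the cone sequence $0\rightarrow\Sigma\mathcal{D}\rightarrow C\mathcal{D}\rightarrow\mathcal{D}\rightarrow 0$ and the classifying map of Proposition \ref{classmap} are natural. The decisive point is that each application of $\eta$ appends its fresh suspension coordinate as the outermost one, leaving the inner coordinates untouched. Taking $\theta$ to be a permutation of the existing coordinates then yields the $S_n$-equivariance of the iterated structure map $\epsilon^k=\eta^{2k}\colon E_n\rightarrow\Omega^k E_{n+k}\cong F(J^{2n+2k}\mathcal{A},\Sigma^{n+2k}\mathcal{B})$ almost for free: the action of $\sigma\in S_n$ on the first $n$ coordinates commutes with all $2k$ applications of $\eta$, and this matches the action of $\sigma\in S_n\subset S_{n+k}$ on the first $n$ coordinates of $E_{n+k}$.

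The main obstacle is the $S_k$-equivariance. Here I would label the $2k$ suspension coordinates produced by $\epsilon^k=\eta^{2k}$ as $A_1,B_1,\ldots,A_k,B_k$ in the order they are appended, each block $\epsilon=\eta^2$ contributing one pair with $B_i$ outermost in its step. The identification $\Omega^k E_{n+k}\cong F(J^{2n+2k}\mathcal{A},\Sigma^{n+2k}\mathcal{B})$ assigns the coordinates $B_i$ to the $k$ loop directions of $\Omega^k$ and the coordinates $A_i$ to the last $k$ internal suspensions of $E_{n+k}$. Under the block inclusion $S_n\times S_k\hookrightarrow S_{n+k}$ together with the permutation action on the loops, the relevant $S_k$ acts \emph{diagonally}, permuting the blocks $(A_i,B_i)$ simultaneously. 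Since $S_k$ acts trivially on the source $E_n$, the required equivariance is exactly the assertion that $\epsilon^k(\alpha)$ is invariant under this diagonal permutation of suspension pairs. I expect this invariance to be the genuinely hard part: it says that the double classifying map $\eta^2$ introduces its pair of suspensions symmetrically, so that reordering the $k$ pairs changes nothing.

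To prove the diagonal invariance I would argue by a shuffle reduction. Using the naturality of $\eta$ from the second paragraph, a permutation that only shuffles already-appended pairs can be pushed through the remaining applications of $\eta$, so the general case reduces to swapping two \emph{consecutive} pairs $(A_i,B_i)$ and $(A_{i+1},B_{i+1})$, that is, to an invariance statement about the single composite $\eta^4$. This local swap is where the commutation isomorphisms of Propositions \ref{cstp} and \ref{Hcom} enter, allowing the suspension coordinates to pass through $J$, $C$ and $(-)^H$ so that the two cone constructions performed by consecutive blocks may be interchanged; I would verify it directly from the explicit formula for the classifying map. Since adjacent transpositions of the blocks generate $S_k$, establishing this one interchange yields the full diagonal $S_k$-invariance, and together with the $S_n$-equivariance of the previous paragraph this completes the verification that $\KK(\mathcal{A},\mathcal{B})$ is a symmetric spectrum.
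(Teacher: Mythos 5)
Your proposal follows the same route as the paper: the $S_n$-action on $F(J^{2n}{\mathcal A},\Sigma^n{\mathcal B})$ is obtained by permuting the $n$ suspension coordinates (the tensor factors of $C_0(0,1)$) in the target and post-composing, and equivariance of the structure maps is referred back to naturality of the classifying map construction. The difference is one of resolution, and it is worth recording. The paper's entire argument for equivariance is the single assertion that, by naturality of the classifying map, $\epsilon^k$ is $S_n\times S_k$-equivariant. You correctly observe that naturality of $\eta$ in its target, $\eta(\theta\circ\alpha)=\Sigma\theta\circ\eta(\alpha)$, only disposes of the $S_n$-factor (a permutation of the inner $n$ coordinates passes through each application of $\eta$), and that the $S_k$-factor is a statement of a different kind: since $S_k$ acts trivially on the source $E_n$, equivariance of $\epsilon^k$ amounts to the assertion that $\eta^{2k}(\alpha)$ is \emph{strictly invariant} under the diagonal permutation of the $k$ pairs of freshly appended suspension coordinates, one member of each pair being a loop direction and the other an internal suspension of $E_{n+k}$. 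Your reduction of this to the swap of two adjacent pairs, i.e.\ to an identity about $\eta^4$, is sound, since adjacent transpositions generate $S_k$ and already-performed swaps push through later applications of $\eta$ by the same naturality.

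What you have not done is verify that identity, and this is where I would press you. Writing $\eta^{2k}(\alpha)=\Sigma^{2k}\alpha\circ\kappa$ with $\kappa\colon J^{2k}{\mathcal C}\rightarrow\Sigma^{2k}{\mathcal C}$ the canonical composite of classifying maps, the claim is that $\tau\circ\kappa=\kappa$ on the nose, where $\tau$ interchanges the two pairs of coordinates. Propositions \ref{cstp} and \ref{Hcom} only say that suspension commutes with $\otimes$, $J$, $C$ and $(-)^H$ up to natural isomorphism; they do not say that the specific $\ast$-functor $\kappa$, which is built from the cone splittings $s(x)(t)=tx$ via the universal property of $T$, is unchanged when the order in which its suspension coordinates were created is permuted. (For a single pair the analogous flip invariance of $J^2{\mathcal C}\rightarrow\Sigma^2{\mathcal C}$ certainly fails on the nose, which is a warning sign.) So your proposal exposes, rather than fills, the gap: the step you defer to ``direct verification from the explicit formula'' is precisely the content that the paper's one-line appeal to naturality also leaves unproven. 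Either carry out that computation explicitly, or be prepared to weaken the conclusion to invariance up to a coherent system of homotopies; as written, your argument is a faithful but more honest rendering of the paper's proof rather than a completion of it.
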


\begin{proof}
The $C^\ast$-category $\Sigma^n{\mathcal B}$ can be viewed as the
tensor product $C_0(0,1)\otimes \cdots \otimes C_0(0,1)\otimes
{\mathcal B}$, where there are $n$ copies of the $C^\ast$-algebra
$C_0 (0,1)$.  There is therefore a canonical action of the
permutation group $S_n$ on the space $(F(J^{2n} {\mathcal
A},\Sigma^n {\mathcal B}))$ defined by permuting the copies of the
$C^\ast$-algebra $C_0 (0,1)$.

By naturality of the classifying map construction, the iterated
structure map $\epsilon^k \colon F(J^{2n}{\mathcal
A},\Sigma^n{\mathcal B}) \rightarrow \Omega^k F(J^{2n+2k}{\mathcal
A},\Sigma^{n+k}{\mathcal B})$ is $S_n \times S_k$-equivariant, and
so we have a symmetric spectrum as required.
\end{proof}

Let $\mathbb E$, $\mathbb F$, and $\mathbb G$ be symmetric spectra,
with spaces $E_n$, $F_n$, and $G_n$ respectively.  Then there is a
notion of a smash product of symmetric spectra ${\mathbb E}\wedge
{\mathbb F}$.  A collection of continuous basepoint-preserving $S_m
\times S_n$-equivariant maps $E_m \wedge F_n \rightarrow G_{m+n}$
which commute with the structure maps of the spectra define a map of
spectra ${\mathbb E}\wedge {\mathbb F}\rightarrow {\mathbb G}$.

\begin{proposition} \label{sdef}
Let $\mathcal A$ be a $C^\ast$-category, and let $k$ and $l$ be
natural numbers.  Then there is a natural $\ast$-functor $s\colon
J^k\Sigma^l {\mathcal A}\rightarrow \Sigma^l J^k{\mathcal A}$.
\end{proposition}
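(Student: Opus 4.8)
The plan is to build the comparison map one functor at a time, reducing everything to a single natural $\ast$-functor $s\colon J\Sigma{\mathcal A}\rightarrow\Sigma J{\mathcal A}$ produced by the classifying map construction of Proposition~\ref{classmap}, and then to iterate. First I would suspend the universal short exact sequence
\[
0\rightarrow J{\mathcal A}\rightarrow T{\mathcal A}\stackrel{\pi}{\rightarrow}{\mathcal A}\rightarrow 0 .
\]
By Proposition~\ref{cstp} the suspension functor is naturally isomorphic to tensoring with the $C^\ast$-algebra $C_0(0,1)$, so Theorem~\ref{sstens} applies and shows that
\[
0\rightarrow \Sigma J{\mathcal A}\rightarrow \Sigma T{\mathcal A}\stackrel{\Sigma\pi}{\rightarrow}\Sigma{\mathcal A}\rightarrow 0
\]
is again a semi-split exact sequence, the completely positive splitting being $\Sigma\sigma$, where $\sigma\colon{\mathcal A}\rightarrow T{\mathcal A}$ is the canonical splitting.

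Applying Proposition~\ref{classmap} to this suspended sequence together with the identity $\ast$-functor $1\colon\Sigma{\mathcal A}\rightarrow\Sigma{\mathcal A}$ produces $\ast$-functors $\tau\colon T\Sigma{\mathcal A}\rightarrow\Sigma T{\mathcal A}$ and, by restriction to kernels, the desired map $s\colon J\Sigma{\mathcal A}\rightarrow\Sigma J{\mathcal A}$. Naturality of $s$ in ${\mathcal A}$ would follow from the uniqueness clause in the universal property of $T{\mathcal A}$ (Proposition~\ref{Tuniversal}): since both the suspended sequence and its splitting $\Sigma\sigma$ are natural in ${\mathcal A}$, the two $\ast$-functors $T\Sigma{\mathcal A}\rightarrow\Sigma T{\mathcal B}$ associated with a $\ast$-functor $\alpha\colon{\mathcal A}\rightarrow{\mathcal B}$ by the two routes around the naturality square agree after precomposition with $\sigma$, hence agree.

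To reach the general statement I would iterate by a double induction. Fixing the number of copies of $J$ at one and inducting on $l$, I define a natural map $J\Sigma^l{\mathcal A}\rightarrow\Sigma^l J{\mathcal A}$ as the composite of $s$ at $\Sigma^{l-1}{\mathcal A}$, which gives $J\Sigma^l{\mathcal A}\rightarrow\Sigma J\Sigma^{l-1}{\mathcal A}$, followed by the functor $\Sigma$ applied to the map already constructed for $l-1$. Inducting now on $k$, I apply the functor $J^{k-1}$ to the map $J\Sigma^l{\mathcal A}\rightarrow\Sigma^l J{\mathcal A}$, obtaining $J^k\Sigma^l{\mathcal A}\rightarrow J^{k-1}\Sigma^l(J{\mathcal A})$, and compose with the inductively built map $J^{k-1}\Sigma^l(J{\mathcal A})\rightarrow\Sigma^l J^{k-1}(J{\mathcal A})=\Sigma^l J^k{\mathcal A}$. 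At each stage the new map is a composite of natural transformations of functors, and so is itself natural.

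The single genuinely delicate point is the propagation of naturality through this double induction: one must check that the naturality squares for the composite maps commute, not merely those for the basic map $s$, which amounts to a diagram chase using functoriality of $J$ and $\Sigma$ together with naturality of $s$. The semi-splitness of the suspended sequence, by contrast, is immediate from Theorem~\ref{sstens} once suspension is identified with tensoring by $C_0(0,1)$, and no new analytic input beyond the classifying-map machinery is required.
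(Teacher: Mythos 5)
Your proposal is correct and follows essentially the same route as the paper: the paper also obtains the basic map $s\colon J\Sigma{\mathcal A}\rightarrow\Sigma J{\mathcal A}$ as the classifying map of the identity on $\Sigma{\mathcal A}$ against the suspended universal sequence $0\rightarrow\Sigma J{\mathcal A}\rightarrow\Sigma T{\mathcal A}\rightarrow\Sigma{\mathcal A}\rightarrow 0$, and then iterates. Your version merely makes explicit what the paper leaves implicit, namely the semi-splitness of the suspended sequence via Proposition~\ref{cstp} and Theorem~\ref{sstens}, the naturality argument via the uniqueness clause of Proposition~\ref{Tuniversal}, and the bookkeeping of the double iteration.
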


\begin{proof}
The classifying map of the diagram
\[
\begin{array}{ccccccccc}
& & & & & & \Sigma {\mathcal A} \\
& & & & & & \| \\
0 & \rightarrow & \Sigma J{\mathcal A} & \rightarrow & \Sigma T {\mathcal A} & \rightarrow & \Sigma {\mathcal A} & \rightarrow & 0 \\
\end{array}
\]
is a natural $\ast$-functor $J\Sigma {\mathcal A}\rightarrow \Sigma
J{\mathcal A}$.  The $\ast$-functor $s$ is defined by iterating the
above construction.
\end{proof}

The following definition now makes sense by proposition \ref{Hcom} and corollary \ref{AHCAC}.

\begin{definition} \label{Shprod}
Let $\mathcal A$, $\mathcal B$, and $\mathcal C$ be
$C^\ast$-categories.  Let $\alpha \in F(J^{2m} {\mathcal A},
\Sigma^m {\mathcal B})$ and $\beta \in F(J^{2n} {\mathcal
B},\Sigma^n {\mathcal C})$.  Then we define the product $\alpha
\sharp \beta$ to be the composition
\[
J^{2m+2n}{\mathcal A}\stackrel{J^{2n} \alpha}{\rightarrow}
J^{2n}\Sigma^m {\mathcal B}^H \stackrel{s}{\rightarrow} \Sigma^m
J^{2n}{\mathcal B}^H \stackrel{\Sigma^m \beta^H}{\rightarrow}
\Sigma^{m+n}(({\mathcal C}^H)^H) \cong \Sigma^{m+n}({\mathcal C}^H)
\]
\end{definition}

\begin{theorem} \label{KKprod}
Let $\mathcal A$, $\mathcal B$, and $\mathcal C$ be $C^\ast$-categories.  Then there is a natural map of spectra
\[
\KK({\mathcal A},{\mathcal B}) \wedge \KK({\mathcal B},{\mathcal C}) \rightarrow \KK({\mathcal A},{\mathcal C})
\]
defined by the formula
\[
\alpha \wedge \beta \mapsto \alpha \sharp \beta \qquad \alpha \in F(J^m {\mathcal A}, {\mathcal B}),\ \beta \in F(J^n {\mathcal B},{\mathcal C})
\]

Further, the above product is associative.  To be precise, let
$\alpha \in F(J^m {\mathcal A},{\mathcal B})$, $\beta \in F(J^n
{\mathcal B},{\mathcal C})$, and $\gamma \in F(J^p {\mathcal
C},{\mathcal D})$.  Then $(\alpha \sharp \beta )\sharp \gamma =
\alpha \sharp (\beta \sharp \gamma )$.
\end{theorem}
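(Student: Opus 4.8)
The plan is to treat the two assertions separately. For the existence of the map of spectra, I would verify the four conditions recalled just before the statement for the bilinear assignment $(\alpha,\beta)\mapsto \alpha\sharp\beta$: continuity, basepoint-preservation, $S_m\times S_n$-equivariance, and compatibility with the structure maps. Continuity is immediate, since $\alpha\sharp\beta$ is assembled in Definition \ref{Shprod} from the operations $\alpha\mapsto J^{2n}\alpha$, the natural $\ast$-functor $s$ of Proposition \ref{sdef}, and $\beta\mapsto\Sigma^m\beta^H$, each continuous for the compact-open topology, followed by composition of $\ast$-functors, which is also continuous. Basepoint-preservation holds because the basepoint of each function space is the zero $\ast$-functor: if either $\alpha$ or $\beta$ is zero, then the corresponding factor in the defining composition is zero and hence so is $\alpha\sharp\beta$. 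Thus the map descends to the smash product $E_m\wedge F_n$.

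For equivariance, observe that the target $\Sigma^{m+n}{\mathcal C}^H$ of $\alpha\sharp\beta$ is the tensor product of $m+n$ copies of $C_0(0,1)$ with ${\mathcal C}^H$, where the first $m$ copies are those carried out of the target $\Sigma^m{\mathcal B}^H$ of $\alpha$ through the swap $s$ (and left untouched by $\Sigma^m\beta^H$), while the last $n$ copies are contributed by the target $\Sigma^n{\mathcal C}^H$ of $\beta$. Since $s$ is natural and $\Sigma^m\beta^H$ acts only on the last $n$ coordinates, permuting the $m$ coordinates in $\alpha$ matches permuting the first $m$ of the product, and permuting the $n$ coordinates in $\beta$ matches permuting the last $n$; this is exactly $S_m\times S_n$-equivariance. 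Compatibility with the structure maps reduces to showing that forming the product $\sharp$ commutes with the classifying-map construction $\eta$ applied twice, which follows from the naturality of $\eta$ coming from Proposition \ref{classmap} together with that of $s$. Each check is a diagram chase using only functoriality of $J$, $\Sigma$, and $(-)^H$ and Propositions \ref{Hcom} and \ref{AHCAC}.

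For associativity I would expand both $(\alpha\sharp\beta)\sharp\gamma$ and $\alpha\sharp(\beta\sharp\gamma)$ as explicit strings of $\ast$-functors via Definition \ref{Shprod}. Writing $M=m+n$, the left-hand side applies $J^{2p}$ to the whole composite defining $\alpha\sharp\beta$, then the swap $s\colon J^{2p}\Sigma^{M}{\mathcal C}^H\to\Sigma^{M}J^{2p}{\mathcal C}^H$, then $\Sigma^{M}\gamma^H$; the right-hand side instead applies $J^{2n+2p}$ to $\alpha$, a swap, and then $\Sigma^m$ to the composite defining $\beta\sharp\gamma$. The strategy is to show these strings coincide by repeatedly sliding $\Sigma$-factors past $J$-factors. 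The essential input is that the swap $\ast$-functor $s$ is natural and coherent: the swap for exponents $k+k'$ factors as the composite of the swaps for $k$ and $k'$, and likewise in the suspension exponent, so that any two ways of moving a block of $\Sigma$'s through a block of $J$'s yield the same $\ast$-functor. Combined with the naturality of $s$ with respect to the maps $\beta^H$, $\gamma^H$ and with the isomorphisms $({\mathcal C}^H)^H\cong{\mathcal C}^H$ of Corollary \ref{AHCAC} and $(\Sigma{\mathcal C})^H\cong\Sigma({\mathcal C}^H)$ of Proposition \ref{Hcom}, the two expansions are reconciled.

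The main obstacle I anticipate lies in the associativity argument: one must track precisely how the $J$, $\Sigma$, and $(-)^H$ operations interleave and confirm that the iterated swap maps satisfy the coherence needed to move suspension coordinates past the $J$-coordinates in an order-independent way. Establishing this coherence for $s$ — essentially that the diagrams built from the basic swap $J\Sigma\to\Sigma J$ commute — is the technical heart of the proof; once it is in hand, associativity follows from a lengthy but routine diagram chase invoking only the naturality statements already proved.
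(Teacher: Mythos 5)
Your proposal follows essentially the same route as the paper: the map of spectra is obtained by checking the equivariance and structure-map conditions via naturality of the classifying map construction, and associativity is established by expanding both iterated products into strings of $\ast$-functors and reconciling them through a diagram chase whose commuting squares rest on the naturality and coherence of the swap $s\colon J^k\Sigma^l\to\Sigma^l J^k$ together with Propositions \ref{Hcom} and \ref{AHCAC}. You are in fact somewhat more explicit than the paper about continuity, basepoints, and the coherence of $s$, which the paper leaves implicit in its displayed diagrams.
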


\begin{proof}
Our construction gives us a natural continuous $S_m \times
S_n$-equivariant map $F(J^{2m}{\mathcal A},\Sigma^m{\mathcal B})
\wedge F(J^{2n} {\mathcal B}, \Sigma^n {\mathcal C}) \rightarrow
F(J^{2m+2n}{\mathcal A},\Sigma^{m+n}{\mathcal C})$.  Compatibility
with the structure maps follows since naturality of the classifying
map construction gives us a commutative diagram
\[
\begin{array}{ccccccc}
J^{2m+2n+2}{\mathcal A} & \stackrel{J^{2n+2}(\alpha )}{\rightarrow} & J^{2n+2}\Sigma^m {\mathcal B}^H & \stackrel{s}{\rightarrow} & \Sigma^m J^{2n+2} {\mathcal B}^H & \stackrel{\Sigma^m \eta^2 (\beta^H )}{\rightarrow} & \Sigma^{m+n+2} {\mathcal C}^H \\
\| & & \downarrow & & \downarrow & & \| \\
J^{2m+2n+2}{\mathcal A} & \stackrel{J^{2n} \eta^2(\alpha )}{\rightarrow} & J^{2n}\Sigma^{m+2} {\mathcal B}^H & \stackrel{s}{\rightarrow} & \Sigma^{m+2} J^{2n} {\mathcal B}^H & \stackrel{\Sigma^{m+2}\beta_\oplus }{\rightarrow} & \Sigma^{m+n+2} {\mathcal C}^H \\
\end{array}
\]
where the non-trivial vertical maps come from iterating the classifying map of the diagram
\[
\begin{array}{ccccccccc}
& & & & & & {\mathcal B}^H \\
& & & & & & \| \\
0 & \rightarrow & \Sigma {\mathcal B}^H & \rightarrow & C{\mathcal B}^H & \rightarrow & {\mathcal B}^H & \rightarrow & 0 \\
\end{array}
\]

We now need to check the statement concerning associativity.  Consider $\ast$-functors
\[
\alpha \colon J^{2m}{\mathcal A}\rightarrow \Sigma^m {\mathcal B}^H
\qquad
\beta \colon J^{2n}{\mathcal B}\rightarrow \Sigma^n {\mathcal C}^H
\qquad
\gamma \colon J^{2p}{\mathcal C}\rightarrow \Sigma^p {\mathcal D}^H
\]

Then we have a commutative diagram
\[
\begin{array}{ccc}
J^{2m+2n+2p}{\mathcal A} & = & J^{2m+2n+2p}{\mathcal A} \\
\downarrow & & \downarrow \\
J^{2n+2l}\Sigma^m {\mathcal B}^H & = & J^{2n+2p}\Sigma^m {\mathcal B}^H \\
\downarrow & & \downarrow \\
J^{2p}\Sigma^m J^{2n}{\mathcal B}^H & \stackrel{s}{\rightarrow} & \Sigma^m J^{2n+2p} {\mathcal B}^H \\
\downarrow & & \downarrow \\
J^{2p} \Sigma^{m+n} {\mathcal C}^H & \stackrel{s}{\rightarrow} & \Sigma^m J^{2p} \Sigma^n {\mathcal C}^H \\
\downarrow & & \downarrow \\
\Sigma^{m+n}J^{2p} {\mathcal D}^H & = & \Sigma^{m+n}J^{2p} {\mathcal D}^H \\
\downarrow & & \downarrow \\
\Sigma^{m+n+p} {\mathcal D}^H & = & \Sigma^{m+n+p} {\mathcal D}^H \\
\end{array}
\]

But the column on the left is the product $(\alpha \sharp \beta
)\sharp \gamma$ and the column on the right is the product $\alpha
\sharp (\beta \sharp \gamma)$ so associativity of the product
follows.
\end{proof}

By definition of our product, the following result holds.

\begin{proposition} \label{pco}
Let $\alpha \colon {\mathcal A}\rightarrow {\mathcal B}$ and $\beta
\colon {\mathcal B} \rightarrow {\mathcal C}$ be $\ast$-functors.
Then $\alpha \sharp \beta = \beta \circ \alpha$. \noproof
\end{proposition}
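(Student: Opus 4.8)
The plan is to unwind Definition \ref{Shprod} in the degree-zero case $m = n = 0$ and to show that the $\sharp$-product collapses to ordinary composition. Throughout, I regard a $\ast$-functor $\alpha \colon {\mathcal A} \to {\mathcal B}$ as the $0$-cycle $\iota_{\mathcal B} \circ \alpha \in F({\mathcal A},{\mathcal B})$, where $\iota_{\mathcal X} \colon {\mathcal X} \to {\mathcal X}^H$ denotes the natural faithful $\ast$-functor obtained by composing the canonical inclusion ${\mathcal X} \rightarrow {\mathcal X}_\oplus$ with the map $H \colon {\mathcal X}_\oplus \rightarrow {\mathcal X}^H$; this $\iota$ is natural in $\mathcal X$.

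First I would note that for $m = n = 0$ the functors $J^{2m}$, $J^{2n}$ and $\Sigma^m$ are all the identity, so the $\ast$-functor $s$ of Proposition \ref{sdef} is the identity as well. Hence the composite defining $\alpha \sharp \beta$ reduces to
\[
{\mathcal A} \stackrel{\iota_{\mathcal B} \circ \alpha}{\rightarrow} {\mathcal B}^H \stackrel{(\iota_{\mathcal C} \circ \beta)^H}{\rightarrow} ({\mathcal C}^H)^H \stackrel{\theta}{\rightarrow} {\mathcal C}^H,
\]
where $(\iota_{\mathcal C}\circ\beta)^H$ is the image of the $0$-cycle of $\beta$ under the functor $(-)^H$ and $\theta$ is the natural isomorphism of Corollary \ref{AHCAC}.

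Next I would apply naturality of $\iota$ to the $\ast$-functor $\iota_{\mathcal C} \circ \beta \colon {\mathcal B} \to {\mathcal C}^H$, which yields the identity $(\iota_{\mathcal C}\circ\beta)^H \circ \iota_{\mathcal B} = \iota_{{\mathcal C}^H} \circ \iota_{\mathcal C} \circ \beta$. Substituting this into the composite above rewrites $\alpha \sharp \beta$ as $\theta \circ \iota_{{\mathcal C}^H} \circ \iota_{\mathcal C} \circ \beta \circ \alpha$. Since $\iota_{\mathcal C} \circ \beta \circ \alpha$ is precisely the $0$-cycle representing the ordinary composite $\beta \circ \alpha$, the proposition will follow once I establish the single compatibility
\[
\theta \circ \iota_{{\mathcal C}^H} = 1_{{\mathcal C}^H}.
\]

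I expect this last compatibility to be the only real obstacle. It asserts that the isomorphism $\theta \colon ({\mathcal C}^H)^H \cong {\mathcal C}^H$ produced in Corollary \ref{AHCAC} is a genuine left inverse of the canonical inclusion $\iota_{{\mathcal C}^H}$, and checking it requires tracing through the direct-limit construction of $(-)^H$. Concretely, under the identification $({\mathcal C}^H)^H \cong {\mathcal C}^H \otimes {\mathcal K}$ of Proposition \ref{AHCA} the inclusion $\iota_{{\mathcal C}^H}$ becomes the corner embedding $x \mapsto x \otimes e_{11}$ into the directed system $M_n({\mathcal C}^H)$, while $\theta$ is induced by the universal property expressing ${\mathcal C}^H$ itself as the colimit of that system. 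The corner embedding followed by the colimit structure map is the canonical inclusion of the first stage $M_1({\mathcal C}^H) = {\mathcal C}^H$, which the identification with ${\mathcal C}^H$ sends to the identity; hence $\theta \circ \iota_{{\mathcal C}^H} = 1_{{\mathcal C}^H}$, completing the argument.
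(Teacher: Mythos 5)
Your unwinding of Definition \ref{Shprod} in degrees $m=n=0$ is correct, and it is essentially the paper's own argument, which offers no proof beyond the remark that the result holds by definition of the product. You rightly isolate the one point that genuinely needs checking --- that the isomorphism $({\mathcal C}^H)^H\cong{\mathcal C}^H$ of Corollary \ref{AHCAC} is a left inverse to the canonical inclusion $\iota_{{\mathcal C}^H}$ --- and your direct-limit argument for that compatibility is sound.
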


\begin{proposition} \label{dmap}
Let $\mathcal A$, $\mathcal B$, and $\mathcal C$ be
$C^\ast$-categories.  Then there is a map $\Delta \colon \KK
({\mathcal A},{\mathcal B})\rightarrow \KK ({\mathcal A}\otimes
{\mathcal C},{\mathcal B}\otimes {\mathcal C})$.  This map is
compatible with the product in the sense that we have a commutative
diagram
\[
\begin{array}{ccc}
\KK ({\mathcal A},{\mathcal B})\wedge \KK ({\mathcal B},{\mathcal C}) & \rightarrow & \KK ({\mathcal A},{\mathcal C}) \\
\downarrow & & \downarrow \\
\KK ({\mathcal A}\otimes {\mathcal D},{\mathcal B}\otimes {\mathcal D})\wedge \KK ({\mathcal B}\otimes {\mathcal D},{\mathcal C}\otimes {\mathcal D}) & \rightarrow & \KK ({\mathcal A}\otimes {\mathcal D},{\mathcal C}\otimes {\mathcal D}) \\
\end{array}
\]
where the horizontal maps are defined by the product, and the vertical maps are copies of the map $\Delta$.
\end{proposition}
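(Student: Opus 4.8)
The plan is to build $\Delta$ one level at a time, exhibit it as a morphism of symmetric spectra, and then verify the product square by a naturality argument.

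First I would isolate the natural $\ast$-functor that commutes $J$ past the tensor factor. Applying Proposition \ref{classmap} to the identity $\ast$-functor ${\mathcal A}\otimes {\mathcal C}\rightarrow {\mathcal A}\otimes {\mathcal C}$ and the semi-split sequence of Corollary \ref{jtens} yields a natural classifying map $\delta \colon J({\mathcal A}\otimes {\mathcal C})\rightarrow (J{\mathcal A})\otimes {\mathcal C}$. Applying $J$ and $\delta$ alternately gives natural $\ast$-functors $\delta \colon J^k({\mathcal A}\otimes {\mathcal C})\rightarrow (J^k{\mathcal A})\otimes {\mathcal C}$ for every $k$. Separately I would record a natural (in the first variable) $\ast$-functor $\mu \colon {\mathcal B}^H\otimes {\mathcal C}\rightarrow ({\mathcal B}\otimes {\mathcal C})^H$: the evident $\ast$-functor ${\mathcal B}_\oplus \otimes {\mathcal C}\rightarrow ({\mathcal B}\otimes {\mathcal C})_\oplus$ sending a matrix-times-morphism $(x_{ij})\otimes y$ to $(x_{ij}\otimes y)$, followed by the faithful functor $H$, is compatible with the inclusions defining ${\mathcal O}_{\mathcal B}$ and is norm-decreasing on each finite stage by Proposition \ref{AFprop}, so it passes to the colimit and extends over the spatial completion.

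With these in hand I would define $\Delta$ on the $n$-th space by sending $\alpha \colon J^{2n}{\mathcal A}\rightarrow (\Sigma^n {\mathcal B})^H$ to the composite
\[
J^{2n}({\mathcal A}\otimes {\mathcal C})\stackrel{\delta}{\rightarrow} (J^{2n}{\mathcal A})\otimes {\mathcal C} \stackrel{\alpha \otimes 1}{\rightarrow} (\Sigma^n {\mathcal B})^H \otimes {\mathcal C} \cong \Sigma^n({\mathcal B}^H\otimes {\mathcal C})\stackrel{\Sigma^n \mu}{\rightarrow} \Sigma^n(({\mathcal B}\otimes {\mathcal C})^H)
\]
where $\alpha\otimes 1$ is the spatial tensor of $\ast$-functors, the isomorphism uses Propositions \ref{Hcom} and \ref{cstp}, and the target is identified with $(\Sigma^n({\mathcal B}\otimes {\mathcal C}))^H$ by Proposition \ref{Hcom} again. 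This lands in $F(J^{2n}({\mathcal A}\otimes {\mathcal C}),\Sigma^n({\mathcal B}\otimes {\mathcal C}))$, is manifestly continuous in $\alpha$, and is $S_n$-equivariant since $\mu$ and the identifications are applied symmetrically across the $n$ suspension coordinates. That $\Delta$ commutes with the structure map $\epsilon=\eta \circ \eta$ reduces to the statement that the classifying map $\eta$ is compatible with tensoring by ${\mathcal C}$ through $\delta$ and $\mu$; this is the naturality of Proposition \ref{classmap} for the cone sequence after tensoring (Theorem \ref{sstens}), and the relevant square is the analogue of the one appearing in the proof of Theorem \ref{KKprod}.

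Finally, in the notation of the square the tensor variable is ${\mathcal D}$ and the three categories are ${\mathcal A},{\mathcal B},{\mathcal C}$; the same constructions give $\Delta$ and the product square amounts to the identity $\Delta(\alpha \sharp \beta)=\Delta(\alpha)\sharp \Delta(\beta)$ for $\alpha \in F(J^{2m}{\mathcal A},\Sigma^m{\mathcal B})$ and $\beta \in F(J^{2n}{\mathcal B},\Sigma^n{\mathcal C})$. I would prove this by expanding both sides through Definition \ref{Shprod} and assembling one large diagram, each cell of which is a naturality square for one of $\delta$, the swap $s$ of Proposition \ref{sdef}, $\mu$, or the identifications of Propositions \ref{cstp} and \ref{Hcom}. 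The main obstacle is exactly this diagram: the genuinely non-formal point is that $\delta$ (which commutes $J$ past $\otimes {\mathcal D}$) and $s$ (which commutes $J$ past $\Sigma$) commute with one another, and that $\mu$ intertwines $s$ with $\beta^H\otimes 1$ correctly, so that forming $\otimes {\mathcal D}$ before or after the Kasparov composite yields the same $\ast$-functor. Once these compatibilities are checked on the dense tensor-algebroid level and extended by continuity, bookkeeping of the same flavour as the associativity argument in Theorem \ref{KKprod} completes the proof.
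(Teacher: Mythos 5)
Your proposal is correct and follows essentially the same route as the paper: the same three ingredients (the classifying map $J({\mathcal A}\otimes {\mathcal C})\rightarrow (J{\mathcal A})\otimes {\mathcal C}$ from Corollary \ref{jtens}, the tensored $\ast$-functor $\alpha \otimes 1$, and the natural comparison ${\mathcal B}^H\otimes {\mathcal C}\rightarrow ({\mathcal B}\otimes {\mathcal C})^H$) composed in the same order. You simply supply more detail than the paper, which dismisses the compatibility checks as easy.
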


\begin{proof}
Let $\alpha \colon J^{2n}{\mathcal A} \rightarrow \Sigma^n{\mathcal B}^H$ be a $\ast$-functor.  Then, since $\Sigma {\mathcal B}^H = C_0 (0,1)\otimes {\mathcal B}^H$, we have a naturally induced $\ast$-functor $\alpha \otimes 1 \colon (J^{2n}{\mathcal A})\otimes {\mathcal C}\rightarrow \Sigma^n({\mathcal B}^H \otimes {\mathcal C})$.

There is an obvious natural $\ast$-functor $\beta \colon {\mathcal
B}^H \otimes {\mathcal C} \rightarrow ({\mathcal B}\otimes {\mathcal
C})^H$.  By corollary \ref{jtens}, there is a short exact sequence
$$0 \rightarrow (J{\mathcal A})\otimes {\mathcal C} \rightarrow
(T{\mathcal A})\otimes {\mathcal C} \rightarrow {\mathcal A}\otimes
{\mathcal C} \rightarrow 0$$ with a completely positive splitting.
We thus obtain a natural $\ast$-functor $\gamma \colon J({\mathcal
A}\otimes {\mathcal C})\rightarrow (J{\mathcal A})\otimes {\mathcal
C}$ as the classifying map of the diagram
\[
\begin{array}{ccccccccc}
& & & & & & {\mathcal A}\otimes {\mathcal C} \\
& & & & & & \| \\
0 & \rightarrow & (J{\mathcal A})\otimes {\mathcal C} & \rightarrow & (T{\mathcal A})\otimes {\mathcal C} & \rightarrow & {\mathcal A}\otimes {\mathcal C} & \rightarrow & 0 \\
\end{array}
\]

We now define the map $\Delta$ by writing $\Delta (\alpha ) = \beta
\circ (\alpha \otimes 1)\circ \gamma^n$.  Compatibility with the
product is easy to check.
\end{proof}

\section{Ring and Module Structure}

A {\em symmetric monoidal category} is a category with a sensible
idea of a product of objects, $\wedge$, along with a unit object $e$
equipped with isomorphisms $e\wedge X\rightarrow X$ and $X\wedge e
\rightarrow X$ for any object $X$ in the category.  Any standard
book on category theory, for example \cite{Mac}, can be consulted
for further details.  It is proven in \cite{HSS} that the category
of symmetric spectra is a symmetric monoidal category.  The product
is the smash product of spectra.  The unit is the sequence of spaces
\[
e= (S^0 , \star ,\star ,\ldots )
\]
where $\star$ is the one point topological space, and $S^0$ is the disjoint union of two points.  By definition of the smash product in the category of symmetric spectra, there is a unique natural isomorphism between the objects $e\wedge {\mathbb E}$, ${\mathbb E}\wedge e$, and $\mathbb E$ for any spectrum $\mathbb E$.

\begin{definition}
A {\em symmetric ring spectrum} is a monoid in the category of symmetric spectra.
\end{definition}

To be more precise, a symmetric spectrum $R$ is called a {\em
symmetric ring spectrum} if it is equipped with an associative
product $\mu \colon R\wedge R\rightarrow R$ and a unit map $\eta
\colon e\rightarrow R$ such that we have a commutative diagram
\[
\begin{array}{ccccc}
e\wedge R & \stackrel{\eta \wedge 1}{\rightarrow} & R\wedge R & \stackrel{1\wedge \eta}{\leftarrow} & R\wedge e \\
\downarrow & & \downarrow & & \downarrow \\
R & = & R & = & R \\
\end{array}
\]

Here the central vertical map is the product $\mu$.  The vertical
maps on the left and right are the isomorphisms associated with the
unit $e$.

\begin{theorem} \label{RS}
Let $\mathcal A$ be a $C^\ast$-category.  Then the spectrum $\KK ({\mathcal A},{\mathcal A})$ is a symmetric ring spectrum.
\end{theorem}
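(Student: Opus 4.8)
The plan is to exhibit $\KK({\mathcal A},{\mathcal A})$ as a monoid in the symmetric monoidal category of symmetric spectra. For the multiplication I take the product map
\[
\mu \colon \KK({\mathcal A},{\mathcal A}) \wedge \KK({\mathcal A},{\mathcal A}) \rightarrow \KK({\mathcal A},{\mathcal A})
\]
furnished by Theorem \ref{KKprod} in the special case ${\mathcal B}={\mathcal C}={\mathcal A}$. The associativity condition for a monoid, namely $\mu \circ (\mu \wedge 1) = \mu \circ (1\wedge \mu)$, is exactly the associativity statement of Theorem \ref{KKprod} with ${\mathcal A}={\mathcal B}={\mathcal C}={\mathcal D}$, so no further work is required there.

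For the unit I use that the zeroth space of $\KK({\mathcal A},{\mathcal A})$ is $F({\mathcal A},{\mathcal A})$, the space of $\ast$-functors ${\mathcal A}\rightarrow {\mathcal A}^H$. The canonical faithful $\ast$-functor $\iota \colon {\mathcal A}\rightarrow {\mathcal A}^H$ (the composite of the inclusion ${\mathcal A}\hookrightarrow {\mathcal A}_\oplus$ with $H$) is a distinguished point of this space; it is the element corresponding to the identity functor $1_{\mathcal A}$ under the assignment of a $KK$-element to a $\ast$-functor. A map of symmetric spectra out of the unit $e$ is determined by a basepoint-preserving map $S^0 \rightarrow F({\mathcal A},{\mathcal A})$, that is, by a choice of point; I take this point to be $\iota$, obtaining the unit map $\eta \colon e \rightarrow \KK({\mathcal A},{\mathcal A})$.

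It then remains to check the unit axiom, i.e. that $\mu \circ (\eta \wedge 1)$ and $\mu\circ(1\wedge \eta)$ agree with the canonical isomorphisms $e\wedge \KK({\mathcal A},{\mathcal A}) \cong \KK({\mathcal A},{\mathcal A}) \cong \KK({\mathcal A},{\mathcal A})\wedge e$. Concretely this says $\iota \sharp \beta = \beta$ and $\beta \sharp \iota = \beta$ for every $\beta \in F(J^{2n}{\mathcal A},\Sigma^n{\mathcal A})$ and every $n$. In degree zero this is precisely Proposition \ref{pco}, which gives $1_{\mathcal A}\sharp \beta = \beta \circ 1_{\mathcal A}=\beta$ and $\beta \sharp 1_{\mathcal A}=\beta$. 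For general $n$ I would unwind Definition \ref{Shprod} with the factor $\iota$ of index $m=0$: here the map $s$ of Proposition \ref{sdef} is the identity (as $l=0$), so $\iota \sharp \beta$ reduces to the composite of $J^{2n}\iota$, the functor $\beta^H$, and the isomorphism $\Sigma^n(({\mathcal A}^H)^H)\cong \Sigma^n({\mathcal A}^H)$ of Corollary \ref{AHCAC}, with the symmetric situation for $\beta \sharp \iota$.

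The step I expect to be the real obstacle is this last verification. By naturality of the $H$-construction (Proposition \ref{Hcom}), $\beta^H \circ J^{2n}\iota$ equals the composite of $\beta$ with the canonical stabilisation map $\Sigma^n{\mathcal A}^H \rightarrow \Sigma^n(({\mathcal A}^H)^H)$, and one must then confirm that the isomorphism of Corollary \ref{AHCAC} is inverse to this canonical map, so that the two cancel and leave $\beta$; the argument for $\beta \sharp \iota$ is parallel. These compatibilities are bookkeeping about the direct-limit isomorphism defining $({\mathcal A}^H)^H\cong {\mathcal A}^H$ together with the naturality of $\iota$, but they are exactly the point at which one must be careful. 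Granting them, the unit diagram commutes and $\KK({\mathcal A},{\mathcal A})$ is a symmetric ring spectrum.
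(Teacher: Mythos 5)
Your proposal follows essentially the same route as the paper: the multiplication and its associativity are taken directly from Theorem \ref{KKprod}, the unit map $e\rightarrow \KK({\mathcal A},{\mathcal A})$ is given by the point of $F({\mathcal A},{\mathcal A})$ corresponding to the identity $\ast$-functor, and the unit axiom is reduced to Proposition \ref{pco}. Your extra care about the unit identities in positive degrees (the compatibility of the stabilisation map with the isomorphism $({\mathcal A}^H)^H\cong {\mathcal A}^H$) addresses a point the paper silently subsumes under its one-line citation of Proposition \ref{pco}, so you are if anything more thorough, not different.
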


\begin{proof}
By theorem \ref{KKprod} we have an associative product
\[
\mu \colon \KK ({\mathcal A},{\mathcal A})\wedge \KK ({\mathcal A},{\mathcal A}) \rightarrow \KK ({\mathcal A},{\mathcal A})
\]

Recall that the unit, $e$, is the sequence of spaces $(S^0 , \star ,
\star ,\ldots )$.  Thus there is only one point in the $0$-th space
that is not a basepoint.  We can define a unit map $\eta \colon
e\rightarrow \KK ({\mathcal A},{\mathcal A})$ by mapping the base
point of the $n$-th space of the spectrum $e$ to the base point of
the $n$-th space of the spectrum $\KK ({\mathcal A},{\mathcal A})$,
and mapping the point in $S^0$ which is not a basepoint to the point
in the space $F({\mathcal A},{\mathcal A})$ arising from the
identity $\ast$-functor $1\colon {\mathcal A}\rightarrow {\mathcal
A}$.

Commutativity of the diagram involving the unit follows from proposition \ref{pco}.
\end{proof}

The following definition comes from applying a definition for
symmetric monoidal categories to the category of symmetric spectra.

\begin{definition}
Let $R$ be a symmetric ring spectrum equipped with multiplication
$\mu$.  Then a symmetric spectrum $M$ is called a {\em symmetric
$($left$)$ $R$-module spectrum} if it comes equipped with a
multiplication $\mu'\colon R\wedge M \rightarrow M$ such that we
have a commutative diagram
\[
\begin{array}{ccc}
R\wedge R\wedge M & \stackrel{\mu \wedge 1}{\rightarrow} & R\wedge M \\
\downarrow & & \downarrow \\
R\wedge M & \stackrel{\mu'}{\rightarrow} & M \\
\end{array}
\]

Here the vertical map on the left is the product $1\wedge \mu'$ and the vertical map on the right is the product $\mu'$.
\end{definition}

\begin{theorem} \label{RS2}
Let $\mathcal A$ and $\mathcal B$ be $C^\ast$-categories.  Then the
spectrum $\KK ({\mathcal A},{\mathcal B})$ is a symmetric $\KK (\F
,\F )$-module spectrum.
\end{theorem}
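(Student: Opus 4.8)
The plan is to exhibit $\KK(\mathcal{A},\mathcal{B})$ as a module over the symmetric ring spectrum $\KK(\mathcal{A},\mathcal{A})$ of Theorem \ref{RS}, and then to restrict scalars along a ring homomorphism $\KK(\F,\F)\rightarrow \KK(\mathcal{A},\mathcal{A})$. The first observation is that $\KK(\mathcal{A},\mathcal{B})$ is already a left $\KK(\mathcal{A},\mathcal{A})$-module, with essentially no new work required. Indeed, Theorem \ref{KKprod} provides a product $\KK(\mathcal{A},\mathcal{A})\wedge \KK(\mathcal{A},\mathcal{B})\rightarrow \KK(\mathcal{A},\mathcal{B})$ sending $\alpha\wedge\beta\mapsto\alpha\sharp\beta$, and the module associativity diagram is exactly the special case of the associativity statement in Theorem \ref{KKprod} obtained by taking the four categories to be $\mathcal{A},\mathcal{A},\mathcal{A},\mathcal{B}$.

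Next I would produce the comparison map. Since $\F$, regarded as a one-object $C^\ast$-category, is a unit for the spatial tensor product, there are natural isomorphisms $\F\otimes\mathcal{A}\cong\mathcal{A}$. Applying the map $\Delta$ of Proposition \ref{dmap} with $\mathcal{C}=\mathcal{A}$ to the arguments $\F,\F$ and then using these isomorphisms on source and target yields a map of symmetric spectra
$$
\rho \colon \KK(\F,\F) \rightarrow \KK(\F\otimes\mathcal{A},\F\otimes\mathcal{A}) \cong \KK(\mathcal{A},\mathcal{A}).
$$
The compatibility square in Proposition \ref{dmap}, taken with $\mathcal{D}=\mathcal{A}$ and all three categories equal to $\F$, says precisely that $\Delta$ carries the Kasparov product to the Kasparov product, so $\rho$ is multiplicative. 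That $\rho$ sends the unit class of $\KK(\F,\F)$ described in Theorem \ref{RS} to the unit class of $\KK(\mathcal{A},\mathcal{A})$ follows from the description of $\Delta$ on classes of $\ast$-functors (where it is just tensoring with the identity on $\mathcal{A}$), together with Proposition \ref{pco}. Hence $\rho$ is a homomorphism of symmetric ring spectra.

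Restricting scalars along $\rho$ then equips $\KK(\mathcal{A},\mathcal{B})$ with the desired $\KK(\F,\F)$-module structure: the action is the composite
$$
\KK(\F,\F)\wedge \KK(\mathcal{A},\mathcal{B}) \stackrel{\rho\wedge 1}{\rightarrow} \KK(\mathcal{A},\mathcal{A})\wedge \KK(\mathcal{A},\mathcal{B}) \rightarrow \KK(\mathcal{A},\mathcal{B}),
$$
and the module compatibility diagram commutes because $\rho$ respects products while $\KK(\mathcal{A},\mathcal{B})$ is already a $\KK(\mathcal{A},\mathcal{A})$-module. I expect the only genuinely delicate point to be the bookkeeping around the unit isomorphism $\F\otimes\mathcal{A}\cong\mathcal{A}$: one must check that it is natural and compatible with the suspension coordinates and the symmetric group actions used in the definition of the spectra, so that $\rho$ really is a map of symmetric spectra and these identifications do not disturb the equivariance of the structure maps. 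Everything else reduces to the associativity established in Theorem \ref{KKprod} and the product-compatibility of $\Delta$ recorded in Proposition \ref{dmap}.
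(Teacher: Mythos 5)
Your proposal is correct and takes essentially the same route as the paper: the action is defined as the composite $\mu \circ (\Delta \wedge 1)$ of the map $\Delta$ from Proposition \ref{dmap} (with ${\mathcal C}={\mathcal A}$, using $\F \otimes {\mathcal A}\cong {\mathcal A}$) followed by the product of Theorem \ref{KKprod}. The paper's proof consists of exactly this one composite; you have simply spelled out the verification of the module axioms in more detail.
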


\begin{proof}
By theorem \ref{KKprod} and proposition \ref{dmap} we can define a suitable product
\[
\KK (\F , \F) \wedge \KK ({\mathcal A},{\mathcal B})
\stackrel{\Delta \wedge 1}{\rightarrow} \KK ({\mathcal A},{\mathcal
A}) \wedge \KK ({\mathcal A},{\mathcal B})
\stackrel{\mu}{\rightarrow} \KK ({\mathcal A},{\mathcal B})
\]
\end{proof}

\section{The Equivariant Case}

Let $\mathcal G$ be a discrete groupoid.  We will regard $\mathcal
G$ as a small category in which every morphism is invertible. Taking
this point of view, we define a {\em $\mathcal G$-algebra} to be a
functor from the category $\mathcal G$ to the category of algebras
and homomorphisms.  Similarly (see \cite{Mitch6}) a {\em $\mathcal
G$}-$C^\ast$-algebra is a functor from the category $\mathcal G$ to
the category of $C^\ast$-algebras and $\ast$-homomorphisms.

Thus, if $A$ is a $\mathcal G$-$C^\ast$-algebra, then for each
object $a\in \Ob ({\mathcal G})$ we have a $C^\ast$-algebra $A(a)$.
A morphism $g\in \Hom (a,b)_{\mathcal G}$ induces a homomorphism
$g\colon A(a)\rightarrow A(b)$.

We can regard an ordinary $C^\ast$-algebra $C$ as a $\mathcal
G$-$C^\ast$-algebra by writing $C(a)=C$ for each object $a\in \Ob
({\mathcal G})$ and saying that each morphism in the groupoid
$\mathcal G$ acts as the identity map.

A {\em $\mathcal G$-equivariant map} (or more simply {\em
equivariant map}, when we do not need to emphasize the groupoid
$\mathcal G$) between $\mathcal G$-$C^\ast$-algebras $A$ and $B$ is
a natural transformation from the functor $A$ to the functor $B$. We
can similarly talk about equivariant completely positive maps.

A {\em short exact sequence} of $\mathcal G$-$C^\ast$-algebras is a
sequence of $\mathcal G$-$C^\ast$-algebras and equivariant maps
\[
A\stackrel{f}{\rightarrow} B \stackrel{g}{\rightarrow} C
\]
such that the sequence
\[
0\rightarrow A(a)\stackrel{f}{\rightarrow} B(a)\stackrel{g}{\rightarrow} C(a)\rightarrow 0
\]
is exact for each object $a\in \Ob ({\mathcal G})$.  A splitting of
a short exact sequence is defined in the obvious way, as is a
completely positive splitting.  As before, we refer to split exact
sequences and semi-split exact sequences respectively.

To define such a $KK$-theory spectrum for $\mathcal
G$-$C^\ast$-algebras, we need variations of the various
constructions defined earlier for $C^\ast$-categories.

\begin{definition}
Let $\mathcal G$ be a discrete groupoid, and let $A$ be a $\mathcal
G$-$C^\ast$-algebra.  Then we define $A[0,1]$ to be the $\mathcal
G$-$C^\ast$-algebra where the algebra $A[0,1](a)$ consists of all
continuous functions $f\colon [0,1] \rightarrow \Hom A(a)$.  The
$\mathcal G$-action is defined by the formula
\[
g(f)(t) = g(f(t)) \qquad g\in \Hom (a,b)_{\mathcal G},\ f\colon [0,1]\rightarrow A(a),\ t\in [0,1]
\]

We define the {\em cone}, $CA$ to be the $\mathcal G$-$C^\ast$-algebra where
\[
CA(a) = \{ f\in A[0,1](a)\ |\ f(0)=0 \} \qquad a\in \Ob ({\mathcal G})
\]
and the $\mathcal G$-action is as defined above.  The {\em suspension}, $\Sigma {A}$ is defined similarly by writing
\[
\Sigma A(a) = \{ f\in CA(a)\ |\ f(1)=0 \} \qquad a\in \Ob ({\mathcal G})
\]
\end{definition}

There is an obvious natural equivariant map $i\colon \Sigma
A\rightarrow CA$ defined by inclusion.  There is also an equivariant
map $j\colon CA\rightarrow A$, defined by the formula $j(f)=f(1)$,
where $f\in CA(a)$.  It is easy to check that we have a short exact
sequence
\[
0 \rightarrow \Sigma {A} \rightarrow C{A}\rightarrow {A}\rightarrow 0
\]

The above exact sequence has a natural completely positive splitting $s\colon {A}\rightarrow C{A}$ defined by the formula
\[
s (x)(t)= tx \qquad t\in [0,1],\ x\in A(a)
\]

\begin{definition}
Let $A$ and $B$ be $\mathcal G$-$C^\ast$-algebras.  Then we define
the {\em tensor product} $A\otimes B$ to be the $\mathcal
G$-$C^\ast$-algebra where $(A\otimes B)(a) = A(a)\otimes B(a)$, and
the $\mathcal G$-action is defined by writing $g(x\otimes y) =
g(x)\otimes g(y)$ whenever $g\in \Hom (a,b)_{\mathcal G}$, $x\in
A(a)$, and $y\in B(a)$.

We define the {\em algebraic tensor product} $A\odot B$ to be the $\mathcal G$-algebra where $(A\odot B)(a) = A(a)\odot B(a)$,
and the $\mathcal G$-action is defined as above.

We define the {\em direct sum} $A\oplus B$ to be the $\mathcal
G$-$C^\ast$-algebra where $(A\oplus B)(a)$ is the direct sum
$A(a)\oplus B(a)$ for each object $a\in \Ob ({\mathcal G})$ and the
$\mathcal G$-action is defined by writing $g(x\oplus y)= g(x)\oplus
g(y)$ whenever $g\in \Hom (a,b)_{\mathcal G}$, $x\in A(a)$, and
$y\in B(a)$.
\end{definition}

The above tensor product of $C^\ast$-algebras can be assumed to be the spatial tensor product.

\begin{definition}
Let $A$ be a $\mathcal G$-$C^\ast$-algebra.  Then we define
$A^{\odot k}$ to be the algebraic tensor product of $A$ with itself
$k$ times.  We define the {\em equivariant algebraic tensor
algebra}, $T_\mathrm{alg}A$, to be the completion of the iterated
direct sum
\[
T_\mathrm{alg}A = \oplus_{k=1}^\infty A^{\odot k}
\]

Multiplication in the algebra $T_\mathrm{alg}A$ is given by concatenation of tensors.
\end{definition}

We have a canonical set of linear maps $\sigma \colon {A}\rightarrow
T_\mathrm{alg}{A}$ defined by mapping each element of the algebra
$A$ onto the first summand.  The following result is easy to check.

\begin{proposition}
The equivariant tensor algebra, $T_\mathrm{alg}{A}$, can be equipped
with a $C^\ast$-norm given by defining the norm, $\| u \|$, of an
element $u$ in the equivariant tensor algebra to be the supremum of
all values $p( \varphi (u))$ where $\varphi \colon T_\mathrm{alg}
{A}\rightarrow {B}$ is an equivariant completely positive map into a
$\mathcal G$-$C^\ast$-algebra $B$ such that the composition $\varphi
\circ \sigma \colon {A}\rightarrow {B}$ is completely positive and
norm-decreasing, and $p$ is a $C^\ast$-seminorm on the
$C^\ast$-algebra $B$. \noproof
\end{proposition}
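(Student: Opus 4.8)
The plan is to read the displayed formula as a supremum over a family of $C^\ast$-seminorms and to verify, in turn, that each member of the family really is a $C^\ast$-seminorm, that the supremum is finite, that the result satisfies the $C^\ast$-axioms and is $\mathcal G$-invariant, and finally that it is non-degenerate. Throughout I read the admissible $\varphi \colon T_\mathrm{alg}A \rightarrow B$ as an equivariant $\ast$-homomorphism (in parallel with the $\ast$-functor appearing in the non-equivariant version of this proposition), with $\varphi\circ\sigma$ required to be completely positive and norm-decreasing; multiplicativity of $\varphi$ is what makes the construction work. For a fixed admissible pair $(\varphi,p)$ I would set $q_{(\varphi,p)}(u)=p(\varphi(u))$ on each fibre $(T_\mathrm{alg}A)(a)=\bigoplus_{k\geq 1}A(a)^{\odot k}$. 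Since $\varphi$ is linear and $\ast$-preserving, $q_{(\varphi,p)}$ is a $\ast$-invariant seminorm; since $\varphi$ is multiplicative and $p$ is sub-multiplicative, $q_{(\varphi,p)}(uv)\leq q_{(\varphi,p)}(u)\,q_{(\varphi,p)}(v)$; and the $C^\ast$-identity for $p$ gives $q_{(\varphi,p)}(u^\ast u)=p(\varphi(u)^\ast\varphi(u))=p(\varphi(u))^2=q_{(\varphi,p)}(u)^2$. Hence each $q_{(\varphi,p)}$ is a $C^\ast$-seminorm.

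Next I would bound the supremum. Any $u$ is a finite sum of elementary tensors $x_1\otimes\cdots\otimes x_k$, and, because concatenation is multiplication and $\sigma$ is the inclusion as the first summand, $\varphi(x_1\otimes\cdots\otimes x_k)=(\varphi\circ\sigma)(x_1)\cdots(\varphi\circ\sigma)(x_k)$. Using that any $C^\ast$-seminorm on a $C^\ast$-algebra is dominated by the norm (equivalently, $\ast$-homomorphisms are norm-decreasing, cf.\ proposition \ref{AFprop}) together with the hypothesis that $\varphi\circ\sigma$ is norm-decreasing, one gets $p(\varphi(x_1\otimes\cdots\otimes x_k))\leq\prod_{i=1}^k\|x_i\|$, and summing over the finitely many elementary tensors in $u$ bounds $p(\varphi(u))$ by a quantity depending only on $u$; so the supremum is finite. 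A finite supremum of $C^\ast$-seminorms is again a $C^\ast$-seminorm: sub-additivity, $\ast$-invariance and sub-multiplicativity pass to a supremum, and the $C^\ast$-identity survives because $t\mapsto t^2$ commutes with suprema on $[0,\infty)$, giving $\sup_{(\varphi,p)}q_{(\varphi,p)}(u^\ast u)=\sup_{(\varphi,p)}q_{(\varphi,p)}(u)^2=(\sup_{(\varphi,p)}q_{(\varphi,p)}(u))^2$. The $\mathcal G$-invariance of $\|\cdot\|$ is then immediate, since every morphism of the groupoid $\mathcal G$ is invertible (so the structure maps of $A$, $T_\mathrm{alg}A$ and $B$ are $\ast$-isomorphisms) and the admissible data $\varphi,p$ are equivariant, leaving the defining set of values unchanged under the action.

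The only step that is not purely formal — and the one I expect to be the main obstacle — is upgrading $\|\cdot\|$ from a seminorm to a genuine norm, i.e.\ establishing non-degeneracy. For this it suffices to exhibit a single \emph{faithful} admissible pair. Fibrewise, $(T_\mathrm{alg}A)(a)$ is just the ordinary Cuntz tensor algebra of the $C^\ast$-algebra $A(a)$, for which the analogous seminorm is known to be a norm; the genuine difficulty is that the supremum here ranges only over \emph{equivariant} pairs, so a faithful non-equivariant pair on one fibre does not by itself separate points. The plan is to use the groupoid structure to induce one: choosing a representative object in each connected component reduces the data to an action of the corresponding isotropy group on a single fibre, and tensoring a faithful representation with the regular representation of that group spreads it to a faithful equivariant representation, from which a faithful equivariant admissible pair is constructed exactly as in the non-equivariant Cuntz construction. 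This induction is precisely where the invertibility of the morphisms of $\mathcal G$ is needed.
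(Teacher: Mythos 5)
The paper gives no argument for this proposition at all (it is flagged ``easy to check''), so there is no proof of record to compare yours against; judged on its own terms, your write-up is the standard Cuntz-style argument and the first two paragraphs are complete and correct. In particular you are right to read the admissible $\varphi$ as an equivariant $\ast$-homomorphism rather than the ``equivariant completely positive map'' of the statement: with a merely completely positive $\varphi$ neither submultiplicativity nor the $C^\ast$-identity of $p\circ\varphi$ would hold, the side condition on $\varphi\circ\sigma$ would be redundant, and the supremum would not even be finite, so the wording must be a slip (the non-equivariant version of the proposition confirms this). The finiteness bound $p(\varphi(x_1\otimes\cdots\otimes x_k))\leq\prod_i\|x_i\|$, the stability of the $C^\ast$-seminorm axioms under pointwise-finite suprema, and the $\mathcal G$-invariance are all fine.

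The one place where you have a plan rather than a proof is non-degeneracy, and you say so yourself. Two comments. First, the equivariant upgrade is easier than your route through connected components and isotropy groups suggests: given $0\neq u\in (T_\mathrm{alg}A)(a)$ and a non-equivariant admissible pair $\psi\colon (T_\mathrm{alg}A)(a)\rightarrow D$, $p$ with $p(\psi(u))>0$, co-induce by setting $B(b)=\ell^\infty(\Hom (a,b)_{\mathcal G},D)$ with $\mathcal G$ acting by translation and $\varphi_b(v)=(\psi(g^{-1}v))_{g\in \Hom (a,b)_{\mathcal G}}$; this is equivariant and admissible, and its component at $1_a$ recovers $\psi(u)$. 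Second, that reduction shows all the content sits in the non-equivariant assertion that the universal seminorm on the algebraic Cuntz tensor algebra of a $C^\ast$-algebra is a genuine norm, which you cite as ``known'' rather than prove. That is the only non-formal ingredient: the canonical $\ast$-homomorphism $\pi$ is not faithful on $T_\mathrm{alg}A$ (it kills, in each degree, the kernel of the multiplication map), so one really must exhibit other admissible completely positive contractions separating points. A careful treatment should either supply this from \cite{Cu4,Cu5} or note that it is dispensable for the rest of the paper: everything downstream uses only the completion $TA$, the maps $\sigma$ and $\pi$, and the universal property, so one could pass to the quotient by the null ideal of the seminorm, and injectivity of $\sigma$ itself is free from $\pi\circ\sigma=1$.
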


We define the {\em equivariant tensor $C^\ast$-algeba}, $T{\mathcal
A}$, to be the completion of the equivariant tensor algebra
$T_\mathrm{alg}{\mathcal A}$ with respect to the above norm.
Formation of the equivariant tensor algebra defines a functor from
the category of $\mathcal G$-$C^\ast$-algebras and equivariant maps
to itself.  Further, just as we showed in the non-equivariant case,
we have a universal property.

\begin{proposition}
Let $A$ and $B$ be $\mathcal G$-$C^\ast$-algebras.  Let $\alpha
\colon A\rightarrow B$ be a completely positive equivariant map.
Then there is a unique equivariant map $\varphi \colon TA\rightarrow
B$ such that $\alpha = \varphi \circ \sigma$. \noproof
\end{proposition}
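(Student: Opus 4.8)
The plan is to mirror the proof of the non-equivariant universal property (Proposition \ref{Tuniversal}) fibrewise over the objects of $\mathcal G$, adding a single new verification that the resulting maps assemble into a natural transformation. Recall that $(T_\mathrm{alg}A)(a) = \bigoplus_{k=1}^\infty A(a)^{\odot k}$ is simply the ordinary algebraic tensor algebra of the $C^\ast$-algebra $A(a)$, that the $\mathcal G$-action acts diagonally on tensors, and that $\sigma$ embeds $A(a)$ as the first summand.

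First I would define, for each object $a\in \Ob ({\mathcal G})$, a linear map $\varphi_a \colon (T_\mathrm{alg}A)(a)\rightarrow B(a)$ by the formula
$$\varphi_a (x_1 \otimes \cdots \otimes x_n) = \alpha (x_1)\cdots \alpha (x_n),$$
extended linearly. Since the product in the tensor algebra is concatenation, $\varphi_a$ is multiplicative. Because a completely positive map is automatically self-adjoint, and the involution on the tensor algebra reverses and conjugates the tensor factors, $\varphi_a$ also commutes with the involution; hence each $\varphi_a$ is a $\ast$-homomorphism. It is the only algebra homomorphism satisfying $\varphi_a \circ \sigma = \alpha$, since $\sigma (A(a))$ generates the fibre.

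Next I would verify equivariance. For $g\in \Hom (a,b)_{\mathcal G}$ the action sends $x_1 \otimes \cdots \otimes x_n$ to $g(x_1)\otimes \cdots \otimes g(x_n)$, and since $\alpha$ is a natural transformation we have $\alpha (g(x_i)) = g(\alpha (x_i))$, so $\varphi_b \circ g = g\circ \varphi_a$; thus the family $\{\varphi_a\}$ is an equivariant map on the algebraic tensor algebra. Finally, $\varphi$ is itself an equivariant completely positive map whose composite with $\sigma$ is the norm-decreasing completely positive map $\alpha$, so it lies in the class of maps over which the $C^\ast$-norm on $T_\mathrm{alg}A$ is defined as a supremum; taking $p$ to be the norm of $B$ in that supremum shows $\| \varphi (u)\| \leq \| u \|$, whence $\varphi$ extends by continuity to a unique equivariant map $TA\rightarrow B$. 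Uniqueness on $TA$ then follows because any continuous equivariant extension is forced on the dense subalgebra $T_\mathrm{alg}A$.

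The only genuinely new point compared with Proposition \ref{Tuniversal} is checking that the fibrewise homomorphisms respect the $\mathcal G$-action, which is immediate from the diagonal definition of the action and the naturality of $\alpha$. I expect the step needing the most care to be the norm estimate, which presupposes (as in the non-equivariant statement) that $\alpha$ is norm-decreasing, so that $\varphi$ indeed belongs to the class over which the tensor-algebra norm is taken.
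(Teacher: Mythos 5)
Your proof is correct and follows essentially the route the paper intends: the paper gives no argument here beyond asserting that the universal property holds ``just as we showed in the non-equivariant case,'' and your fibrewise construction of the $\ast$-homomorphisms $\varphi_a$ on each $(T_\mathrm{alg}A)(a)$, together with the equivariance check via naturality of $\alpha$ and the norm estimate from the definition of the tensor-algebra norm, is exactly the adaptation of Proposition \ref{Tuniversal} that is meant. You are also right to flag that the continuity step needs $\alpha$ to be norm-decreasing, a hypothesis stated explicitly in the non-equivariant version but omitted from this one.
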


There is a natural equivariant map $\pi \colon TA\rightarrow A$ defined by the formula
\[
\varphi (x_1 \otimes \cdots \otimes x_n) = x_n\ldots x_1 \qquad x_i \in A(a),\ a\in \Ob ({\mathcal G})
\]

We can thus define a $\mathcal G$-$C^\ast$-algebra $JA$ by writing
\[
JA(a) = \ker \pi \colon TA(a)\rightarrow A(a)
\]
for each object $a\in \Ob ({\mathcal G})$.  The $\mathcal G$-action
is inherited from the equivariant tensor algebra.  There is a
natural semi-split short exact sequence
\[
0\rightarrow J{A} \hookrightarrow T{A} \stackrel{\pi}{\rightarrow} {A}\rightarrow 0
\]
with completely positive splitting $\sigma \colon {A}\rightarrow
T{A}$.  The following result is proved in the same way as
proposition \ref{classmap}.

\begin{proposition} \label{classmape}
Let
\[
0\rightarrow {I}\rightarrow {E} \rightarrow {B} \rightarrow 0
\]
be a semi-split exact sequence of $\mathcal G$-$C^\ast$-algebras.
Let $\alpha \colon {A}\rightarrow {B}$ be an equivariant map.  Then
there are equivariant maps $\tau \colon T{A}\rightarrow {E}$ and
$\gamma \colon J{A}\rightarrow {I}$ such that we have a commutative
diagram
\[
\begin{array}{ccccccccc}
0 & \rightarrow & J{A} & \rightarrow & T{A} & \rightarrow & {A} & \rightarrow & 0 \\
& & \downarrow & & \downarrow & & \downarrow \\
0 & \rightarrow & {I} & \rightarrow & {E} & \rightarrow & {B} & \rightarrow & 0 \\
\end{array}
\]
\noproof
\end{proposition}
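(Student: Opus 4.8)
The plan is to mirror the proof of Proposition~\ref{classmap}, replacing the universal property of the tensor $C^\ast$-category by its equivariant counterpart proved just above. Write the bottom row as $0 \rightarrow I \stackrel{i}{\rightarrow} E \stackrel{j}{\rightarrow} B \rightarrow 0$, and let $s \colon B \rightarrow E$ be a completely positive equivariant splitting; this exists because the sequence is semi-split, and it satisfies $j \circ s = 1_B$. The composite $s \circ \alpha \colon A \rightarrow E$ is then an equivariant, norm-decreasing, completely positive map, being the composition of the equivariant $\ast$-map $\alpha$ with the equivariant completely positive map $s$.

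First I would apply the equivariant universal property of $TA$ to $s \circ \alpha$, obtaining a unique equivariant map $\tau \colon TA \rightarrow E$ with $\tau \circ \sigma = s \circ \alpha$; this is the candidate middle vertical arrow. Next I would check that the right-hand square commutes, i.e. $j \circ \tau = \alpha \circ \pi$. The cleanest argument is via the uniqueness clause of the universal property: both $j \circ \tau$ and $\alpha \circ \pi$ are equivariant maps $TA \rightarrow B$, and precomposing each with $\sigma$ yields $j \circ \tau \circ \sigma = j \circ s \circ \alpha = \alpha$ and $\alpha \circ \pi \circ \sigma = \alpha \circ 1_A = \alpha$, using $j \circ s = 1_B$ and $\pi \circ \sigma = 1_A$. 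Since the completely positive map $\alpha \colon A \rightarrow B$ admits a unique equivariant extension along $\sigma$, the two maps coincide. Equivalently, one may observe that $\sigma(A)$ densely generates $TA$ as a $C^\ast$-algebra, so two equivariant $\ast$-maps that agree on $\sigma(A)$ agree everywhere.

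Finally, commutativity of the right square forces $\tau$ to carry $JA$ into $I$: objectwise, if $x \in JA(a) = \ker(\pi)(a)$ then $j(\tau(x)) = \alpha(\pi(x)) = 0$, so $\tau(x) \in \ker(j)(a) = I(a)$. Restricting $\tau$ therefore defines the equivariant map $\gamma \colon JA \rightarrow I$, and the left square commutes by construction, completing the diagram. I expect the only step needing genuine care to be the right-square commutativity, and even that reduces immediately to the uniqueness in the equivariant universal property; the remaining points --- that $s \circ \alpha$ is equivariant and completely positive, and that the objectwise restriction is well defined --- are routine and are handled exactly as in the non-equivariant Proposition~\ref{classmap}.
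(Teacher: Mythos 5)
Your proposal is correct and follows exactly the route the paper takes: the paper simply states that Proposition~\ref{classmape} is proved in the same way as Proposition~\ref{classmap}, whose proof is precisely your argument --- apply the (equivariant) universal property of $TA$ to $s\circ\alpha$ to get $\tau$, and define $\gamma$ by restriction. Your additional verification that the right-hand square commutes via the uniqueness clause is a detail the paper leaves implicit, but it is the intended justification.
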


\begin{definition}
The homomorphism $\gamma$ is called the {\em classifying map} of the diagram
\[
\begin{array}{ccccccccc}
& & & & & & {A} \\
& & & & & & \downarrow \\
0 & \rightarrow & {I} & \rightarrow & {E} & \rightarrow & {B} & \rightarrow & 0 \\
\end{array}
\]
\end{definition}

\begin{definition}
Let $A$ and $B$ be $\mathcal G$-$C^\ast$-algebras.  Then we define
$F_{\mathcal G}(A,B)$ to be the space of all equivariant maps
$A\rightarrow B\otimes {\mathcal K}$.    The topology on this
function space is the compact open topology.
\end{definition}

The above metric is well-defined, since any $\ast$-homomorphism
between $C^\ast$-algebras is norm-decreasing.

Consider an equivariant map $\alpha \in F_{\mathcal G}(J^k {A},B)$.
Then we can check (for example, by looking at matrices and then
taking direct limits) that we have a semi-split exact sequence
\[
0\rightarrow \Sigma {B}\otimes {\mathcal K} \rightarrow C{B}\otimes {\mathcal K} \rightarrow {B}\otimes {\mathcal K} \rightarrow 0
\]
and so, by proposition \ref{classmape}, a classifying map
\[
\eta (\alpha ) \colon J^{k+1} {A} \rightarrow \Sigma B\otimes {\mathcal K}
\]

\begin{definition}
We define $\KK_{\mathcal G}({A},{B})$ to be the symmetric spectrum
with sequence of spaces $(F_{\mathcal G}(J^{2n} {A},\Sigma^n{B}))$
with $S_n$-action defined by permuting the order in which the
suspensions are made.  The classifying map
\[
\epsilon \colon F_{\mathcal G}(J^{2n} {A},\Sigma^n {B}) \rightarrow
\Omega F_{\mathcal G}(J^{2n+2}{A},\Sigma^{n+1}{B}) \cong F_{\mathcal
G}(J^{2n+2}{A},\Sigma^{n+2}{B})
\]
is defined by applying the above classifying map construction twice,
that is to say writing $\epsilon (\alpha ) = \eta (\eta (\alpha ))$
whenever $\alpha \in F_{\mathcal G}(J^{2n}{A},\Sigma^n B)$.
\end{definition}

Just as in proposition \ref{sdef}, for any $\mathcal
G$-$C^\ast$-algebra $A$, there is a natural equivariant map $s\colon
J^k\Sigma^l A\rightarrow \Sigma^l J^k A$.

\begin{definition}
Let $A$, $B$, and $C$ be $\mathcal G$-$C^\ast$-algebras.  Let
$\alpha \in F_{\mathcal G}(J^{2m} {A}, \Sigma^m{B})$ and $\beta \in
F_{\mathcal G}(J^{2n} {B},\Sigma^n{C})$.  Then we define the product
$\alpha \sharp \beta$ to be the composition
\[
J^{2m+2n}{A} \stackrel{J^{2n} \alpha}{\rightarrow} J^{2n} \Sigma^m
(B\otimes {\mathcal K}) \stackrel{s}{\rightarrow} \Sigma^m J^{2n}
(B\otimes {\mathcal K})\stackrel{\Sigma^m(\beta \otimes
1)}{\rightarrow} \Sigma^{m+n}C\otimes {\mathcal K}
\]
\end{definition}

The following result is proved in exactly the same way as theorem \ref{KKprod}.

\begin{theorem} \label{KKprod2}
Let $A$, $B$, and $C$ be $\mathcal G$-$C^\ast$-algebras.  Then there is a natural map of spectra
\[
\KK_{\mathcal G}({A},{B}) \wedge \KK_{\mathcal G}(B,{C}) \rightarrow \KK_{\mathcal G}({A},{C})
\]
defined by the formula
\[
\alpha \wedge \beta \mapsto \alpha \sharp \beta \qquad \alpha \in F^0(J^m {A}, {B}),\ \beta \in F^0(J^n {B},{C})
\]

Further, the above product is associative.  To be precise, let
$\alpha \in F_{\mathcal G}(J^m {A},{B})$, $\beta \in F_{\mathcal
G}(J^n {B},{C})$, and $\gamma \in F_{\mathcal G}(J^p {C},{D})$. Then
$(\alpha \sharp \beta )\sharp \gamma = \alpha \sharp (\beta \sharp
\gamma )$. \noproof
\end{theorem}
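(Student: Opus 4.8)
The plan is to follow the proof of theorem \ref{KKprod} essentially line by line, replacing the functor $(-)^H$ by the stabilisation functor $-\otimes {\mathcal K}$ and the non-equivariant classifying map by its equivariant counterpart from proposition \ref{classmape}. First I would observe that the product $\sharp$ of the preceding definition assembles into a natural, continuous map of function spaces
\[
F_{\mathcal G}(J^{2m}A,\Sigma^m B)\wedge F_{\mathcal G}(J^{2n}B,\Sigma^n C)\rightarrow F_{\mathcal G}(J^{2m+2n}A,\Sigma^{m+n}C),
\]
and that this map is $S_m\times S_n$-equivariant. Indeed, the symmetric group actions are by permutation of suspension coordinates, and each of the three arrows composing $\alpha\sharp\beta$ — the map $J^{2n}\alpha$, the shuffle $s$, and $\Sigma^m(\beta\otimes 1)$ — is manifestly compatible with such permutations of the first $m$ and last $n$ coordinates.

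Next I would verify compatibility with the structure maps of the spectra. As in theorem \ref{KKprod}, this reduces to the commutativity of a diagram whose nontrivial vertical maps are obtained by iterating the classifying map of the equivariant cone sequence
\[
0\rightarrow \Sigma B\otimes {\mathcal K}\rightarrow CB\otimes {\mathcal K}\rightarrow B\otimes {\mathcal K}\rightarrow 0,
\]
which was shown to be semi-split just before the definition of $\KK_{\mathcal G}(A,B)$. The required commutativity then follows from the naturality of the equivariant classifying map construction in proposition \ref{classmape}, exactly as in the non-equivariant argument; no new ideas are needed beyond checking that every arrow in the diagram is a $\mathcal G$-equivariant map.

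Associativity would be handled by reproducing the large commutative diagram from the proof of theorem \ref{KKprod}, whose left column computes $(\alpha\sharp\beta)\sharp\gamma$ and whose right column computes $\alpha\sharp(\beta\sharp\gamma)$. The two squares involving the shuffle map $s\colon J^k\Sigma^l(-)\rightarrow \Sigma^l J^k(-)$ commute by naturality of $s$, and the identifications at the foot of the diagram use the stability isomorphism $(C\otimes {\mathcal K})\otimes {\mathcal K}\cong C\otimes {\mathcal K}$, the equivariant analogue of corollary \ref{AHCAC}. Since ${\mathcal K}$ carries the trivial $\mathcal G$-action, this isomorphism is induced by the standard non-equivariant isomorphism ${\mathcal K}\otimes {\mathcal K}\cong {\mathcal K}$ and is therefore automatically equivariant.

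The substance of the proof is thus entirely a matter of transporting the non-equivariant ingredients to the equivariant setting, and I expect the only point requiring genuine care to be the systematic verification that $\mathcal G$-equivariance is preserved at each stage: that the classifying maps produced by proposition \ref{classmape}, the shuffle maps $s$, and the stabilisation isomorphisms are all natural transformations of functors on $\mathcal G$-$C^\ast$-algebras. Once this is confirmed, the diagrams from theorem \ref{KKprod} commute verbatim and the conclusion follows.
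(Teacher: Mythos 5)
Your proposal is correct and coincides with the paper's own treatment: the paper proves this theorem simply by asserting that the argument of Theorem \ref{KKprod} carries over verbatim, with $(-)^H$ replaced by $-\otimes{\mathcal K}$ and the classifying maps taken from Proposition \ref{classmape}, which is exactly the translation you carry out. Your additional checks of equivariance at each stage are the right points to verify and introduce nothing beyond the paper's intended route.
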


As before, the following result is obvious.

\begin{proposition}
Let $\alpha \colon A\rightarrow B$ and $\beta \colon B \rightarrow C$ be equivariant maps.  Then $\alpha \sharp \beta = \beta \circ \alpha$.
\noproof
\end{proposition}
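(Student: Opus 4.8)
The plan is to unwind the definition of the product $\sharp$ in the degenerate case $m=n=0$, exactly as for the non-equivariant statement in Proposition~\ref{pco}. An equivariant map $\alpha \colon A \rightarrow B$ is regarded as an element of $F_{\mathcal G}(J^0 A, \Sigma^0 B) = F_{\mathcal G}(A,B)$ by post-composing with the canonical corner inclusion $\iota \colon B \rightarrow B \otimes {\mathcal K}$, $b \mapsto b \otimes e$, where $e \in {\mathcal K}$ is a fixed rank-one projection; the map $\beta$ is treated the same way. Since ${\mathcal K}$ carries the trivial ${\mathcal G}$-action, this corner inclusion is itself equivariant.

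First I would specialise the composition defining $\alpha \sharp \beta$ to $m=n=0$. There the functors $J^{0}$ and $\Sigma^{0}$ are the identity, and the natural map $s \colon J^{0}\Sigma^{0}(B\otimes {\mathcal K}) \rightarrow \Sigma^{0} J^{0}(B\otimes {\mathcal K})$ is also the identity, so the whole composition collapses to
\[
A \stackrel{\iota\circ\alpha}{\rightarrow} B \otimes {\mathcal K} \stackrel{\beta \otimes 1}{\rightarrow} C \otimes {\mathcal K}.
\]
Next I would evaluate on an element $a \in A$: we have $(\iota \circ \alpha)(a) = \alpha(a)\otimes e$, and then $(\beta \otimes 1)(\alpha(a)\otimes e) = \beta(\alpha(a))\otimes e = (\beta\circ\alpha)(a)\otimes e$. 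This is precisely $\iota\big((\beta\circ\alpha)(a)\big)$, i.e. the image of $\beta\circ\alpha$ under the corner inclusion $C \rightarrow C\otimes{\mathcal K}$, which is exactly how $\beta\circ\alpha$ is viewed as an element of $F_{\mathcal G}(A,C)$. Hence $\alpha\sharp\beta = \beta\circ\alpha$.

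The only point needing care --- and the nearest thing to an obstacle in an otherwise immediate argument --- is the bookkeeping of the corner inclusions: one must confirm that the rank-one projection $e$ used to embed an ordinary map into $F_{\mathcal G}$ is the same one respected by $\beta\otimes 1$, so that no stabilisation shift in ${\mathcal K}$ is introduced and the two embeddings of $\beta\circ\alpha$ genuinely coincide. Once this is fixed, and since every map involved is equivariant, the identity follows with no further work, which is why the statement can be recorded without a detailed proof.
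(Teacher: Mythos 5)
Your unwinding of the definition at $m=n=0$ is exactly the verification the paper has in mind: it records this result as immediate ``by definition of our product,'' and your proof is that definition-chase made explicit, including the one genuinely delicate point (that the identification $\mathcal{K}\otimes\mathcal{K}\cong\mathcal{K}$ can be chosen to send $e\otimes e$ back to the fixed rank-one projection $e$, so the two embeddings of $\beta\circ\alpha$ into $F_{\mathcal G}(A,C)$ agree on the nose). This is correct and follows the same route as the paper.
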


The following result is proved in exactly the same way as theorems \ref{RS} and \ref{RS2}.

\begin{theorem}
Let $\mathcal G$ be a discrete groupoid, and let $A$ be a $\mathcal
G$-$C^\ast$-algebra.  Then the spectrum $\KK_{\mathcal G} ({A},{A})$
is a symmetric ring spectrum.

Let $B$ be another $\mathcal G$-$C^\ast$-algebra.  Then the spectrum
$\KK_{\mathcal G}(A,B)$ is a symmetric $\KK_{\mathcal G}(\F ,\F
)$-module spectrum. \noproof
\end{theorem}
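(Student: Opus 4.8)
The plan is to follow the proofs of Theorems \ref{RS} and \ref{RS2} verbatim, substituting the equivariant constructions of this section for their non-equivariant analogues.

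For the ring spectrum assertion, I would first invoke Theorem \ref{KKprod2} to supply the associative multiplication $\mu \colon \KK_{\mathcal G}(A,A) \wedge \KK_{\mathcal G}(A,A) \rightarrow \KK_{\mathcal G}(A,A)$. The sphere spectrum $e = (S^0, \star, \star, \ldots)$ has a single non-basepoint in its zeroth space; I would define the unit $\eta \colon e \rightarrow \KK_{\mathcal G}(A,A)$ by sending that point to the element of $F_{\mathcal G}(A,A)$ coming from the identity equivariant map $1 \colon A \rightarrow A$ (composed with the inclusion $A \hookrightarrow A \otimes \mathcal K$), and sending every basepoint to a basepoint. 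Commutativity of the unit diagram then reduces, exactly as in Theorem \ref{RS}, to the equivariant version of the sharp-composite identity: for equivariant maps one has $\alpha \sharp \beta = \beta \circ \alpha$, so $1 \sharp \beta = \beta = \beta \sharp 1$ on the generating points, which is precisely what is needed after smashing with $e$.

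For the module assertion I need the equivariant analogue of the external product $\Delta$ of Proposition \ref{dmap}; I would establish this first. Because $\F$ is the constant $\mathcal G$-$C^\ast$-algebra, the equivariant tensor product gives $\F \otimes A \cong A$, so the map I want is $\Delta \colon \KK_{\mathcal G}(\F, \F) \rightarrow \KK_{\mathcal G}(\F \otimes A, \F \otimes A) \cong \KK_{\mathcal G}(A,A)$. On a representative $\alpha \colon J^{2n}\F \rightarrow \Sigma^n(\F \otimes \mathcal K)$ I would define $\Delta(\alpha)$ by forming $\alpha \otimes 1$, composing with the natural equivariant rearrangement map $(\F \otimes \mathcal K)\otimes A \rightarrow (\F \otimes A)\otimes \mathcal K$, and precomposing with the $n$-fold iterate of the equivariant classifying map $\gamma \colon J(\F \otimes A) \rightarrow (J\F)\otimes A$. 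With $\Delta$ available, the module product is the composite
\[
\KK_{\mathcal G}(\F,\F) \wedge \KK_{\mathcal G}(A,B) \stackrel{\Delta \wedge 1}{\rightarrow} \KK_{\mathcal G}(A,A) \wedge \KK_{\mathcal G}(A,B) \stackrel{\mu}{\rightarrow} \KK_{\mathcal G}(A,B),
\]
exactly as in Theorem \ref{RS2}, and the module axiom follows from associativity of $\mu$ (Theorem \ref{KKprod2}) together with compatibility of $\Delta$ with the product.

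The main obstacle is the construction of $\gamma$, and hence of the equivariant $\Delta$. This rests on the equivariant analogues of Theorem \ref{sstens} and Corollary \ref{jtens}, asserting that tensoring a semi-split equivariant short exact sequence by a fixed $\mathcal G$-$C^\ast$-algebra again produces a semi-split equivariant short exact sequence. These statements are not recorded explicitly in this section, so I would supply them; the proofs are the equivariant versions of those already given, checking exactness over each object of $\Ob(\mathcal G)$ and transporting the completely positive splitting through the tensor product. This verification is the genuine content that must be in place before the claim that the result follows ``in exactly the same way as Theorems \ref{RS} and \ref{RS2}'' can be fully justified.
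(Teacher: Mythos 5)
Your proposal is correct and follows exactly the route the paper intends: the paper gives no proof beyond the remark that the result "is proved in exactly the same way as theorems \ref{RS} and \ref{RS2}", and your argument is precisely that transcription, using Theorem \ref{KKprod2} for the product, the identity equivariant map for the unit, and an equivariant $\Delta$ for the module structure. Your observation that the equivariant analogues of Theorem \ref{sstens} and Corollary \ref{jtens} must be supplied to construct $\gamma$ and hence $\Delta$ is a fair and useful note on what the paper leaves implicit, and your sketch of how to prove them is the right one.
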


Let $\theta \colon {\mathcal G}\rightarrow {\mathcal H}$ be a
functor between groupoids, and let $A$ be an $\mathcal
H$-$C^\ast$-algebra.  Abusing notation, we can also regard $A$ as a
$\mathcal G$-$C^\ast$-algebra; we write $A(a)=A(\theta (a))$ for
each object $a\in \Ob( {\mathcal G})$, and define a homomorphism
$g=\theta (g) \colon A(\theta (a))\rightarrow A(\theta (b))$ for
each morphism $g\in \Hom (a,b)_{\mathcal G}$.

If $A$ and $B$ are $\mathcal H$-$C^\ast$-algebras, then we have an
induced map $\theta^\ast \colon F_{\mathcal H}(A,B)\rightarrow
F_{\mathcal G}(A,B)$ defined by the observation that any $\mathcal
H$-equivariant map between $A$ and $B\otimes {\mathcal K}$ is also
$\mathcal G$-equivariant.  This induced map is natural in the
variables $A$ and $B$.  Going slightly further, we have the
following easy to check result.

\begin{proposition} \label{restriction}
Let $\theta \colon {\mathcal G}\rightarrow {\mathcal H}$ be a
functor between groupoids, and let $A$ and $B$ be $\mathcal
H$-$C^\ast$-algebras.  Then there is an induced map of spectra
$\theta^\ast \colon \KK_{\mathcal H}(A,B)\rightarrow \KK_{\mathcal
G}(A,B)$.  This induced map is compatible with the product in the
sense that we have a commutative diagram
\[
\begin{array}{ccc}
\KK_{\mathcal H}(A,B)\wedge \KK_{\mathcal H}(B,C) & \rightarrow & \KK_{\mathcal H}(A,C) \\
\downarrow & & \downarrow \\
\KK_{\mathcal G}(A,B)\wedge \KK_{\mathcal G}(B,C) & \rightarrow & \KK_{\mathcal G}(A,C) \\
\end{array}
\]
where the horizontal map is defined by the product and the vertical maps are restriction maps.
\noproof
\end{proposition}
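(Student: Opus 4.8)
The plan is to show that the restriction functor $\theta^\ast$ from $\mathcal H$-$C^\ast$-algebras to $\mathcal G$-$C^\ast$-algebras is compatible with every construction used to build the spectrum, and then to read off both the map of spectra and the product compatibility by naturality. First I would record the elementary compatibilities. Since $(\theta^\ast A)(a) = A(\theta(a))$ and the constructions $\Sigma$, $C$ and $-\otimes {\mathcal K}$ are defined object-by-object on the groupoid, restriction commutes with them up to natural isomorphism: $\theta^\ast (\Sigma A)\cong \Sigma(\theta^\ast A)$, $\theta^\ast(CA)\cong C(\theta^\ast A)$ and $\theta^\ast(B\otimes {\mathcal K})\cong (\theta^\ast B)\otimes {\mathcal K}$. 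Moreover, because exactness and the completely positive splitting of a semi-split sequence are tested pointwise, restriction sends the semi-split sequence $0\to \Sigma B\otimes {\mathcal K}\to CB\otimes {\mathcal K}\to B\otimes {\mathcal K}\to 0$ of $\mathcal H$-$C^\ast$-algebras to the corresponding semi-split sequence of $\mathcal G$-$C^\ast$-algebras.

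The one step requiring care is the tensor algebra functor $J$. The underlying $\ast$-algebras of $T_\mathrm{alg}(\theta^\ast A)$ and $\theta^\ast(T_\mathrm{alg}A)$ coincide, so I would compare the two universal norms. Any $\mathcal H$-equivariant completely positive map restricts to a $\mathcal G$-equivariant one, so the supremum defining the $\mathcal G$-norm runs over a larger family; hence the $\mathcal G$-norm dominates the restricted $\mathcal H$-norm. Equivalently, applying the universal property of $T(\theta^\ast A)$ to the restriction of $\sigma \colon A\to TA$ (which is a $\mathcal G$-equivariant completely positive map) produces a natural equivariant comparison map $T(\theta^\ast A)\to \theta^\ast(TA)$, and hence $J(\theta^\ast A)\to \theta^\ast(JA)$, going in the direction suitable for precomposition. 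Combining this with the previous paragraph, the classifying map construction of Proposition \ref{classmape} is intertwined with $\theta^\ast$: restriction preserves the semi-split sequence and its completely positive splitting, and the induced $\ast$-functor $\tau$ is the unique one realising its universal property, so $\theta^\ast(\eta(\alpha))$ and $\eta(\theta^\ast \alpha)$ agree after composing with the $J$-comparison map.

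With these compatibilities in hand the map of spectra is assembled formally. On the $n$-th space I define $\theta^\ast \colon F_{\mathcal H}(J^{2n}A,\Sigma^n B)\to F_{\mathcal G}(J^{2n}(\theta^\ast A),\Sigma^n(\theta^\ast B))$ by restricting an $\mathcal H$-equivariant map $J^{2n}A\to \Sigma^n B\otimes {\mathcal K}$ to a $\mathcal G$-equivariant map and precomposing with the iterated $J$-comparison map. This is continuous for the compact-open topologies, and it commutes with the $S_n$-action because $\theta^\ast$ commutes with permuting the suspension coordinates. Compatibility with the structure maps $\epsilon = \eta\circ\eta$ is exactly the intertwining of the classifying map established above, so we obtain a map of symmetric spectra $\theta^\ast \colon \KK_{\mathcal H}(A,B)\to \KK_{\mathcal G}(A,B)$.

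Finally, the product $\alpha \sharp \beta$ is built from $J^{2n}\alpha$, the natural map $s\colon J^k\Sigma^l\to \Sigma^l J^k$, and $\beta \otimes 1$. The first and third are intertwined with $\theta^\ast$ by the compatibilities already established, and $s$ is itself an iterate of a classifying map, so it too is intertwined by the previous step. Chasing the defining composition therefore yields $\theta^\ast(\alpha \sharp \beta) = (\theta^\ast \alpha)\sharp(\theta^\ast \beta)$, which is precisely the asserted commutative square. The only genuinely non-formal point in the whole argument is the comparison of universal norms for $J$ and the resulting direction of the comparison map; everything else is bookkeeping forced by the object-by-object nature of the restriction functor.
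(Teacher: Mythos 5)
Your argument is correct and is essentially the argument the paper has in mind: the paper states this proposition without proof, defining $\theta^\ast$ simply by the observation that an $\mathcal H$-equivariant map $A\rightarrow B\otimes{\mathcal K}$ is also $\mathcal G$-equivariant and leaving all naturality checks to the reader as ``easy to check.'' The one point you rightly flag as non-formal --- that the universal norms on $T_\mathrm{alg}A$ taken over $\mathcal H$-equivariant versus $\mathcal G$-equivariant test maps may differ, so that one only obtains a comparison homomorphism $J(\theta^\ast A)\rightarrow\theta^\ast(JA)$ in the direction suited to precomposition --- is exactly the detail the paper glosses over, and you identify the correct direction and resolve it correctly via the universal property of the equivariant tensor algebra.
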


The above map $f^\ast$ is called the {\em restriction map}.

\section{Descent}

Apart from the last theorem, the definitions and results in this section come from \cite{Mitch6}.

Let $\mathcal G$ be a discrete groupoid, and let $A$ be a $\mathcal
G$-$C^\ast$-algebra.  Then we define the {\em convolution algebroid}
$A{\mathcal G}$ to be the algebroid with the same set of objects as
the groupoid $\mathcal G$, and morphism sets
\[
\Hom (a,b)_{A{\mathcal G}} = \{ \sum_{i=1}^m x_i g_i \ |\ x_i \in A(b), g_i \in \Hom (a,b)_{\mathcal G},\ m\in \N \}
\]

Composition of morphisms is defined by the formula
\[
\left( \sum_{i=1}^m x_i g_i \right) \left( \sum_{j=1}^n y_j h_i \right) = \sum_{i,j=1}^{m,n} x_i g_i(y_j) g_i h_j
\]

Further, we have an involution
\[
\left( \sum_{i=1}^m x_i g_i \right)^\ast = \sum_{i=1}^m g_i^{-1} (x_i^\ast )g_i^{-1}
\]

Recall that we write $\mathcal L$ to denote the category of all
Hilbert spaces and bounded linear maps.  We write ${\mathcal L}(H)$
to denote the $C^\ast$-algebra of all bounded linear maps on a
Hilbert space $H$.

\begin{definition}
Let $\mathcal G$ be a discrete groupoid.  Then a {\em unitary
representation} of $\mathcal G$ is a functor $\rho \colon {\mathcal
G}\rightarrow {\mathcal L}$ such that $\rho (g^{-1}) =\rho (g)^\ast$
for each morphism $g$ in the groupoid $\mathcal G$.

Let $A$ be a $\mathcal G$-$C^\ast$-algebra.  Then a {\em covariant
representation of $A$} is a pair $(\rho , \pi )$, where $\rho$ is a
unitary representation of the groupoid $\mathcal G$, and $\pi$ is a
set of representations $\pi \colon A(a) \rightarrow {\mathcal
L}(\rho (a))$, where $a\in \Ob ({\mathcal G})$, such that
\[
\rho (g) \pi (x) = \pi (gx) \rho (g)
\]
for each element $x\in A(a)$ and morphism $g\in \Hom (a,b)_{\mathcal G}$
\end{definition}

Given a covariant representation $(\rho , \pi )$, we have a
$C^\ast$-functor $(\rho , \pi)_\ast \colon A{\mathcal G} \rightarrow
{\mathcal L}$ defined by mapping the object $a\in \Ob ({\mathcal
G})$ to the Hilbert space $\rho (a)$, and the morphism $\sum_{i=1}^m
x_i g_i \in \Hom (a,b)_{A{\mathcal G}}$ to the bounded linear map
$\sum_{i=1}^m \pi (x_i) \rho (g_i) \colon \rho (a) \rightarrow \rho
(b)$.

A proof of the following result can be found in \cite{Mitch6}.

\begin{proposition} \label{allcor}
Let $A$ be a $\mathcal G$-$C^\ast$-algebra.  Then any
$C^\ast$-functor $A{\mathcal G}\rightarrow {\mathcal L}$ takes the
form $(\rho , \pi )_\ast$ for some covariant representation $(\rho ,
\pi )$. \noproof
\end{proposition}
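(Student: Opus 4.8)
The plan is to reverse-engineer the covariant pair from the $C^\ast$-functor, exactly as one recovers a covariant representation from a representation of a crossed product. Write $\Phi \colon A{\mathcal G}\rightarrow {\mathcal L}$ for the given $C^\ast$-functor and set $H_a = \Phi(a)$ for each $a\in \Ob({\mathcal G})$. I would first produce $\pi$. For $x\in A(a)$ the element $x\cdot 1_a$ lies in $\Hom(a,a)_{A{\mathcal G}}$, and the composition and involution formulas give $(x\cdot 1_a)(y\cdot 1_a)=xy\cdot 1_a$ and $(x\cdot 1_a)^\ast = x^\ast \cdot 1_a$; hence $\pi(x) := \Phi(x\cdot 1_a)$ defines a $\ast$-representation $\pi\colon A(a)\rightarrow {\mathcal L}(H_a)$.

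Next I would define $\rho$. In the unital case one simply sets $\rho(g)=\Phi(1_b\cdot g)$ for $g\in \Hom(a,b)_{\mathcal G}$; in general I would fix an approximate unit $(e_\lambda)$ of $A(b)$ and define $\rho(g)$ as the strong limit of $\Phi(e_\lambda \cdot g)$. To see this limit exists, note that $g^{-1}$ induces a $\ast$-isomorphism $A(b)\rightarrow A(a)$ carrying $(e_\lambda)$ to an approximate unit $(e'_\lambda)$ of $A(a)$, and the convolution relations give $(e_\lambda g)^\ast(e_\mu g)=g^{-1}(e_\lambda e_\mu)\cdot 1_a$; applying $\Phi$ yields $\Phi((e_\lambda - e_\mu)g)^\ast \Phi((e_\lambda -e_\mu)g) = \pi((e'_\lambda - e'_\mu)^2)$, so $\|\Phi(e_\lambda g)\xi - \Phi(e_\mu g)\xi\| = \|\pi(e'_\lambda - e'_\mu)\xi\|$ and the net is Cauchy. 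The same manipulation gives the key identity $\Phi(x\cdot g)=\pi(x)\rho(g)$ for all $x\in A(b)$, since $\Phi((x\cdot 1_b)(e_\lambda g)) = \Phi(xe_\lambda \cdot g)\rightarrow \Phi(x\cdot g)$. Granting this identity it is immediate that $\Phi(\sum_i x_i g_i) = \sum_i \pi(x_i)\rho(g_i)$, that is, $\Phi = (\rho,\pi)_\ast$.

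It then remains to verify that $(\rho,\pi)$ is a covariant representation. Functoriality $\rho(gh)=\rho(g)\rho(h)$ follows by pairing against $\pi(x)$: computing $\Phi(xg)\rho(h)$ via the composition formula gives $\pi(x)\rho(g)\rho(h)=\pi(x)\rho(gh)$ for all $x$. Covariance $\rho(g)\pi(x)=\pi(gx)\rho(g)$ follows by evaluating both $(e_\lambda g)(x\cdot 1_a)$ and $(gx\cdot 1_b)(e_\lambda g)$ and passing to the limit, both sides reducing to $\Phi(g(x)\cdot g)$. Once $\rho$ is a functor on a groupoid, $\rho(g)\rho(g^{-1})=\rho(1_b)$ and $\rho(g^{-1})\rho(g)=\rho(1_a)$ make $\rho(g)$ invertible; since both $\rho(g)$ and $\rho(g^{-1})$ are contractions (images under the norm-decreasing functor $\Phi$ of elements of norm at most $1$, by Proposition \ref{AFprop}), $\rho(g)$ is unitary and $\rho(g^{-1})=\rho(g)^\ast$.

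The main obstacle is the non-unital case, concentrated in the requirement $\rho(1_a)=\mathrm{id}_{H_a}$: the strong limit only yields $\rho(1_a)=\lim_\lambda \pi(e_\lambda)$ (taken strongly), which is the projection onto the essential subspace $\overline{\pi(A(a))H_a}$. I would dispose of this by the usual reduction to the non-degenerate case. A short computation with the convolution relations shows that, for every morphism $m\in \Hom(a,b)_{A{\mathcal G}}$, the operator $\Phi(m)$ annihilates the orthogonal complement of $\overline{\pi(A(a))H_a}$ and has range inside $\overline{\pi(A(b))H_b}$, so $\Phi$ is supported on the essential subspaces; restricting to them makes $\pi$ non-degenerate, whence $\pi(e_\lambda)\rightarrow \mathrm{id}$ strongly and the preceding argument applies verbatim. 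The remaining point, that the strong limit defining $\rho(g)$ is independent of the chosen approximate unit, is then routine.
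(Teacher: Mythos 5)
The paper does not actually prove this proposition --- it is stated with a pointer to \cite{Mitch6} and marked as needing no proof here --- so there is nothing internal to compare your argument against. Taken on its own terms, your construction is the standard disintegration of a crossed-product representation into a covariant pair, and the computations are right: $(x\,1_a)(y\,1_a)=xy\,1_a$ and $(x\,1_a)^\ast=x^\ast 1_a$ make $\pi(x)=\Phi(x\,1_a)$ a $\ast$-representation; $(e_\lambda g)^\ast(e_\mu g)=g^{-1}(e_\lambda e_\mu)\,1_a$ gives the strong Cauchy estimate (provided you take the approximate unit increasing, so that $\pi(e'_\lambda)$ converges strongly to the essential projection); and the identities $\Phi(xg)=\pi(x)\rho(g)$, $\rho(gh)=\rho(g)\rho(h)$ and $\rho(g)\pi(x)=\pi(g(x))\rho(g)$ all drop out of the convolution relations exactly as you describe. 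When every $\pi\colon A(a)\to{\mathcal L}(H_a)$ is nondegenerate, this is a complete and correct proof.

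The gap is in your final paragraph. Passing to the essential subspaces $H_a^{\mathrm{ess}}=\overline{\pi(A(a))H_a}$ does make the argument run, but it proves a weaker statement: the covariant representation you produce has $\rho(a)=H_a^{\mathrm{ess}}$, whereas $\Phi(a)=H_a$, so $(\rho,\pi)_\ast$ coincides with $\Phi$ only after this restriction. To recover $\Phi$ on the nose you would have to extend each partial isometry $\rho(g)\colon H_a^{\mathrm{ess}}\to H_b^{\mathrm{ess}}$ to a unitary $H_a\to H_b$, and that can be impossible: take $\mathcal G$ with two objects and a single isomorphism $g\colon a\to b$, $A={\mathbb C}$ as a constant $\mathcal G$-$C^\ast$-algebra, $H_a={\mathbb C}$, $H_b={\mathbb C}^2$, $\pi_b(1)$ a rank-one projection $P$, and $\Phi(1\cdot g)$ an isometry of ${\mathbb C}$ onto the range of $P$; one checks directly from the convolution relations that this is a $\ast$-functor, yet no unitary ${\mathbb C}\to{\mathbb C}^2$ exists, so no covariant representation on these spaces induces it. So either the proposition is read as implicitly restricting to nondegenerate functors (in which case your reduction is unnecessary and should be deleted), or the honest conclusion of your argument is that $\Phi$ vanishes off the essential subspaces and agrees there with some $(\rho,\pi)_\ast$ --- which is all the paper actually uses, since the proposition only serves to identify the suprema defining $\|-\|_{\mathrm{max}}$ and $\|-\|_r$, and these are unaffected by the degenerate part. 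As written, the phrase ``the preceding argument applies verbatim'' conceals the fact that the objects of the representation have changed.
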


Let $A$ be a $\mathcal G$-$C^\ast$-algebra.  Fix an object $a\in \Ob
({\mathcal G})$, and choose a representation $\alpha \colon
A(a)\rightarrow {\mathcal L}(H)$ on some Hilbert space $H$.  For
each object $b\in \Ob ({\mathcal G})$, let $l^2 (a,b)$ be the
Hilbert space consisting of all sequences $(\eta_g )_{g\in \Hom
(a,b)_{\mathcal G}}$ in the space $H$ such that the series
$\sum_{g\in \Hom (a,b)_{\mathcal G}} \| \eta_g \|^2$ converges.

We can define a unitary representation of the groupoid $\mathcal G$
by mapping the object $b\in \Ob ({\mathcal G})$ to the Hilbert space
$l^2 (a,b)$, and the morphism $g\in \Hom (b,c)_{\mathcal G}$ to the
opertator $\rho (g) \colon l^2 (a,b)\rightarrow l^2 (a,c)$ defined
by translation.

There are corresponding representations $\pi \colon A(b) \rightarrow {\mathcal L}(l^2 (a,b))$ defined by writing
\[
\pi (x)((\eta_g )_{g\in \Hom (a,b)_{\mathcal G}}) = (\alpha (g^{-1}(x))\eta_g )_{g\in \Hom (a,b)_{\mathcal G}}
\]

It is straightforward to verify that the pair $(\rho, \pi )$ is a covariant representation of $A$.

\begin{definition}
A covariant representation of the type described above is called a {\em regular representation}.
\end{definition}

It is shown in \cite{Mitch6} that we can define $C^\ast$-norms $\| - \|_\mathrm{max}$ and $\| - \|_r$ on the algebroid $A{\mathcal G}$ by writing
$$\| \mu \|_\mathrm{max} = \sup \{ (\rho , \pi )_\ast (\mu ) \ |\ (\rho , \pi ) \textrm{ is a covariant representation of $A$ } \}$$
and
$$\| \mu \|_r = \sup \{ (\rho , \pi )_\ast (\mu ) \ |\ (\rho , \pi ) \textrm{ is a regular representation of $A$ } \}$$
respectively for any morphism $\mu$ in the category $A{\mathcal G}$.

\begin{definition}
The {\em full crossed product}, $A\rtimes {\mathcal G}$ is the
$C^\ast$-category defined by completion of the algebroid $A{\mathcal
G}$ with respect to the norm $\| - \|_\mathrm{max}$.

The {\em reduced crossed product}, $A\rtimes_r {\mathcal G}$ is the
$C^\ast$-category defined by completion of the algebroid $A{\mathcal
G}$ with respect to the norm $\| - \|_r$.
\end{definition}

As a special case of the above construction, for any groupoid
$\mathcal G$, we can define (see \cite{DL} and \cite{Mitch2}) the
full and reduced $C^\ast$-categories $C^\ast {\mathcal G} = {\mathbb
C}\rtimes {\mathcal G}$ and $C^\ast_r {\mathcal G} = {\mathbb
C}\rtimes_r {\mathcal G}$ respectively.

Let $\mathcal G$ be a groupoid, and let $\alpha \colon A\rightarrow
B$ be an equivariant map between $\mathcal G$-$C^\ast$-algebras.
Then we have an induced $C^\ast$-functor $\alpha_\ast \colon
A{\mathcal G}\rightarrow B{\mathcal G}$ defined to be the identity
on the set of objects, and by the formula
\[
f_\ast \left( \sum_{i=1}^m x_i g_i \right) = \sum_{i=1}^m \alpha (x_i) g_i \qquad x_i \in A(b),\ g_i \in \Hom (a,b)_{\mathcal G}
\]
on morphism sets.  This functor is continuous with respect to either norm.

Let $f\colon {\mathcal G}\rightarrow {\mathcal H}$ be a functor
between groupoids, and let $A$ be a $\mathcal H$-$C^\ast$-algebra.
Then we have an induced $C^\ast$-functor $f_\ast \colon A{\mathcal
G}\rightarrow A{\mathcal H}$ defined to be the functor $f$ on the
set of objects, and by the formula
\[
f_\ast \left( \sum_{i=1}^m x_i g_i \right) = \sum_{i=1}^m x_i f(g_i) \qquad x_i \in A(b),\ g_i \in \Hom (a,b)_{\mathcal G}
\]
on morphism sets.

This functor is continuous with respect to the norm $\|
-\|_\mathrm{max}$, and continuous with respect to the norm $\| -
\|_r$ if the functor is faithful.  We thus have the following
result.

\begin{proposition}
Let $\mathcal G$ be a groupoid.  Then the assignments $A\mapsto
A\rtimes_r {\mathcal G}$ and $A\mapsto A\rtimes {\mathcal G}$ are
functors from the category of $\mathcal G$-$C^\ast$-algebras and
equivariant maps to the category of $C^\ast$-categories and
$C^\ast$-functors.

Let $f\colon {\mathcal G}\rightarrow {\mathcal H}$ be a functor
between groupoids, and let $A$ be an $\mathcal H$-$C^\ast$-algebra.
Then we have a functorially induced $C^\ast$-functor $f_\ast \colon
A\rtimes {\mathcal G}\rightarrow A\rtimes {\mathcal H}$.  If the
functor $f$ is faithful, then we also have a functorially induced
$C^\ast$-functor $f_\ast \colon A\rtimes_r {\mathcal G}\rightarrow
A\rtimes_r {\mathcal H}$. \noproof
\end{proposition}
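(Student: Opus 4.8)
The plan is to observe that essentially all of the analytic content has already been established in the discussion immediately preceding the statement, where the induced maps are constructed at the level of the convolution algebroids and their continuity with respect to the relevant norms is recorded. What remains is to extend these bounded $C^\ast$-functors to the completions and to verify the functor axioms, both of which are routine by density. Throughout I would use that, by definition, the algebroid $A{\mathcal G}$ sits densely inside both $A\rtimes {\mathcal G}$ and $A\rtimes_r {\mathcal G}$.

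First I would treat the assignments $A\mapsto A\rtimes {\mathcal G}$ and $A\mapsto A\rtimes_r {\mathcal G}$. Given an equivariant map $\alpha\colon A\rightarrow B$, we already have a $C^\ast$-functor $\alpha_\ast\colon A{\mathcal G}\rightarrow B{\mathcal G}$ on algebroids which is continuous with respect to both $\| - \|_\mathrm{max}$ and $\| - \|_r$. A standard extension-by-continuity argument then yields $C^\ast$-functors $\alpha_\ast\colon A\rtimes {\mathcal G}\rightarrow B\rtimes {\mathcal G}$ and $\alpha_\ast\colon A\rtimes_r {\mathcal G}\rightarrow B\rtimes_r {\mathcal G}$. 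Functoriality is checked on the dense subalgebroid: the identity $(1_A)_\ast = 1$ and the composition rule $(\beta\circ\alpha)_\ast = \beta_\ast\circ\alpha_\ast$ follow at once from the defining formula $\alpha_\ast(\sum_i x_i g_i) = \sum_i \alpha(x_i)g_i$, since $\alpha$ acts only on the coefficients $x_i$ and leaves the groupoid elements $g_i$ untouched. By continuity these identities persist on the completions.

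Next I would treat a functor $f\colon {\mathcal G}\rightarrow {\mathcal H}$ between groupoids together with an ${\mathcal H}$-$C^\ast$-algebra $A$. The preceding discussion supplies the algebroid map $f_\ast\colon A{\mathcal G}\rightarrow A{\mathcal H}$, which is continuous for $\| - \|_\mathrm{max}$ in all cases and hence extends to a $C^\ast$-functor $f_\ast\colon A\rtimes {\mathcal G}\rightarrow A\rtimes {\mathcal H}$. When $f$ is faithful, the same map is also continuous for $\| - \|_r$, and so extends to $f_\ast\colon A\rtimes_r {\mathcal G}\rightarrow A\rtimes_r {\mathcal H}$. The functoriality relation $(g\circ f)_\ast = g_\ast\circ f_\ast$ is again verified on the dense algebroid using $f_\ast(\sum_i x_i g_i) = \sum_i x_i f(g_i)$ and then extended by continuity.

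The one genuinely delicate point is the necessity of faithfulness in the reduced case, and this is precisely what the cited continuity statements encapsulate, so I would not reprove it but only flag it as the crux. The reduced norm is computed as a supremum over regular representations, which are built from the Hilbert spaces $l^2(a,b)$ indexed by the morphism sets $\Hom (a,b)_{\mathcal G}$; if $f$ identifies distinct morphisms of ${\mathcal G}$, the pullbacks of regular representations of $A{\mathcal H}$ need not dominate $\| - \|_r$ on $A{\mathcal G}$, so the induced map could fail to be norm-decreasing. For the full crossed product no such hypothesis is needed, since the full norm is the supremum over all covariant representations and these pull back along $f$ without loss. Thus the main obstacle is conceptual rather than computational, and it has already been dispatched in the passage preceding the statement; the proof itself is then a formal extension-and-verification argument.
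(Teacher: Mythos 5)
Your proposal is correct and matches the paper's (implicit) argument exactly: the paper states this proposition with no proof, relying precisely on the preceding continuity assertions for $\alpha_\ast$ and $f_\ast$ on the convolution algebroids, followed by the routine extension to the completions and verification of the functor axioms on the dense subalgebroids. Your added remarks on why faithfulness is needed only in the reduced case are a sensible gloss on what the paper takes for granted.
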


\begin{theorem} \label{descente}
Let $\mathcal G$ be a groupoid, and let $A$ and $B$ be $\mathcal G$-$C^\ast$-algebras.  Then there are maps
\[
D\colon \KK_{\mathcal G} (A,B)\rightarrow \KK(A\rtimes {\mathcal G},
B\rtimes {\mathcal G}) \qquad D_r\colon \KK_{\mathcal G}
(A,B)\rightarrow \KK(A\rtimes_r {\mathcal G}, B\rtimes_r {\mathcal
G})
\]
which compatible with the product in the sense that
we have commutative diagrams
\[
\begin{array}{ccc}
\KK_{\mathcal G}({A},{B})\wedge \KK_{\mathcal G}({B},{C}) & \rightarrow & \KK (A,{C}) \\
\downarrow & & \downarrow \\
\KK ({A}\rtimes {\mathcal G},{B}\rtimes {\mathcal G})\wedge \KK ({B}\rtimes {\mathcal G},{C}\rtimes {\mathcal G}) & \rightarrow & \KK ({A}\rtimes {\mathcal G},{C}\rtimes {\mathcal G}) \\
\end{array}
\]
and
\[
\begin{array}{ccc}
\KK_{\mathcal G}({A},{B})\wedge \KK_{\mathcal G}({B},{C}) & \rightarrow & \KK (A,{C}) \\
\downarrow & & \downarrow \\
\KK ({A}\rtimes_r {\mathcal G},{B}\rtimes_r {\mathcal G})\wedge \KK ({B}\rtimes_r {\mathcal G},{C}\rtimes_r {\mathcal G}) & \rightarrow & \KK ({A}\rtimes_r {\mathcal G},{C}\rtimes_r {\mathcal G}) \\
\end{array}
\]
where the horizontal maps are defined by the product, and the
vertical maps are copies of the map $D$ or $D_r$ respectively.
\end{theorem}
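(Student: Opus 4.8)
The plan is to build the descent map $D$ pointwise, following the construction of $\Delta$ in Proposition \ref{dmap} verbatim but with the tensor functor $-\otimes\mathcal C$ replaced by the crossed product functor $-\rtimes\mathcal G$. A point of the $n$-th space of $\KK_{\mathcal G}(A,B)$ is an equivariant map $\alpha\colon J^{2n}A\to\Sigma^n B\otimes\mathcal K$, and I must turn it into a $\ast$-functor $J^{2n}(A\rtimes\mathcal G)\to\Sigma^n(B\rtimes\mathcal G)^H$, i.e. a point of the $n$-th space of $\KK(A\rtimes\mathcal G,B\rtimes\mathcal G)$. The descent map will be the composite
\[
J^{2n}(A\rtimes\mathcal G)\stackrel{\gamma^{2n}}{\rightarrow}(J^{2n}A)\rtimes\mathcal G\stackrel{\alpha_\ast}{\rightarrow}(\Sigma^n B\otimes\mathcal K)\rtimes\mathcal G\stackrel{\beta'}{\rightarrow}\Sigma^n(B\rtimes\mathcal G)^H
\]
of a classifying map $\gamma^{2n}$ moving $J$ past the crossed product, the crossed product $\alpha_\ast$ of $\alpha$ itself, and a comparison $\ast$-functor $\beta'$ absorbing the stabilisations.

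First I would assemble the three ingredients. (i) Applying $-\rtimes\mathcal G$ to the semi-split sequence $0\to JA\to TA\to A\to 0$ gives $0\to(JA)\rtimes\mathcal G\to(TA)\rtimes\mathcal G\to A\rtimes\mathcal G\to 0$, which I claim is again semi-split; this is the crossed-product analogue of Theorem \ref{sstens} and reduces to showing that the equivariant completely positive splitting $\sigma\colon A\to TA$ induces a completely positive map $\sigma_\ast\colon A\rtimes\mathcal G\to(TA)\rtimes\mathcal G$. Feeding this sequence and the identity of $A\rtimes\mathcal G$ into Proposition \ref{classmap} yields a natural classifying $\ast$-functor $\gamma\colon J(A\rtimes\mathcal G)\to(JA)\rtimes\mathcal G$, iterated to $\gamma^{2n}$. (ii) Since $\Sigma B=C_0(0,1)\otimes B$ with $\mathcal G$ acting only on $B$, there is a natural isomorphism $(\Sigma B)\rtimes\mathcal G\cong\Sigma(B\rtimes\mathcal G)$, and similarly for iterated suspensions. (iii) Since $\mathcal K$ carries the trivial action, there is a natural isomorphism $(B\otimes\mathcal K)\rtimes\mathcal G\cong(B\rtimes\mathcal G)\otimes\mathcal K$; composing the canonical $\ast$-functor $H$ with the stability isomorphism $(B\rtimes\mathcal G)^H\otimes\mathcal K\cong(B\rtimes\mathcal G)^H$ coming from Corollary \ref{AHCAC} then produces a natural $\ast$-functor $(B\rtimes\mathcal G)\otimes\mathcal K\to(B\rtimes\mathcal G)^H$. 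The map $\beta'$ is the composite of (ii) and (iii), and $D(\alpha)$ is the displayed composition.

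Next I would verify that $D$ is a map of spectra and is compatible with the product. Compatibility with the $S_n$-action and with the structure maps $\epsilon=\eta^2$ is formal, since every constituent of $D$ is natural and the classifying map $\eta$ defining $\epsilon$ is the same construction as $\gamma$; naturality of the classifying map construction, exactly as exploited in Theorem \ref{KKprod}, supplies the commuting squares. The two product-compatibility diagrams then follow from the same diagram chase as the associativity argument of Theorem \ref{KKprod}: one expands the composite defining $\alpha\sharp\beta$ before and after applying $D$ and uses that $-\rtimes\mathcal G$ is a functor commuting, through $\gamma$, the suspension isomorphism, and $\beta'$, with $J$, $\Sigma$, the map $s$ of Proposition \ref{sdef}, and the $H$-stabilisation. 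The reduced map $D_r$ is built word for word with $\rtimes_r$ in place of $\rtimes$; here one needs in addition that the reduced crossed product of the sequence in (i) is still semi-split, which again reduces to complete positivity of $\sigma_\ast$ for the reduced norm.

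The main obstacle is step (i) together with the comparison in (iii). Establishing that crossed products preserve semi-splitness requires a Stinespring-type argument for covariant representations (Proposition \ref{allcor}), paralleling the proof that $q\otimes 1$ is completely positive but carried out on the convolution algebroid rather than on a spatial tensor product; and the $\ast$-functor of (iii) must be checked to be natural and to intertwine the $H$-stabilisation of the category-valued theory with the $\mathcal K$-stabilisation of the equivariant theory. Once these two structural facts are in hand, the remaining verifications — that $D$ is a spectrum map and that the product diagrams commute — are a naturality bookkeeping identical in form to Propositions \ref{dmap} and \ref{restriction}.
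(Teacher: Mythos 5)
Your construction $D(\alpha)=\beta'\circ\alpha_\ast\circ\gamma^{2n}$ is, up to notation, exactly the paper's $D(\alpha)=\delta\circ\beta\circ\alpha_\ast\circ\gamma^{n}$: the same classifying map $\gamma$ obtained from the semi-split sequence $0\to(JA)\rtimes{\mathcal G}\to(TA)\rtimes{\mathcal G}\to A\rtimes{\mathcal G}\to 0$ (established as in Theorem \ref{sstens}), the same induced map $\alpha_\ast$, and the same comparison functor into $\Sigma^n(B\rtimes{\mathcal G})^H$ (your $\beta'$ is the paper's $\delta\circ\beta$), with product compatibility checked by the same naturality bookkeeping. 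The proposal is correct and takes essentially the same route as the paper.
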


\begin{proof}
As in theorem \ref{sstens}, we can show that we have a semi-split
exact sequence
$$0 \rightarrow (JA)\rtimes_r {\mathcal
G}\stackrel{i_\ast}{\rightarrow} (T{A})\rtimes_r {\mathcal G}
\stackrel{\pi_\ast}{\rightarrow} {A}\rtimes_r {\mathcal G}
\rightarrow 0$$

We therefore have a natural $C^\ast$-functor $\gamma \colon
J(A\rtimes_r {\mathcal G})\rightarrow (JA)\rtimes_r {\mathcal G}$
defined as the classifying map of the diagram
\[
\begin{array}{ccccccccc}
& & & & & & {A}\rtimes_r {\mathcal G} \\
& & & & & & \| \\
0 & \rightarrow & (J{A})\rtimes_r {\mathcal G} & \rightarrow & (T{A})\rtimes_r {\mathcal G} & \rightarrow & {A}\rtimes_r {\mathcal G} & \rightarrow & 0 \\
\end{array}
\]

Viewing the suspension of a $\mathcal G$-$C^\ast$-algebra or
$C^\ast$-category as the tensor product with the $C^\ast$-algebra
$C_0 (0,1)$, there is an obvious $C^\ast$-functor $\beta \colon
(\Sigma^n B\otimes {\mathcal K}){\mathcal G}\rightarrow \Sigma^n
(B\rtimes_r {\mathcal G})\otimes {\mathcal K}$.

Let $\mathcal C$ be any $C^\ast$-category.  Then the tensor product
${\mathcal C}\otimes {\mathcal K}$ is a direct limit of
$C^\ast$-categories of matrices with elements the morphisms of the
category $\mathcal C$.  It follows that we have a natural
homomorphism $\delta \colon {\mathcal C}\otimes {\mathcal
K}\rightarrow {\mathcal C}^H$.

Let $\alpha \colon J^{2n}A\rightarrow \Sigma^n B\otimes {\mathcal
K}$ be an equivariant map.  Then by the above proposition we have a
functorially induced homomorphism $\alpha_\ast \colon
(J^{2n}A)\rtimes_r {\mathcal G} \rightarrow (\Sigma^n B\otimes
{\mathcal K})\rtimes_r {\mathcal G}$.  We can define a map $D\colon
\KK_{\mathcal G} (A,B)\rightarrow \KK (A\rtimes_r {\mathcal G},
B\rtimes_r {\mathcal G})$ by writing $D (\alpha ) = \delta \circ
\beta \circ \alpha_\ast \circ \gamma^n$.  The relevant naturality
properties are easy to check.

The argument for the result when considering the maximal crossed
product, $\rtimes_\mathrm{max}$, is identical to the above.
\end{proof}

\begin{corollary}
Let $\mathcal G$ be a discrete groupoid, and let $A$ and $B$ be
$\mathcal G$-$C^\ast$-algebras.  Then the spectra $\KK (A\rtimes_r
{\mathcal G}, B\rtimes_r {\mathcal G})$ and $\KK (A\rtimes {\mathcal
G},B\rtimes {\mathcal G})$ are symmetric $\KK_{\mathcal G}(\F ,\F
)$-module spectra. \noproof
\end{corollary}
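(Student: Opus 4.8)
The plan is to realise $\KK(A\rtimes_r{\mathcal G},B\rtimes_r{\mathcal G})$ as a $\KK_{\mathcal G}(\F,\F)$-module by first manufacturing a map of symmetric ring spectra from $\KK_{\mathcal G}(\F,\F)$ into the endomorphism ring spectrum of the source object, and then letting it act through the Kasparov product of Theorem \ref{KKprod}. Throughout I write $M=\KK(A\rtimes_r{\mathcal G},B\rtimes_r{\mathcal G})$ and $E=\KK(A\rtimes_r{\mathcal G},A\rtimes_r{\mathcal G})$ for brevity; the full crossed product is treated identically, replacing $D_r$ by $D$.

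First I would build the ring map. Regarding $\F$ as a $\mathcal G$-$C^\ast$-algebra with trivial action, we have a natural isomorphism $\F\otimes A\cong A$ of $\mathcal G$-$C^\ast$-algebras, so the equivariant analogue of the map $\Delta$ of Proposition \ref{dmap} (the same construction used to make $\KK_{\mathcal G}(A,B)$ a $\KK_{\mathcal G}(\F,\F)$-module) supplies a map $\Delta_{\mathcal G}\colon \KK_{\mathcal G}(\F,\F)\rightarrow \KK_{\mathcal G}(A,A)$. Composing with reduced descent from Theorem \ref{descente} gives
\[
\Phi\colon \KK_{\mathcal G}(\F,\F)\stackrel{\Delta_{\mathcal G}}{\longrightarrow}\KK_{\mathcal G}(A,A)\stackrel{D_r}{\longrightarrow}E.
\]
I would then define the module multiplication to be the composite
\[
\KK_{\mathcal G}(\F,\F)\wedge M\stackrel{\Phi\wedge 1}{\longrightarrow}E\wedge M\stackrel{\mu}{\longrightarrow}M,
\]
where $\mu$ is the product of Theorem \ref{KKprod}.

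The heart of the matter is to check that $\Phi$ is multiplicative. This reduces to two compatibility statements already available. Specialising the diagram of the equivariant version of Proposition \ref{dmap} to the case in which all the algebras are $\F$ and the tensor factor is $A$ shows that $\Delta_{\mathcal G}$ intertwines the products; and specialising the first commutative diagram of Theorem \ref{descente} to the case $A=B=C$ shows that $D_r$ is multiplicative on $\KK_{\mathcal G}(A,A)$. Combining these, $\Phi\circ\mu_R=\mu\circ(\Phi\wedge\Phi)$, where $\mu_R$ denotes the ring multiplication of $\KK_{\mathcal G}(\F,\F)$.

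Finally I would verify the module axiom. Writing $\mu'=\mu\circ(\Phi\wedge 1)$ for the action above, the associativity square defining an $R$-module spectrum, after substituting $\mu'$ and using $\Phi\circ\mu_R=\mu\circ(\Phi\wedge\Phi)$, collapses to the identity $\mu\circ(\mu\wedge 1)=\mu\circ(1\wedge\mu)$; this is precisely the associativity of the Kasparov product proved in Theorem \ref{KKprod}. Running the identical argument with the full descent map $D$ yields the module structure on $\KK(A\rtimes{\mathcal G},B\rtimes{\mathcal G})$. The one genuinely delicate point is the multiplicativity of $\Phi$ as a map of ring spectra; everything else is a formal consequence of the associativity of $\mu$ and the two compatibility diagrams.
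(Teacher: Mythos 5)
Your proposal is correct and follows essentially the route the paper intends: the corollary is stated without proof precisely because it is the composite of the equivariant module structure map $\Delta_{\mathcal G}\colon \KK_{\mathcal G}(\F ,\F )\rightarrow \KK_{\mathcal G}(A,A)$ with the descent map $D_r$ (or $D$) of Theorem \ref{descente}, acting through the product of Theorem \ref{KKprod}, with the module axiom supplied by the product-compatibility of descent and the associativity of the product. Your explicit verification of the multiplicativity of $\Phi$ and the collapse of the module square to associativity is exactly the bookkeeping the paper suppresses.
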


\section{Comparison with $C^\ast$-algebra $K$-theory}

The spectra defined in this article are based on the spaces used to
construct $KK$-theory groups in the articles \cite{Cu3,Cu4,Cu5}. The
fact that our spectra can be used to define the usual Kasparov
$KK$-theory for $C^\ast$-algebras is therefore no surprise.  To be
specific, the following result holds.

\begin{theorem}
Let $A$ and $B$ be $C^\ast$-algebras.  Then the stable homotopy
group $\pi_n \KK (A,B)$ is naturally isomorphic to the group
$KK^{-n}(A,B)$.  If $C$ is another $C^\ast$-algebra, the smash
product of spectra
\[
\KK (A,B)\wedge \KK (B,C) \rightarrow \KK (A,C)
\]
induces the Kasparov product.
\end{theorem}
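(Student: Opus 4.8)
The plan is to compute the stable homotopy groups of $\KK(A,B)$ directly from the definition and to match the result with Cuntz's homotopy-theoretic description of $KK$-theory from \cite{Cu4,Cu5}. First I would unwind the spaces of the spectrum. For a $C^\ast$-algebra $B$, proposition \ref{AHCA} gives $B^H\cong B\otimes {\mathcal K}$, and proposition \ref{Hcom} gives $(\Sigma^n B)^H\cong \Sigma^n(B\otimes {\mathcal K})$, so the $m$-th space is the space $F(J^{2m}A,\Sigma^m B)=\Hom(J^{2m}A,\Sigma^m B\otimes {\mathcal K})$ of $\ast$-homomorphisms, with the compact-open topology and basepoint the zero homomorphism. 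The elementary input is the exponential law: a based map $S^j\to \Hom(X,W)$ is the same datum as a $\ast$-homomorphism $X\to C_0(\mathbb R^j)\otimes W=\Sigma^j W$, and based homotopies correspond to homotopies of $\ast$-homomorphisms. Thus $\pi_j\Hom(X,W)$ is the group $[X,\Sigma^j W]$ of homotopy classes of $\ast$-homomorphisms, and taking $X=J^{2m}A$, $W=\Sigma^m B\otimes {\mathcal K}$, $j=k+m$ gives
\[
\pi_{k+m}F(J^{2m}A,\Sigma^m B)=[J^{2m}A,\Sigma^{k+2m}B\otimes {\mathcal K}].
\]
Passing to the colimit over the structure maps yields
\[
\pi_k\KK(A,B)=\colim_m\,[J^{2m}A,\Sigma^{k+2m}B\otimes {\mathcal K}],
\]
where the transition maps are induced by applying the classifying map construction $\eta$ twice.

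Next I would invoke the fundamental theorem of Cuntz's approach, which identifies $KK(A,B)$ with the group $[JA,\Sigma B\otimes {\mathcal K}]$ of homotopy classes of $\ast$-homomorphisms, and, crucially, shows that the classifying map $\eta$ of the cone extension $0\to \Sigma D\to CD\to D\to 0$ induces the suspension isomorphism $[J^\ell A,\Sigma^s D]\cong [J^{\ell+1}A,\Sigma^{s+1}D]$. Iterating this gives $[J^\ell A,\Sigma^\ell B\otimes {\mathcal K}]\cong KK(A,B)$. Consequently the transition maps in the colimit above are eventually bijections, so the colimit equals $[J^2A,\Sigma^{k+2}B\otimes {\mathcal K}]\cong KK(A,\Sigma^k B)$. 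A final appeal to Bott periodicity matches the suspension grading with the internal grading of $KK$-theory, identifies $KK(A,\Sigma^k B)$ with $KK^{-k}(A,B)$, and makes sense of negative $k$; in the complex case the $2$-periodicity makes the sign of the grading immaterial. Naturality in $A$ and $B$ is inherited from the functoriality of $J$, of the classifying map construction, and of Cuntz's isomorphism.

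For the product statement I would compare the smash product map of theorem \ref{KKprod} with the Kasparov product. By definition \ref{Shprod} the pairing induced on homotopy classes is the $\sharp$-product, given by composing classifying maps, and by proposition \ref{pco} it restricts to ordinary composition on honest $\ast$-homomorphisms. In Cuntz's picture the Kasparov product is realised in exactly this way, as the composition of the classifying maps representing the two factors; hence, under the identification of the previous paragraph, the smash product induces the Kasparov product. The associativity established in theorem \ref{KKprod} then corresponds to associativity of the Kasparov product.

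I expect the main obstacle to be the second paragraph: verifying that the structure map $\eta$ of the spectrum coincides with the suspension isomorphism of Cuntz's theory, so that the colimit stabilises and the suspension grading is correctly matched, via Bott periodicity, with the grading of $KK$-theory. Once this dictionary among $\eta$, suspension, and Bott periodicity is in place, both the group isomorphism and the product comparison follow with only routine bookkeeping.
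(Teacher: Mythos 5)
Your proposal follows essentially the same route as the paper: unwind the spaces of the spectrum via Proposition \ref{AHCA}, identify the stable homotopy groups with the direct limit of homotopy classes of $\ast$-homomorphisms $[J^{2m}A,\Sigma^{k+2m}B\otimes{\mathcal K}]$, and then appeal to Cuntz's identification of this colimit with $KK$-theory (the paper cites Proposition 3.1 of \cite{Cu5} for exactly this step). The extra detail you supply --- the exponential law, the stabilisation of the transition maps, the Bott periodicity bookkeeping, and the comparison of the $\sharp$-product with the Kasparov product --- is precisely what the paper delegates to that citation, so the two arguments coincide in substance.
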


\begin{proof}
By proposition \ref{AHCA}, the $k$-th space of the spectrum $\KK
(A,B)$ is the spaces of all $\ast$-homomorphisms $J^{2k}
A\rightarrow \Sigma^k B\otimes {\mathcal K}$.  Let $[C,D]$ denote
the set of homomotopy classes of $\ast$-homomorphisms between
$C^\ast$-algebras $C$ and $D$.  Then for all $p\in {\mathbb N}$,
$\pi_p [C,D] = [C,\Sigma^pD]$, and the stable homotopy group $\pi_n
\KK (A,B)$ is thus the direct limit
\[
\lim_{k\rightarrow \infty}[J^{2n+2k} A,\Sigma^{n+k}B\otimes {\mathcal K}]
\]

The result now follows by proposition 3.1 in \cite{Cu5}.
\end{proof}

A similar result also holds in the equivariant case, and also
follows from the arguments of \cite{Cu5}.   To be precise, we have
the following.

\begin{theorem} \label{KKid}
Let $G$ be a discrete group, and let $A$ and $B$ be
$G$-$C^\ast$-algebras.  Then the stable homotopy group $\pi_n \KK_G
(A,B)$ is naturally isomorphic to the group $KK_G^{-n}(A,B)$.  If
$C$ is another $G$-$C^\ast$-algebra, the smash product of spectra
\[
\KK_G (A,B)\wedge \KK_G (B,C) \rightarrow \KK_G (A,C)
\]
induces the Kasparov product.
\noproof
\end{theorem}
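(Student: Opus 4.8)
The plan is to follow the proof of the preceding non-equivariant theorem essentially verbatim, replacing $\ast$-homomorphisms by equivariant maps throughout. By definition, the $k$-th space of the spectrum $\KK_G(A,B)$ is $F_G(J^{2k}A,\Sigma^k B)$, which by the definition of $F_G$ is precisely the space of all equivariant $\ast$-homomorphisms $J^{2k}A \rightarrow \Sigma^k B \otimes {\mathcal K}$. Writing $[C,D]_G$ for the set of equivariant homotopy classes of equivariant $\ast$-homomorphisms $C\rightarrow D$, we have $\pi_p F_G(C,D) = [C,\Sigma^p D]_G$, so that the stable homotopy group $\pi_n \KK_G(A,B)$ is identified with the direct limit
\[
\lim_{k\rightarrow \infty}[J^{2n+2k}A,\Sigma^{n+k}B\otimes {\mathcal K}]_G
\]
taken over the structure maps of the spectrum. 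Naturality of the resulting isomorphism in $A$ and $B$ will follow from the functoriality of all the constructions involved.

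The next step is to invoke the equivariant form of proposition 3.1 of \cite{Cu5}. Cuntz's description of $KK$-theory as homotopy classes out of iterated tensor algebras rests only on the universal property of the tensor algebra, the semi-split classifying-map construction, stability under tensoring with ${\mathcal K}$, and Bott periodicity; each of these is available in the equivariant setting. Indeed, the equivariant tensor algebra $TA$, the $\mathcal G$-$C^\ast$-algebra $JA$, and the attendant classifying maps were constructed above precisely so as to reproduce Cuntz's setup in the presence of a $G$-action, and equivariant $KK$-theory enjoys the same homotopy invariance, stability, and Bott periodicity as the non-equivariant theory. Hence the arguments of \cite{Cu5}, applied equivariantly, yield a natural isomorphism between the above direct limit and $KK_G^{-n}(A,B)$.

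The main obstacle is the assertion about the product. Here one must check two things. First, that the map of spectra of Theorem \ref{KKprod2}, which on each space is given by the composition $\alpha \sharp \beta$ of classifying maps, induces on stable homotopy groups exactly the composition product in Cuntz's picture; this is a diagram chase tracking the bookkeeping of the suspension coordinates $\Sigma^m$ and the iterated tensor algebras $J^{2n}$, together with the interchange map $s$ of Proposition \ref{sdef}, and the compatibility with structure maps already recorded in Theorem \ref{KKprod2} ensures that the induced pairing on the direct limits is well-defined. Second, that this composition product is carried to the Kasparov product by the isomorphism of the previous step, which is again part of the content of \cite{Cu5} in its equivariant form. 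Combining these identifications shows that the smash product of spectra induces the Kasparov product, completing the argument.
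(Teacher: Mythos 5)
Your proposal matches the paper's treatment: the paper gives no separate proof of this theorem, stating only that it ``follows from the arguments of \cite{Cu5}'' exactly as in the preceding non-equivariant theorem, whose proof identifies $\pi_n$ of the spectrum with the direct limit $\lim_k [J^{2n+2k}A,\Sigma^{n+k}B\otimes {\mathcal K}]$ and then invokes proposition 3.1 of \cite{Cu5}. You carry out precisely this transposition to the equivariant setting (and in fact supply more detail about why Cuntz's argument equivariantizes and why the spectrum-level product matches the Kasparov product than the paper does), so the approach is essentially the same.
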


In order to apply the above two theorems to spectra for
$C^\ast$-categories and groupoid $C^\ast$-algebras, we need further
comparison results.

\begin{definition}
Let $\mathcal A$ and $\mathcal B$ be unital $C^\ast$-categories. Two
$\ast$-functors $\alpha ,\beta \colon {\mathcal A}\rightarrow
{\mathcal B}$ are termed {\em equivalent} if there are elements $u_a
\in \Hom (\alpha (a),\beta (a))_{\mathcal B}$ for each object $a\in
\Ob ({\mathcal A})$ such that:

\begin{itemize}

\item $u_a^\ast u_a = 1_{\alpha (a)}$ and $u_a u_a^\ast = 1_{\beta (a)}$ for all objects $a\in \Ob ({\mathcal A})$.

\item Let $x\in \Hom (a,b)_{\mathcal A}$.  Then $\beta (x)u_a = u_b \alpha (x)$.
\end{itemize}

Two unital $C^\ast$-categories $\mathcal A$ and $\mathcal B$ are
called equivalent if there is are $\ast$-functors $\alpha \colon
{\mathcal A}\rightarrow {\mathcal B}$ and $\beta \colon {\mathcal
A}\rightarrow {\mathcal B}$ such that the compositions $\alpha \circ
\beta$ and $\beta \circ \alpha$ are equivalent to identity
$\ast$-functors.
\end{definition}

\begin{lemma} \label{h-lemma}
Let $\alpha , \beta \colon {\mathcal A}\rightarrow {\mathcal B}$ be
equivalent $\ast$-functors.  Then $\alpha$ and $\beta$ lie in the
same path-component of the space $F({\mathcal A},{\mathcal B})$.
\end{lemma}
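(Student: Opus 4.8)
The plan is to produce an explicit path of $\ast$-functors joining $\alpha$ and $\beta$ inside $F(\mathcal A,\mathcal B)$. First recall that a point of $F(\mathcal A,\mathcal B)$ is a $\ast$-functor into the $C^\ast$-algebra $\mathcal B^H$, so I regard $\alpha$ and $\beta$ as the composites $H\alpha,H\beta\colon\mathcal A\to\mathcal B^H$. Writing $p_a=H\alpha(1_a)$ and $q_a=H\beta(1_a)$ for the projections attached to each object $a\in\Ob(\mathcal A)$, and $U_a=H(u_a)\in\mathcal B^H$, the equivalence hypotheses become $U_a^\ast U_a=p_a$, $U_aU_a^\ast=q_a$ and the intertwining relation $H\beta(x)U_a=U_bH\alpha(x)$ for $x\in\Hom(a,b)_{\mathcal A}$. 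The idea is to choose, for each object $a$, a norm-continuous path of partial isometries $W_a\colon[0,1]\to\mathcal B^H$ with constant source projection $W_a(t)^\ast W_a(t)=p_a$, with $W_a(0)=p_a$ and $W_a(1)=U_a$, and then to set $\gamma_t(x)=W_b(t)\,H\alpha(x)\,W_a(t)^\ast$ for $x\in\Hom(a,b)_{\mathcal A}$.

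Granting such paths, the verification that $t\mapsto\gamma_t$ is the required path is routine and I would carry it out next. Each $\gamma_t$ is linear and $\ast$-preserving because $W_a$ is a partial isometry, and it is functorial because the constant-source condition gives $W_b(t)^\ast W_b(t)=p_b=H\alpha(1_b)$, which acts as the identity on the image of $H\alpha$; thus $\gamma_t(x)\gamma_t(y)=W_c(t)H\alpha(x)p_bH\alpha(y)W_a(t)^\ast=\gamma_t(xy)$. At the endpoints, $\gamma_0(x)=p_bH\alpha(x)p_a=H\alpha(x)$, while the intertwining relation together with $U_aU_a^\ast=q_a=H\beta(1_a)$ gives $\gamma_1(x)=U_bH\alpha(x)U_a^\ast=H\beta(x)$. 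Continuity in the compact--open topology follows from the estimate $\|\gamma_t(x)-\gamma_{t'}(x)\|\le\|x\|\big(\|W_b(t)-W_b(t')\|+\|W_a(t)-W_a(t')\|\big)$, which is uniform on bounded, hence compact, sets of morphisms.

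The real content is the construction of the paths $W_a$, and this is exactly where stability enters. By Proposition \ref{AHCA} and Corollary \ref{AHCAC} the $C^\ast$-algebra $\mathcal B^H$ is stable, so its multiplier algebra is properly infinite; in particular $1-p_a\sim 1\sim 1-q_a$, so each partial isometry $U_a$ extends to a genuine unitary $\tilde U_a$ in the multiplier algebra with $\tilde U_a p_a=U_a$. Because the unitary group of the multiplier algebra of a stable $C^\ast$-algebra is connected, I may choose a norm-continuous path of unitaries $\tilde U_a(t)$ from $1$ to $\tilde U_a$, and then put $W_a(t)=\tilde U_a(t)\,p_a$, which automatically lands in $\mathcal B^H$, has constant source $p_a$, and has the correct endpoints. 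Alternatively, and more robustly should $\mathcal B^H$ fail to be $\sigma$-unital, one can first use a spare orthogonal copy supplied by stability to homotope $U_a$ to a partial isometry whose range is orthogonal to $p_a$, where the explicit rotation $t\mapsto\cos(\pi t/2)\,p_a+\sin(\pi t/2)\,U_a$ already has constant source.

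I expect the construction of the $W_a$ to be the main obstacle: it amounts to realising the Murray--von Neumann equivalence $p_a\sim q_a$ by a path of partial isometries of fixed source, which can fail for a general unital $\mathcal B$ on account of $K_1$-type (homotopy-of-projections) obstructions. The whole point is that passing to $\mathcal B^H$, i.e.\ stabilising by the compact operators, kills these obstructions, so the rotation and connectedness arguments go through. Everything in sight is natural in the equivalence data $\{u_a\}$, and since each morphism space involves only finitely many objects, no coherence between the separate object-wise paths is needed for continuity of $t\mapsto\gamma_t$.
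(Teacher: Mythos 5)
Your reduction is exactly the paper's: writing the path as $\gamma_t(x)=W_b(t)\,H\alpha(x)\,W_a(t)^\ast$ for object-wise paths of partial isometries with constant source $H\alpha(1_a)$ is precisely what the paper's formula $F_t(x)=r_b(t)\,\mathrm{diag}(\alpha(x),0)\,r_a(t)^\ast$ amounts to, with $W_a(t)=r_a(t)\,\mathrm{diag}(1_{\alpha(a)},0)$, and your checks of multiplicativity, the endpoints, and compact--open continuity are the right ones. The difference is in how the paths $W_a$ are produced. The paper never goes near the multiplier algebra of ${\mathcal B}^H$: it first replaces $\alpha$ and $\beta$ by the $\ast$-functors $\alpha',\beta'$ into ${\mathcal B}_\oplus$ landing in the two complementary corners of $\Hom (\alpha (a)\oplus \beta (a),\alpha (b)\oplus \beta (b))$ (these represent the same points of $F({\mathcal A},{\mathcal B})$), so that source and range projections are orthogonal by construction, and then the elementary $2\times 2$ rotation $r_a(t)$ with entries $\cos t$, $u_a^\ast \sin t$, $-u_a\sin t$, $\cos t$ does all the work. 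Your primary route (extend $U_a$ to a unitary of $M({\mathcal B}^H)$ and use connectedness of that unitary group) buys nothing here and costs you a Kuiper-type theorem whose standard form requires $\sigma$-unitality; since ${\mathcal B}^H$ need not be $\sigma$-unital for a general small $C^\ast$-category (uncountably many objects, non-separable morphism spaces), that route as stated has a genuine hypothesis gap -- which you acknowledge, but you leave the fallback as a sketch. The fallback is the real proof: the ``spare orthogonal copy'' is exactly the second summand of $\alpha(a)\oplus\beta(a)$ in the additive completion, and once you arrange orthogonality of source and range from the outset (rather than homotoping $U_a$ after the fact), the single rotation suffices and the argument becomes elementary, self-contained, and free of countability hypotheses. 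Your closing remark that stabilisation kills the $K_1$-type obstruction to joining $p_a$ to $U_a$ by a constant-source path is correct and is indeed why the target is ${\mathcal B}^H$ rather than ${\mathcal B}$; I would simply invert your emphasis and make the $2\times 2$ rotation the proof rather than the afterthought.
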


\begin{proof}
In the space $F({\mathcal A},{\mathcal B})$, the $\ast$-functors
$\alpha$ and $\beta$ are the same as the $\ast$-functors $\alpha'
\colon {\mathcal A}\rightarrow {\mathcal B}_\oplus$ and $\beta'
\colon {\mathcal A}\rightarrow {\mathcal B}_\oplus$ defined by
writing
\[
\alpha'(a) = \alpha (a) \oplus \beta (a),\ \beta'(a) = \alpha (a) \oplus \beta (a) \qquad a\in \Ob ({\mathcal A})
\]
and
\[
\alpha' (x) = \left( \begin{array}{cc}
\alpha (x) & 0 \\
0 & 0 \\
\end{array} \right) \qquad
\beta' (x) = \left( \begin{array}{cc}
0 & 0 \\
0 & \beta (x) \\
\end{array} \right)
\]
where $x\in \Hom (a,b)_{\mathcal A}$.

Since the $\ast$-functors $\alpha$ and $\beta$ are equivalent, we
can find morphisms $u_a \in \Hom (\alpha (a),\beta (a))_{\mathcal
B}$ for each object $a\in \Ob ({\mathcal A})$ such that $u_a^\ast
u_a = 1_{\alpha (a)}$, $u_a u_a^\ast = 1_{\beta (a)}$, and $\beta
(x)u_a = u_b \alpha (x)$ for all $x\in \Hom (a,b)_{\mathcal A}$.

Let $t\in [0, \pi /2]$.  Define
\[
r_a (t) = \left( \begin{array}{cc}
\cos \theta & u_a^\ast \sin \theta \\
-u_a\sin \theta & \cos \theta \\
\end{array} \right) \in \Hom (\alpha (a)\oplus \beta (a))_{\mathcal B} \qquad t\in [0,\frac{\pi}{2}]
\]

Then we have a path of $\ast$-functors, $F_t\colon {\mathcal A}\rightarrow {\mathcal B}_\oplus$, from $\alpha'$ to $\beta'$, defined by the formula
\[
F_t (a) = \alpha (a)\oplus \beta (a) \qquad F_t (x) = r_b(t) \left( \begin{array}{cc}
\alpha (x) & 0 \\
0 & 0 \\
\end{array} \right) r_a(t)^\ast
\]
where $x\in \Hom (a,b)_{\mathcal A}$.
\end{proof}

\begin{theorem}
Let $\mathcal A$ and ${\mathcal A}'$ be equivalent
$C^\ast$-categories.  Let $\mathcal B$ be another $C^\ast$-category.
Then the spectra $\KK ({\mathcal A},{\mathcal B})$ and $\KK
({\mathcal A}',{\mathcal B})$ are homotopy-equivalent, and the
spectra $\KK ({\mathcal B},{\mathcal A})$ and $\KK ({\mathcal
B},{\mathcal A}')$ are homotopy-equivalent.
\end{theorem}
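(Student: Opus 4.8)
The plan is to use the Kasparov product of Theorem \ref{KKprod} to convert the equivalence data into a pair of mutually homotopy-inverse maps of spectra. The two asserted equivalences are handled symmetrically: I spell out $\KK(\mathcal A,\mathcal B)\simeq\KK(\mathcal A',\mathcal B)$ in detail and indicate the dual modification for $\KK(\mathcal B,\mathcal A)\simeq\KK(\mathcal B,\mathcal A')$ at the end. By hypothesis the equivalence is witnessed by $\ast$-functors $\alpha\colon\mathcal A\to\mathcal A'$ and $\beta\colon\mathcal A'\to\mathcal A$ for which $\beta\circ\alpha$ is equivalent to $1_{\mathcal A}$ and $\alpha\circ\beta$ is equivalent to $1_{\mathcal A'}$.

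First I record the functoriality supplied by the product. Each $\ast$-functor $\alpha$ determines a point $[\alpha]\in F(\mathcal A,\mathcal A')$, the degree-zero space of $\KK(\mathcal A,\mathcal A')$; pairing this point into the first variable of the product gives a map of spectra
\[
\alpha^{*}:=[\alpha]\sharp(-)\colon \KK(\mathcal A',\mathcal B)\to\KK(\mathcal A,\mathcal B),
\]
and likewise $\beta^{*}:=[\beta]\sharp(-)\colon\KK(\mathcal A,\mathcal B)\to\KK(\mathcal A',\mathcal B)$. Using associativity of the product (Theorem \ref{KKprod}) together with Proposition \ref{pco} (which identifies $[\gamma]\sharp[\delta]$ with $\delta\circ\gamma$ on $\ast$-functors), I compute the two composites as
\[
\alpha^{*}\circ\beta^{*}=[\beta\circ\alpha]\sharp(-),\qquad \beta^{*}\circ\alpha^{*}=[\alpha\circ\beta]\sharp(-).
\]

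The heart of the argument is to show that $[\beta\circ\alpha]\sharp(-)$ is homotopic to the identity of $\KK(\mathcal A,\mathcal B)$, and symmetrically for $[\alpha\circ\beta]$. Since $\beta\circ\alpha$ is equivalent to $1_{\mathcal A}$, Lemma \ref{h-lemma} supplies a path $t\mapsto p_{t}$ in $F(\mathcal A,\mathcal A)$ from $[\beta\circ\alpha]$ to $[1_{\mathcal A}]$. The product of Definition \ref{Shprod} is, at each level $n$, a continuous pairing
\[
F(\mathcal A,\mathcal A)\times F(J^{2n}\mathcal A,\Sigma^{n}\mathcal B)\to F(J^{2n}\mathcal A,\Sigma^{n}\mathcal B)
\]
assembled from the continuous operations $J^{2n}(-)$, $s$, and $(-)^{H}$; feeding in the path $p_{t}$ therefore produces a levelwise homotopy $(x,t)\mapsto p_{t}\sharp x$. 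The same naturality of the classifying-map construction that established compatibility with the structure maps in the proof of Theorem \ref{KKprod}, carried along with the extra parameter $t$, shows that this levelwise homotopy is in fact a homotopy of maps of spectra. At the endpoint the map $[1_{\mathcal A}]\sharp(-)$ is the identity of $\KK(\mathcal A,\mathcal B)$, which is exactly the unit property underlying the ring-spectrum structure of Theorem \ref{RS} and follows from Proposition \ref{pco}. Hence $\alpha^{*}\circ\beta^{*}\simeq\mathrm{id}$, and the identical argument applied to $\alpha\circ\beta\simeq 1_{\mathcal A'}$ gives $\beta^{*}\circ\alpha^{*}\simeq\mathrm{id}$, so $\alpha^{*}$ is a homotopy equivalence with inverse $\beta^{*}$.

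For the second equivalence I pair $[\alpha]$ and $[\beta]$ into the opposite variable, setting $\alpha_{*}:=(-)\sharp[\alpha]\colon\KK(\mathcal B,\mathcal A)\to\KK(\mathcal B,\mathcal A')$ and $\beta_{*}:=(-)\sharp[\beta]$; associativity and Proposition \ref{pco} again yield composites $(-)\sharp[\beta\circ\alpha]$ and $(-)\sharp[\alpha\circ\beta]$, and the identical path argument shows these are homotopic to the respective identities. I expect the only genuinely delicate point to be this middle step: verifying that the path produced by Lemma \ref{h-lemma}, which lives in the degree-zero mapping space of $\ast$-functors into the additive completion, induces a bona fide homotopy of \emph{maps of spectra} (not merely a levelwise homotopy), and that its endpoint really acts as the identity map. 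Everything else is bookkeeping with the product and its associativity established earlier.
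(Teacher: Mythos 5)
Your proposal is correct and follows essentially the same route as the paper: both use Lemma \ref{h-lemma} to produce a path in $F({\mathcal A},{\mathcal A})$ from $\beta\circ\alpha$ to the identity, then push that path through the product $\sharp$ of Theorem \ref{KKprod} (with Proposition \ref{pco} identifying the endpoints) to exhibit $[\alpha]\sharp(-)$ and $[\beta]\sharp(-)$ as mutually inverse homotopy equivalences, handling the second variable symmetrically. Your write-up is in fact somewhat more explicit than the paper's about the associativity bookkeeping and about why the levelwise homotopy is a homotopy of maps of spectra.
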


\begin{proof}
Since the $C^\ast$-categories $\mathcal A$ and ${\mathcal A}'$ are
equivalent, by the above lemma we can find elements $\alpha \in
F({\mathcal A},{\mathcal A}')$ and $\beta \in F({\mathcal
A}',{\mathcal A})$ along with a path $\gamma_t \in F({\mathcal
A},{\mathcal A})$ such that $\gamma_0 = \beta \circ \alpha$ and
$\gamma_1$ is the identity.

There are thus induced maps
\[
\alpha \sharp \colon \KK ({\mathcal A},{\mathcal B})\rightarrow \KK
({\mathcal A}',{\mathcal B})\qquad \beta \sharp \colon \KK
({\mathcal A}',{\mathcal B})\rightarrow \KK ({\mathcal A},{\mathcal
B})
\]
defined by the product with the elements $\alpha$ and $\beta$
respectively such that the map $\gamma_t \sharp \colon \KK
({\mathcal A},{\mathcal B})\rightarrow \KK ({\mathcal A},{\mathcal
B})$ is a homotopy between the composite $\alpha'\circ \alpha$ and
the identity map.

Similarly, the composite $\beta \circ \alpha$ is homotopic to the
identity map.  It follows that the spectra $\KK ({\mathcal
A},{\mathcal B})$ and $\KK ({\mathcal A}',{\mathcal B})$ are
homotopy-equivalent, and we have proved the first of the statements
in the theorem.

The proof of the second statement in the theorem is almost identical.
\end{proof}

The above result along with theorem \ref{KKid} can be used to prove
certain formal properties involving the $KK$-theory of
$C^\ast$-categories that are equivalent to $C^\ast$-algebras, which
covers most examples found in geometric applications.

\begin{theorem}
Let $\theta \colon {\mathcal G}\rightarrow {\mathcal H}$ be an
equivalence of discrete groupoids.  Let $A$ and $B$ be unital
$\mathcal H$-$C^\ast$-algebras.  Then the restriction map
$\theta^\ast \colon \KK_{\mathcal H}(A,B)\rightarrow \KK_{\mathcal
G}(A,B)$ is a homeomorphism of spectra.
\end{theorem}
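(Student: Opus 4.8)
The plan is to show that the restriction map $\theta^\ast$ is a homeomorphism on each space of the two symmetric spectra, compatibly with the structure maps; since by Proposition \ref{restriction} $\theta^\ast$ is already a well-defined map of spectra, this suffices. The $n$-th space of $\KK_{\mathcal H}(A,B)$ is $F_{\mathcal H}(J^{2n}A,\Sigma^n B)$, so the crux is the following claim: when $\theta$ is an equivalence, for any $\mathcal H$-$C^\ast$-algebras $C$ and $D$ restriction induces a homeomorphism from the space of $\mathcal H$-equivariant maps $C\to D\otimes {\mathcal K}$ onto the space of $\mathcal G$-equivariant maps between the restricted algebras. I will also need that the comparison $\ast$-functor $J^{2n}(\theta^\ast C)\to \theta^\ast(J^{2n}C)$ used to define $\theta^\ast$ becomes an isomorphism.

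First I would prove the claim about function spaces. An equivalence $\theta$ is fully faithful and essentially surjective, so I may fix for each $x\in \Ob({\mathcal H})$ an object $a_x\in \Ob({\mathcal G})$ and an isomorphism $h_x\colon \theta(a_x)\to x$ in $\mathcal H$, choosing $a_x=a$ and $h_x=1$ whenever $x=\theta(a)$. Injectivity is immediate from naturality: for an $\mathcal H$-equivariant $f$ the value $f_x$ is recovered from $f_{\theta(a_x)}$ by conjugation with $C(h_x)$ and $D(h_x)\otimes 1$, which are $\ast$-isomorphisms since every groupoid morphism is invertible. For surjectivity I would define, from a $\mathcal G$-equivariant family $(f_a)$, the extension $F_x=(D(h_x)\otimes 1)\,f_{a_x}\,C(h_x)^{-1}$. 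Naturality of $F$ with respect to an arbitrary $h\colon x\to y$ follows by using full faithfulness to write $h_y^{-1}hh_x=\theta(k)$ for a unique $\mathcal G$-morphism $k\colon a_x\to a_y$ and then invoking the $\mathcal G$-equivariance of $(f_a)$; consistency of $F$ on the image of $\theta$ is forced by the same argument applied to $\mathrm{id}_x$. Since restriction and this explicit extension are mutually inverse and both continuous for the compact-open topology, the claim follows. The identical argument applies to equivariant completely positive maps, because conjugation by the $\ast$-isomorphisms $C(h_x)$ preserves complete positivity.

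Next I would check that restriction commutes with the constructions building the spectrum. The suspension $\Sigma$ and the stabilisation $-\otimes {\mathcal K}$ are defined object-by-object and so commute with restriction on the nose. For the tensor algebra, and hence for $J$, I would use the completely positive form of the claim together with the observation that $\theta^\ast$ is an equivalence between the categories of $\mathcal G$- and $\mathcal H$-$C^\ast$-algebras: it is fully faithful by the claim and essentially surjective via a quasi-inverse $\psi\colon {\mathcal H}\to {\mathcal G}$, since $\theta^\ast(B'\circ\psi)\cong B'$ for any $\mathcal G$-$C^\ast$-algebra $B'$. An equivalence preserving equivariant completely positive maps carries the universal object $T{\mathcal A}$ to the universal object, so the two suprema defining the $C^\ast$-norms on $T_{\mathrm{alg}}$ coincide and the comparison map $J^{2n}(\theta^\ast A)\to \theta^\ast(J^{2n}A)$ is an isomorphism. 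Combining this with the claim identifies the $n$-th space of $\KK_{\mathcal H}(A,B)$ with that of $\KK_{\mathcal G}(A,B)$, and naturality of the classifying-map construction guarantees these identifications commute with the structure maps, so $\theta^\ast$ is a homeomorphism of spectra.

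The main obstacle I anticipate is precisely the identification $\theta^\ast(J^{2n}A)\cong J^{2n}(\theta^\ast A)$. The tensor-algebra norm is a supremum over equivariant completely positive maps into auxiliary algebras, and one must verify that this class of test maps is genuinely unchanged in passing from $\mathcal H$ to $\mathcal G$. This is where essential surjectivity is essential: one needs every $\mathcal G$-$C^\ast$-algebra target to arise, up to the equivalence, as a restriction via the quasi-inverse $\psi$, so that no seminorms are lost or gained. Everything else reduces to the naturality bookkeeping already assembled above.
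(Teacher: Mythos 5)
Your proposal is correct and takes essentially the same route as the paper: the paper packages your object-wise choices of $a_x$ and $h_x$ into a quasi-inverse functor $\phi\colon {\mathcal H}\rightarrow {\mathcal G}$ with natural isomorphisms $G$ and $H$, and recovers an $\mathcal H$-equivariant map from its restriction by the conjugation formula $\alpha (x) = \phi^\ast \theta^\ast \alpha (H(a)^{-1} x H(a))$, which is exactly your extension map, so that $H_\ast \circ \phi^\ast$ and $\phi^\ast \circ G_\ast^{-1}$ serve as the two-sided inverse. The one place you go beyond the paper is the explicit verification that $J^{2n}(\theta^\ast A)\cong \theta^\ast (J^{2n}A)$ via comparison of the two suprema defining the tensor-algebra norm; the paper leaves this compatibility implicit in the assertion that $H$ induces a homeomorphism of spectra, and your quasi-inverse argument is the right way to justify it.
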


\begin{proof}
Since the functor $\theta$ is an equivalence, there is a functor
$\phi \colon {\mathcal H}\rightarrow {\mathcal G}$ along with
natural isomorphisms $G\colon \phi \circ \theta \colon 1_{\mathcal
G}$ and $H\colon \theta \circ \phi \colon 1_{\mathcal H}$.

Thus, for each object $a\in \Ob ({\mathcal G})$, there is a morphism
$G_a \in \Hom (\phi \theta (a),a)_{\mathcal G}$.  Let $\alpha \colon
A\rightarrow B\otimes {\mathcal K}$ be an $\mathcal H$-equivariant
map.  Then the map $\alpha$ can be defined in terms of the
restriction $\phi^\ast \theta^\ast \alpha \colon A\rightarrow
B\otimes {\mathcal K}$ by the formula
\[
\alpha (x) = \phi^\ast \theta^\ast \alpha (H(a)^{-1} xH(a)) \qquad a\in \Ob ({\mathcal A})
\]

Thus the equivariant map $\alpha$ is determined by the restriction
$\phi^\ast \theta^\ast \alpha$.  The natural homomorphism $H$
therefore induces a homeomorphism of spectra $H_\ast \colon
\KK_{\mathcal H}(A,B)\rightarrow \KK_{\mathcal H}(A,B)$ such that
$H_\ast \circ \phi^\ast \circ \theta^\ast = 1_{\KK_{\mathcal
H}(A,B)}$.  There is similarly a homeomorphism $G_\ast \colon
\KK_{\mathcal G}(A,B)\rightarrow \KK_{\mathcal G}(A,B)$ such that
$G_\ast \circ \theta^\ast \circ \phi^\ast = 1_{\KK_{\mathcal
G}(A,B)}$.

It follows that the map $\theta^\ast$ is a homeomorphism, and we are done.
\end{proof}

\bibliographystyle{plain}

\end{document}